\newtheorem{theorem}{Theorem}[section]
\newtheorem{definition}[theorem]{Definition}
\newtheorem{lemma}[theorem]{Lemma}
\newtheorem{proposition}[theorem]{Proposition}
\newtheorem{remark}[theorem]{Remark}
\newenvironment{proof}[1][Proof]{\textbf{#1.} }{\ \rule{0.5em}{0.5em}}
\newcommand{\refeqn}[1]{(\ref{#1})}
\newcommand{\matr}[0]{\operatorname{Mat}}
\newcommand{\spann}[0]{\operatorname{span}}
\begin{document}

\title{A symmetry-adapted numerical scheme for SDEs}

\author{Francesco C. De Vecchi\thanks{Institute for Applied Mathematics and HCM, Rheinische Friedrich-Wilhelms-Universit\"at Bonn, Endenicher Allee 60, 53115 Bonn, Germany, \emph{email: francesco.devecchi@uni-bonn.de}}, Andrea  Romano\thanks{Dip. di Matematica, Universit\`a degli Studi di Milano, via Saldini 50, Milano, \emph{email: andrea.romano4@studenti.unimi.it}}
 and Stefania Ugolini\thanks{Dip. di Matematica, Universit\`a degli Studi di Milano, via Saldini 50, Milano, \emph{email: stefania.ugolini@unimi.it}}}
\date{}

\maketitle
\begin{abstract}
We propose a geometric numerical analysis of SDEs admitting Lie symmetries which allows us to individuate  a
symmetry adapted coordinates system where the given SDE has notable invariant properties. An approximation
scheme preserving the symmetry properties of the equation is introduced. Our algorithmic  procedure is
applied to the family of general linear SDEs 
for which two theoretical estimates of the numerical forward error are established.
\end{abstract}

\section{Introduction}

The exploitation of special geometric structures in numerical integration of both ordinary and partial differential equations
(ODEs and PDEs) is nowadays a mature subject of the numerical analysis often called \emph{geometric numerical integration}
(see e.g. \cite{Hairer2006,Iserles2009,Leimkuhler2004,Special_iusse}). The importance of this research topic is due to the
fact that many differential equations in mathematical applications have some particular geometrical features such as for
example a conservation law, a variational origin, an Hamiltonian or symplectic structure, a symmetry structure etc.. The
development of geometrically adapted numerical algorithms permits to obtain suitable integration methods which both preserve
the qualitative properties of the integrated equations and have a more efficient numerical behaviour with respect to the
corresponding standard discretization schemes. \\
In comparison the study of geometric numerical integration of stochastic differential equations (SDEs) is not so well developed.
In the current literature the principal aims consist in producing numerical stochastic integrators which are able to preserve the
symplectic structure (see e.g. \cite{Anton2015,Milstein2002,Tao2010}), some conserved quantities (see e.g. \cite{Cohen2016,Hong2011,Malham2008})
 or the variational structure (see e.g. \cite{Bou-Rabee2009,Bou-Rabee2010,Holm2016,Wang2009}) of the considered SDEs. For the study of the algebraic structure of stochastic expansions in order to achive an optimal efficient stochastic integrators at all
 orders see  \cite{Ebrahimi-Fard2012}.
\\
Although the exploitation of Lie symmetries of ODEs and PDEs (see e.g. \cite{Olver}) to obtain better numerical integrators
is an active research topic (see e.g. \cite{Winternitz2016,Dorodnitsyn2011,Levi2006,Levi2011} and references therein), the
application of the same techniques in the stochastic setting to the best of our knowledge  is not yet pursued, probably because
the concept of symmetry of a SDE has been quite recently developed (see e.g. \cite{DeVecchiT,DMU,DMU2,DeVecchi1,Gaeta1,Lazaro_Ortega,Lescot,Grigoriev}).\\
In this paper we introduce two different  numerical methods taking advantage of the presence of Lie symmetries in a given  SDE in order to provide a more efficient  numerical integration of it.\\
We first propose the definition of an invariant numerical integrator for a symmetric SDE as a natural generalization of the
corresponding concept for an ODE. When one tries to construct general invariant numerical methods in the stochastic framework,
in fact, a non trivial problem arises. Since both the SDE solution as well as the Brownian motion  driving it are continuous
but not differentiable processes, the  finite differences discretization could not converge to the SDE solution.  We give some necessary and sufficient conditions in order that the two standard numerical methods for SDEs (the Euler and the Milstein schemes) are also invariant numerical methods. By using this result, in particular, we are able to identify a class of privileged coordinates systems where it is convenient to make the discretization procedure. \\
Our second numerical method, based on a well-defined change of the coordinates system,  is inspired by the standard techniques
of reduction and reconstruction of a SDE with a solvable Lie algebra of symmetries (see \cite{DMU2,Kozlov2010}). Indeed a SDE
with a solvable Lie algebra of symmetries can be reduced to a triangular system and, when the number of symmetries is sufficiently
high, the latter can be explicitly integrated. In the stochastic setting the explicit integration concept is of course a quite
different notion with respect to the deterministic one. Indeed the evaluation of an Ito integral, a necessary step in the reconstruction
of a reduced SDE, can  only be numerically implemented.\\
We apply our two proposed numerical techniques to the general  linear SDEs, being the first non-trivial class of symmetric
equations. In this case the two algorithmic methods can be harmonized in such a way to produce the same simple family of best coordinates
systems for the discretization procedure. Interestingly, the identified coordinate changes are closely related to the explicit
solution formula of linear SDEs. Although the integration formula of linear SDEs is widely known, it is certainly original  the
recognition of the proposed numerical scheme  for  linear SDEs as a particular implementation of a general procedure for SDEs with Lie symmetries. We finally point out  that the simple case of affine drift and diffusion coefficients plays an important role  since any SDE with real analytic drift and diffusion coefficients can be seen locally like it.\\
Moreover we  theoretically investigate the numerical advantages of the new numerical scheme for linear SDEs. More precisely we obtain
two estimates for the forward numerical error which, in presence of an equilibrium distribution,  guarantee that the constructed method
is numerically stable for any size of the time step $h$. This means that for any $h>0$ the error does not grow exponentially with the
maximum-integration-time $T$, but it remains finite for $T \rightarrow + \infty$. This property is not shared by standard explicit or
implicit Euler and Milstein methods. The obtained estimates can be considered original results mainly because the coordinate changes
involved in the formulation of our numerical scheme are strongly not-Lipschitz, and so the standard convergence theorems  can not be
applied. Our theoretical results are also numerically illustrated.\\
It is interesting to note that the main part of the theory, in particular the definitions of strong symmetry of a SDE and of numerical scheme, can be easily extended to Stratonovich type SDE driven by general noises (\cite{DeVecchiT}), for example by exploiting rough paths theory. Unfortunately, since the proofs of Theorem 5.1 and Theorem 5.2 use in essential way the (forward and bachward) Ito formula, the long terms estimates obtained here cannot be straightforwardly generalized to the rough paths driven SDEs framework. At the same time we think that some ideas in the proof of Theorem 5.2 can be suitable exploited to obtain pathwise estimates of long term error in the rough paths setting.
The article is structured as follows: in Section \ref{section_preliminaries} we recall the notion of strong symmetry of a SDE and we
describe the two standard discretization schemes used in the rest of the paper. In Section \ref{section_symmetries_numerical} we propose
two numerical procedures adapted with respect to the Lie symmetries of a SDE.  We apply the  proposed integration methods  to general one and two-dimensional linear SDEs in Section \ref{section_linear} . In Section \ref{section_estimate} some theoretical estimates showing the stability
and efficiency of our adapted-to-symmetries numerical schemes in linear-SDEs are proved. In the last section we expose some numerical
experiments confirming the theoretical estimates obtained in the previous section.

\section{Preliminaries}\label{section_preliminaries}

\subsection{Strong symmetries of SDEs}

In the following $M$ will be an open subset of $\mathbb{R}^n$, and $x^1,...,x^n$ will be the standard coordinates system of $M$. If $F:M \rightarrow \mathbb{R}^k$ we denote by $\nabla(F)$ the Jacobian of $F$ i.e. the matrix-valued function
$$\nabla(F)=(\partial_{x^i}(F^j))|_{\stackrel{j=1,...,k}{i=1,...,n}}.$$
Furthermore we can identify the vector fields $Y \in TM$ with the functions $Y:M \rightarrow \mathbb{R}^n$, and if $\Phi_M \rightarrow M'$ is a diffeomorphism (with $M' \subset \mathbb{R}^n$) we introduce the pushforward
$$\Phi_*(Y)=(\nabla(\Phi) \cdot Y) \circ \Phi^{-1}.$$

\begin{definition}
Let $(\Omega,\mathcal{F},\mathcal{F}_t,\mathbb{P})$ be a filtered probability space. Let $\mu$ and $\sigma$ be two smooth functions defined on $M$ and valued in  n vectors and $n \times m$ matrices respectively. A  solution to a SDE($\mu,\sigma$) is a pair $(X,W)$ of adapted processes such that

i) W is a $\mathcal{F}_t$-Brownian motion in $\mathbb{R}^m$;

ii) For $i=1,2,...,n$
\begin{equation}\label{equation_SDE}
X^{i}_t=X^{i}_0+\int_0^t \mu^i(X_s)ds+ \int_0^t \sum_{\alpha=1}^m \sigma^i_{\alpha}(X_s)dW_s^{\alpha}+.
\end{equation}
\end{definition}
\begin{remark}
In particular all the integrals are meaningful if a.s.:
$$\int_0^t \sum_{i,\alpha} (\sigma^{i}_{\alpha})^2(X_s)ds   < +\infty, \quad \int_0^t \sum_{i} |\mu^i(X_s)|ds  < +\infty$$
\end{remark}
\begin{definition}
A solution $(X,W)$ to a SDE($\mu,\sigma$) on $(\Omega,\mathcal{F},\mathcal{F}_t,\mathbb{P})$ is said a strong solution if $X$ is adapted to the filtration $\mathcal{F}^W_t$ generated by the BM $W$ and completed with respect to $\mathbb{P}$.
\end{definition}
Of course a solution $(X,W)$ is called a weak solution when it is not strong.\\
In this paper we  fix a Brownian motion $W$, that is we consider only strong solutions of a  SDE($\mu,\sigma$) and, consequently, we denote them simply by $X$. For a symmetry analysis via weak solutions of SDEs see \cite{DMU}.\\
A solution $X$ to a SDE($\mu,\sigma$) is a diffusion process admitting as infinitesimal generator:
$$L=\sum_{\alpha=1}^m\sum_{i,j=1}^n \frac{1}{2}\sigma^i_{\alpha}\sigma^j_{\alpha}\partial_{x^ix^j}+\sum_{i=1}^n\mu^i\partial_{x^i}.$$
It is particularly useful for obtaining stochastic differentials the following  celebrated formula.
\begin{theorem}[Ito formula]
Let $X$ be a solution of  the SDE $(\mu,\sigma)$ and let $f:M \rightarrow
\mathbb{R}$ be a smooth function. Then  $F=f(X)$ has the following stochastic differential
$$dF_t=L(f)(X_t)dt+\nabla(f)(X_t) \cdot \sigma(X_t) \cdot dW_t.$$
\end{theorem}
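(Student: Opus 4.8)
The plan is to establish the formula by the classical argument: a second-order Taylor expansion along a refining sequence of partitions of $[0,t]$, combined with the characterization of the Ito and Lebesgue integrals as $L^2$-limits of left-point Riemann sums. \textbf{Step 1 (localization).} Introducing the stopping times $\tau_R=\inf\{s\ge 0 : X_s\notin K_R\}$ for an exhausting sequence of compact sets $K_R\subset M$, it suffices to prove the integrated identity for the stopped process $X^{\tau_R}$ and then let $R\to\infty$. On $[0,\tau_R]$ the process stays in the compact set $K_R$, so $f$, $\nabla(f)$ and the second derivatives $\partial_{x^ix^j}f$ are bounded and uniformly continuous there, and $\mu,\sigma$ are bounded; in particular all the stochastic integrals below are genuine square-integrable martingales.

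\textbf{Step 2 (telescoping and first-order terms).} Fix a partition $0=t_0<t_1<\dots<t_N=t$, write $\Delta_k X^i=X^i_{t_{k+1}}-X^i_{t_k}$ and $\Delta_k t=t_{k+1}-t_k$, and telescope $f(X_t)-f(X_0)=\sum_k\big(f(X_{t_{k+1}})-f(X_{t_k})\big)$. Taylor-expanding each increment to second order around $X_{t_k}$ gives
$$f(X_t)-f(X_0)=\sum_{i,k}\partial_{x^i}f(X_{t_k})\,\Delta_k X^i+\frac{1}{2}\sum_{i,j,k}\partial_{x^ix^j}f(X_{t_k})\,\Delta_k X^i\,\Delta_k X^j+\sum_k R_k,$$
with $R_k$ the Taylor remainder. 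As the mesh of the partition tends to $0$, the first sum converges in probability to $\int_0^t\nabla(f)(X_s)\cdot dX_s$, that is, to $\int_0^t\sum_i\mu^i(X_s)\partial_{x^i}f(X_s)\,ds+\int_0^t\nabla(f)(X_s)\cdot\sigma(X_s)\cdot dW_s$, directly from the definition of the stochastic integral as a limit of such left-point sums and the corresponding elementary statement for the Lebesgue integral.

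\textbf{Step 3 (the quadratic-variation term; main obstacle).} Substituting $\Delta_k X^i=\mu^i(X_{t_k})\Delta_k t+\sum_\alpha\sigma^i_\alpha(X_{t_k})\Delta_k W^\alpha$ up to a remainder of smaller order into the product $\Delta_k X^i\,\Delta_k X^j$, the terms of type $(\Delta_k t)^2$ and $\Delta_k t\,\Delta_k W^\alpha$ sum to $0$ in $L^2$, leaving the principal contribution $\sum_{i,j,k}\sum_{\alpha,\beta}\sigma^i_\alpha(X_{t_k})\sigma^j_\beta(X_{t_k})\partial_{x^ix^j}f(X_{t_k})\,\Delta_k W^\alpha\,\Delta_k W^\beta$. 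Here I would use the independence of Brownian increments and $\mathbb{E}[\Delta_k W^\alpha\Delta_k W^\beta\mid\mathcal{F}_{t_k}]=\delta_{\alpha\beta}\Delta_k t$ to replace $\Delta_k W^\alpha\Delta_k W^\beta$ by $\delta_{\alpha\beta}\Delta_k t$: an $L^2$ estimate on the variance of the error, exploiting the uniform continuity on $K_R$ of $x\mapsto\sigma^i_\alpha(x)\sigma^j_\alpha(x)\partial_{x^ix^j}f(x)$, shows that this sum converges in $L^2$ to $\int_0^t\sum_{i,j,\alpha}\sigma^i_\alpha(X_s)\sigma^j_\alpha(X_s)\partial_{x^ix^j}f(X_s)\,ds$. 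This is the step I expect to require the most care, since it is where the quadratic variation of Brownian motion enters and the estimates are least automatic.

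\textbf{Step 4 (remainder and conclusion).} The remainder obeys $|R_k|\le o(1)\,|\Delta_k X|^2$ with $o(1)$ uniform by uniform continuity of the second derivatives on $K_R$, while $\sum_k|\Delta_k X|^2$ is bounded in $L^1$ uniformly in the partition because $\mu,\sigma$ are bounded on $K_R$; hence $\sum_k R_k\to 0$ in probability. Collecting the $ds$-contributions of Steps 2 and 3 into $L(f)=\sum_i\mu^i\partial_{x^i}f+\frac{1}{2}\sum_{i,j,\alpha}\sigma^i_\alpha\sigma^j_\alpha\partial_{x^ix^j}f$ and retaining the martingale term yields the integrated form of the asserted identity for $X^{\tau_R}$; letting $R\to\infty$ and passing to differentials gives $dF_t=L(f)(X_t)\,dt+\nabla(f)(X_t)\cdot\sigma(X_t)\cdot dW_t$.
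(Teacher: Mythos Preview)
The paper does not actually prove this statement: the Ito formula is merely stated as a ``celebrated formula'' in the preliminaries, with no accompanying proof or reference. Your proposal, by contrast, supplies the standard textbook argument (localization by stopping times, second-order Taylor expansion along partitions, identification of the quadratic-variation term, control of the remainder), and the outline is correct in its essentials. So the two are not really comparable: the paper treats the result as background, while you give a genuine proof sketch where none was expected.
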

We recall  important definitions of symmetries of a SDE.
\begin{definition}[strong finite symmetry]
We say that a diffeomorphism $\Phi$ is a (strong finite) symmetry of the SDE $(\mu,\sigma)$ if for any solution $X$ to the SDE $(\mu,\sigma)$ also $\Phi(X)$ is a solution to the SDE $(\mu,\sigma)$.
\end{definition}
By using Ito formula it is immediate to prove the following result.
\begin{theorem}
A diffeomorphism $\Phi$ is a symmetry of the SDE $(\mu,\sigma)$ if and only if
\begin{eqnarray*}
L(\Phi)&=&\mu \circ \Phi\\
\nabla(\Phi) \cdot \sigma&=&\sigma \circ \Phi.
\end{eqnarray*}
\end{theorem}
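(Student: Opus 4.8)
The plan is to verify both directions by applying the Ito formula to the process $\Phi(X)$ and comparing the resulting semimartingale decomposition with the defining SDE $(\mu,\sigma)$. Since we work with a fixed Brownian motion $W$ and strong solutions, two continuous semimartingales driven by the same $W$ coincide in law (indeed pathwise, once the initial condition matches) exactly when their drift and diffusion coefficients agree; this uniqueness fact is what lets us pass between the pathwise statement ``$\Phi(X)$ solves the SDE'' and an identity between coefficient functions.

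First I would prove the ``if'' direction. Assume the two displayed equations $L(\Phi)=\mu\circ\Phi$ and $\nabla(\Phi)\cdot\sigma=\sigma\circ\Phi$ hold, and let $X$ be any solution of SDE$(\mu,\sigma)$. Apply the Ito formula componentwise to $F=\Phi(X)$ (that is, to each $f=\Phi^j$), obtaining
\[
d(\Phi(X_t)) = L(\Phi)(X_t)\,dt + \nabla(\Phi)(X_t)\cdot\sigma(X_t)\,dW_t.
\]
Substituting the two hypotheses, the right-hand side becomes $\mu(\Phi(X_t))\,dt + \sigma(\Phi(X_t))\,dW_t$, which is precisely the statement that $Y_t := \Phi(X_t)$ satisfies \eqref{equation_SDE} with the same $W$. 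One should also note the integrability conditions in the Remark are inherited because $\Phi$ and its derivatives are smooth (hence locally bounded) and $\Phi(X)$ stays in $M'$; strictly speaking this requires $\Phi$ to be a diffeomorphism onto its image so that solutions do not explode through the coordinate change, which is built into the definition of symmetry. Hence $\Phi$ is a symmetry.

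For the ``only if'' direction I would argue by contraposition / by a uniqueness-of-decomposition argument. Suppose $\Phi$ is a symmetry. Fix an arbitrary point $x_0\in M$ and take the solution $X$ with $X_0=x_0$. By the Ito formula, $\Phi(X)$ has the decomposition displayed above with drift $L(\Phi)(X_t)$ and diffusion $\nabla(\Phi)(X_t)\cdot\sigma(X_t)$; by hypothesis $\Phi(X)$ is also a solution of SDE$(\mu,\sigma)$, so it has drift $\mu(\Phi(X_t))$ and diffusion $\sigma(\Phi(X_t))$. Uniqueness of the semimartingale (Doob--Meyer) decomposition forces the two drifts to agree $dt\,d\mathbb{P}$-a.e.\ and the two diffusion (martingale) parts to have the same quadratic variation, hence the integrands agree a.e.; evaluating at $t=0$ and using that $x_0$ was arbitrary gives $L(\Phi)(x_0)=\mu(\Phi(x_0))$ and $\nabla(\Phi)(x_0)\cdot\sigma(x_0)=\sigma(\Phi(x_0))$ for all $x_0\in M$, by continuity of all the data. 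This yields the two claimed identities.

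The only genuinely delicate point — ``the hard part'' — is the $t=0$ localization in the converse: to read off a pointwise identity in $x$ from an a.e.-in-$(t,\omega)$ identity along the single trajectory $X$, one needs that the diffusion reaches a neighborhood of every point, or more cleanly one simply evaluates the continuous integrands at $t\to 0^+$ using $X_t\to x_0$ a.s.\ and right-continuity, which is why choosing the initial condition freely is essential. Everything else is a routine application of the Ito formula and uniqueness of the Doob--Meyer decomposition, so I would present the proof compactly: write the Ito expansion of $\Phi(X)$, match it term-by-term against \eqref{equation_SDE}, and invoke decomposition uniqueness together with the arbitrariness of $X_0$.
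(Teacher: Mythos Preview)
Your proposal is correct and follows exactly the approach the paper indicates: the paper simply states that the result is immediate from the Ito formula, and your argument spells this out, applying Ito to $\Phi(X)$ and matching drift and diffusion terms, with the converse handled via uniqueness of the semimartingale decomposition and arbitrariness of the initial point. There is nothing to add.
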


It is well-known that vector fields acting as infinitesimal generators of one parameter transformation groups are the most important tools in Lie group theory.
\begin{definition}[strong infinitesimal symmetry]
 A vector field $Y$ is said a (strong infinitesimal) symmetry of the SDE $(\mu,\sigma)$ if the group of the local diffeomorphism $\Phi_a$ generated by $Y$ is a symmetry of the SDE $(\mu,\sigma)$ for any $a \in \mathbb{R}$.
\end{definition}
The following determining equations for (any) infinitesimal symmetries are well-known (see, e.g., \cite{Gaeta1}). For their generalization
to a weak solution case see \cite{DMU}.
\begin{theorem}[Determining equations]
A vector field $Y$ is an infinitesimal symmetry of the SDE $(\mu,\sigma)$ if and only if
\begin{eqnarray}
Y(\mu)-L(Y)&=&0 \label{determining_equation1}\\
\left[Y,\sigma_{\alpha}\right]&=&0. \label{determining_equation2}
\end{eqnarray}
where $\sigma_{\alpha}$ is the $\alpha$-column of $\sigma$ ($\alpha=1,...,m$) and $[\cdot.\cdot]$ are the standard Lie brackets between vector fields.
\end{theorem}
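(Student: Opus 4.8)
\noindent\emph{Proof strategy.} The plan is to obtain the infinitesimal criterion by differentiating the finite one at the group parameter origin, and to recover the finite one from the infinitesimal one by a linear--ODE uniqueness argument. Let $\Phi_a$ be the local flow of $Y$, so that $\Phi_0=\operatorname{id}$ and $\partial_a\Phi_a=Y\circ\Phi_a$. By the definition of infinitesimal symmetry together with the finite--symmetry characterisation recalled above, $Y$ is an infinitesimal symmetry if and only if, for every $a$ in a neighbourhood of $0$,
\begin{equation*}
L(\Phi_a)=\mu\circ\Phi_a,\qquad \nabla(\Phi_a)\cdot\sigma_\alpha=\sigma_\alpha\circ\Phi_a\quad(\alpha=1,\dots,m).
\end{equation*}

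For the ``only if'' implication I would differentiate both identities in $a$ and put $a=0$, using $\Phi_0=\operatorname{id}$, $\partial_a\Phi_a|_{a=0}=Y$, and the fact that the spatial derivatives appearing in $L$ and in $\nabla$ commute with $\partial_a$. The first identity then gives $L(Y)=\nabla(\mu)\cdot Y=Y(\mu)$, which is \eqref{determining_equation1}. For the second identity, the pushforward formula recalled above rewrites $\nabla(\Phi_a)\cdot\sigma_\alpha=\sigma_\alpha\circ\Phi_a$ as $(\Phi_a)_*(\sigma_\alpha)=\sigma_\alpha$, i.e.\ the invariance of the vector field $\sigma_\alpha$ under the flow of $Y$; since $(\Phi_a)_*(\sigma_\alpha)$ is constant in $a$, the classical flow/Lie--derivative relation $\frac{d}{da}(\Phi_{-a})_*(\sigma_\alpha)=(\Phi_{-a})_*([Y,\sigma_\alpha])$ forces $[Y,\sigma_\alpha]=0$, which is \eqref{determining_equation2}.

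For the ``if'' implication I would proceed in two steps, in the reverse order. First, from $[Y,\sigma_\alpha]=0$ the same relation $\frac{d}{da}(\Phi_{-a})_*(\sigma_\alpha)=(\Phi_{-a})_*([Y,\sigma_\alpha])=0$, with initial value $(\Phi_0)_*(\sigma_\alpha)=\sigma_\alpha$, yields $(\Phi_a)_*(\sigma_\alpha)=\sigma_\alpha$ for all $a$, that is $\nabla(\Phi_a)\cdot\sigma_\alpha=\sigma_\alpha\circ\Phi_a$ for all $a$. Second, set $P(a):=L(\Phi_a)-\mu\circ\Phi_a$; then $P(0)=L(\operatorname{id})-\mu=0$ since $L(x^i)=\mu^i$. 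Differentiating $P$ in $a$, expanding $L(\partial_a\Phi_a)=L(Y\circ\Phi_a)$ by the chain rule for the second--order operator $L$, substituting the identity from the first step (which turns $\sum_i\sigma^i_\alpha\,\partial_{x^i}\Phi_a^l$ into $\sigma^l_\alpha\circ\Phi_a$) and recognising $\frac{1}{2}\sum_{i,j,\alpha}\sigma^i_\alpha\sigma^j_\alpha\partial_{x^ix^j}\Phi_a^l+\sum_i\mu^i\partial_{x^i}\Phi_a^l=L(\Phi_a^l)=(\mu^l\circ\Phi_a)+P(a)^l$, one should arrive at
\begin{equation*}
\partial_a P(a)=\big(\nabla(Y)\circ\Phi_a\big)\cdot P(a)+\big(L(Y)-Y(\mu)\big)\circ\Phi_a .
\end{equation*}
By \eqref{determining_equation1} the second term vanishes, so $P$ solves a homogeneous linear ODE in $a$ with $P(0)=0$, whence $P\equiv 0$; combined with the first step this gives both finite--symmetry conditions for every $a$, i.e.\ $Y$ is an infinitesimal symmetry.

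The differentiations under $L$ and $\nabla$, the passage between the Lie bracket and the flow, and the uniqueness for the linear ODE in $a$ are all routine. The step that requires genuine care — and the main obstacle — is the chain--rule expansion of $L(Y\circ\Phi_a)$: being second order, $L$ produces terms of the three types $\partial_{x^i}\Phi_a\,\partial_{x^j}\Phi_a$, $\partial_{x^ix^j}\Phi_a$ and $\partial_{x^i}\Phi_a$, and one must check that after inserting the diffusion identity of the first step these reorganise \emph{exactly} into $(L(Y))\circ\Phi_a+\big(\nabla(Y)\circ\Phi_a\big)\cdot P(a)$, with no leftover terms.
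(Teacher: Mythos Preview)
The paper does not supply a proof of this theorem: it states that the determining equations are ``well-known'' and refers the reader to \cite{Gaeta1} (and to \cite{DMU} for the weak case). So there is no in-paper argument to compare against.

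Your proposal is a correct and standard derivation. The ``only if'' direction is exactly the usual linearisation of the finite conditions $L(\Phi_a)=\mu\circ\Phi_a$ and $\nabla(\Phi_a)\cdot\sigma_\alpha=\sigma_\alpha\circ\Phi_a$ at $a=0$, and your use of the flow/Lie-derivative relation to pass from $(\Phi_a)_*(\sigma_\alpha)=\sigma_\alpha$ to $[Y,\sigma_\alpha]=0$ is the right mechanism. For the ``if'' direction your two-step strategy is sound: first recover the diffusion identity from $[Y,\sigma_\alpha]=0$ by integrating the vanishing Lie derivative along the flow; then, with that identity in hand, the chain-rule expansion of $L(Y\circ\Phi_a)$ indeed collapses to $(L(Y))\circ\Phi_a+(\nabla(Y)\circ\Phi_a)\cdot P(a)$ --- the second-order cross terms $\partial_{x^i}\Phi_a^k\,\partial_{x^j}\Phi_a^m$ become $(\sigma^k_\alpha\sigma^m_\alpha)\circ\Phi_a$ precisely because of the diffusion identity, and the remaining first-order pieces regroup into $(\nabla(Y)\circ\Phi_a)\cdot L(\Phi_a)=(\nabla(Y)\circ\Phi_a)\cdot(\mu\circ\Phi_a+P(a))$, giving the drift part of $L(Y)\circ\Phi_a$ plus the $P(a)$ term. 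The resulting linear ODE $\partial_a P=(\nabla(Y)\circ\Phi_a)\,P$ with $P(0)=0$ forces $P\equiv0$. The only caveat is that all of this is local in $a$ (the flow $\Phi_a$ is only local), which matches the definition of infinitesimal symmetry used in the paper.
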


\subsection{Numerical integration of SDEs}\label{subsection_integration}

For convenience of the reader, we recall the two main numerical methods for simulating a SDE and a theorem on the strong convergence of these methods (for a detailed description see e.g. \cite{Kloeden}).\\
Consider the SDE having coefficients $(\mu,\sigma)$, driven by the Brownian motion $W$, and let $\{t_n\}_n$  be a partition of $[0,T]$. The \emph{Euler scheme} for the equation $(\mu,\sigma)$ with respect to the given partition  is provided  by the following sequence of random variables $X_n \in M$
$$X^i_n=X^i_{n-1}+\mu^i(X_{n-1}) \Delta t_n +\sum_{\alpha=1}^m \sigma^i_{\alpha}(X_{n-1}) \Delta W^{\alpha}_n,$$
where $\Delta t_n=t_n-t_{n-1}$ and $\Delta W^{\alpha}_n= W^{\alpha}_{t_n}-W^{\alpha}_{t_{n-1}}$.
The \emph{Milstein scheme} for the same equation $(\mu,\sigma)$ is instead constituted by the sequence of random variables $\bar{X}_n \in M$ such that
\begin{eqnarray*}
\bar{X}^i_n&=&\bar{X}^i_{n-1}+\mu^i(\bar{X}_{n-1}) \Delta t_n +\sum_{\alpha=1}^m \sigma^i_{\alpha}(\bar{X}_{n-1}) \Delta W^{\alpha}_n+\\
&&+\frac{1}{2}\sum_{j=1}^n\sum_{\alpha,\beta=1}^m\sigma^j_{\alpha}(\bar{X}_{n-1}) \partial_{x^j}(\sigma^i_{\beta})(\bar{X}_{n-1}) \Delta \mathbb{W}^{\alpha,\beta}_n,
\end{eqnarray*}
where $\Delta \mathbb{W}^{\alpha,\beta}_n=\int_{t_{n-1}}^{t_n}{(W^{\beta}_s-W^{\beta}_{t_{n-1}})dW^{\alpha}_s}$. We recall that when $m=1$ we have that
$$\Delta \mathbb{W}^{1,1}_n=\frac{1}{2}((\Delta W_n)^2-\Delta t_n).$$

\begin{theorem}\label{theorem_convergence}
Let us denote by $X_t$ the exact solution of a SDE $(\mu,\sigma)$ and by $X_N$ and $\bar{X}_N$  the N-step approximations according with Euler and Milstein scheme respectively. Suppose that the coefficients $(\mu,\sigma)$ are $C^2$ with bounded derivatives and put $t_n=\frac{nT}{N}$ and $h=\frac{T}{N}$. Then there exists a constant $C(T,\mu,\sigma)$ such that
$$\epsilon_N=\left(\mathbb{E}[\|X_T-X_N\|^2]\right)^{1/2} \leq C(T,\mu,\sigma) h^{1/2}.$$
Furthermore when the coefficients $(\mu,\sigma)$ are $C^3$ with bounded derivatives then there exists a constant $\bar{C}(T,\mu,\sigma)$ such that
$$\bar{\epsilon}_N=\left(\mathbb{E}[\|X_T-\bar{X}_N\|^2]\right)^{1/2} \leq \bar{C}(T,\mu,\sigma) h.$$
\end{theorem}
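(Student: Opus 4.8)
The plan is to follow the classical stochastic‑Taylor analysis underlying Theorem~\ref{theorem_convergence} (see \cite{Kloeden}), treating the two schemes in parallel. The first step is to pass from the discrete recursions to their continuous‑time interpolants on $[0,T]$: for $t\in[t_{n-1},t_n]$ put
$$Y_t=X_{n-1}+\mu(X_{n-1})(t-t_{n-1})+\sum_{\alpha=1}^m\sigma_\alpha(X_{n-1})\bigl(W^\alpha_t-W^\alpha_{t_{n-1}}\bigr)$$
in the Euler case, so that, writing $\eta(t)=t_{n-1}$ for $t\in[t_{n-1},t_n)$, the process $Y$ solves $dY_t=\mu(Y_{\eta(t)})\,dt+\sum_\alpha\sigma_\alpha(Y_{\eta(t)})\,dW^\alpha_t$ and agrees with $X_n$ at the grid points; an analogous interpolant $\bar Y$ of the Milstein scheme carries in addition the double‑stochastic‑integral correction. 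I would then record the a priori moment bounds $\sup_{t\le T}\mathbb{E}\|Y_t\|^2<\infty$, $\sup_{t\le T}\mathbb{E}\|\bar Y_t\|^2<\infty$ (and the analogous bound for $X$), which follow from the linear growth of $\mu,\sigma$ implied by the boundedness of their first derivatives, together with the elementary one‑step estimate $\mathbb{E}\|Y_t-Y_{\eta(t)}\|^2\le C\,h$ read off directly from the explicit increment.

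For the Euler scheme this already suffices. Setting $e_t=X_t-Y_t$, one splits $e_t$ into its drift and martingale parts, applies Cauchy--Schwarz to the former and It\^o isometry to the latter, and uses the Lipschitz bounds on $\mu,\sigma$ to obtain
$$\mathbb{E}\|e_t\|^2\ \le\ C_1\int_0^t\mathbb{E}\|e_s\|^2\,ds+C_2\int_0^t\mathbb{E}\|Y_s-Y_{\eta(s)}\|^2\,ds\ \le\ C_1\int_0^t\mathbb{E}\|e_s\|^2\,ds+C_3(T,\mu,\sigma)\,h ;$$
Gronwall's lemma then gives $\sup_{t\le T}\mathbb{E}\|e_t\|^2\le C(T,\mu,\sigma)\,h$, i.e. $\epsilon_N\le C(T,\mu,\sigma)^{1/2}h^{1/2}$. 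Only $\mu,\sigma\in C^2$ with bounded derivatives is used here (via the linear growth and the Lipschitz constants).

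The Milstein scheme requires the genuine local‑error computation. On each subinterval I would expand the exact solution by the It\^o--Taylor formula,
$$X^i_{t_n}=X^i_{t_{n-1}}+\mu^i(X_{t_{n-1}})\,\Delta t_n+\sum_{\alpha}\sigma^i_\alpha(X_{t_{n-1}})\,\Delta W^\alpha_n+\frac12\sum_{j,\alpha,\beta}\sigma^j_\alpha(X_{t_{n-1}})\,\partial_{x^j}(\sigma^i_\beta)(X_{t_{n-1}})\,\Delta\mathbb{W}^{\alpha,\beta}_n+R^i_n ,$$
and observe that the scheme reproduces the drift term, the diffusion term \emph{and} the leading double‑integral term, so that the one‑step error is the remainder $R^i_n$. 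Two facts then make the half‑order gain appear: (i) the surviving terms of $R_n$ are, conditionally on $\mathcal{F}_{t_{n-1}}$, centered martingale increments (e.g. $\mathbb{E}[\Delta\mathbb{W}^{\alpha,\beta}_n\mid\mathcal{F}_{t_{n-1}}]=0$, while $\int_{t_{n-1}}^{t_n}(W^\alpha_s-W^\alpha_{t_{n-1}})\,ds=\int_{t_{n-1}}^{t_n}(t_n-s)\,dW^\alpha_s$ is again a martingale increment), so their $L^2$‑norm over a step is $O(h^{3/2})$ and their conditional mean is $O(h^2)$; and (ii) in the global comparison the frozen‑coefficient discretization error of the drift, after one Taylor step, is likewise a sum of martingale increments contributing $O(h^2)$ in $L^2$ rather than the naive $O(h)$. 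Feeding these into the same split‑and‑Gronwall argument yields $\mathbb{E}\|e_t\|^2\le C_1\int_0^t\mathbb{E}\|e_s\|^2\,ds+C_2(T,\mu,\sigma)\,h^2$, hence $\bar\epsilon_N\le\bar C(T,\mu,\sigma)\,h$; equivalently one may invoke Milstein's fundamental convergence theorem with one‑step orders $(p_1,p_2)=(2,\tfrac32)$. Bounding $R_n$ and the Taylor remainders of $\mu$ needs one more derivative than in the Euler case, which is exactly why $\mu,\sigma\in C^3$ with bounded derivatives is assumed.

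The main obstacle is the bookkeeping of this local step for the Milstein scheme: organizing the remainder $R_n$ so that each of its terms is estimated in the appropriate $L^p$‑sense, identifying the conditional‑centering structure that upgrades the crude $O(h)$ bounds to $O(h^{3/2})$ and $O(h^2)$, and keeping the dependence of the final constant on $T$ and on the $C^k$‑norms of $\mu,\sigma$ explicit so that Gronwall's lemma closes. Everything else — the interpolation, the moment bounds, and the Lipschitz‑plus‑Gronwall step — is routine.
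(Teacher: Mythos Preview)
Your sketch is correct and is essentially the classical stochastic--Taylor/Gronwall argument; the paper, however, does not reproduce any of it and simply refers the reader to Theorems~10.2.2 and~10.3.5 in \cite{Kloeden}. In other words, the paper treats this as a quoted background result, while you have outlined the standard proof that those references contain, so there is no substantive discrepancy to discuss.
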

\begin{proof}
See Theorem 10.2.2 and Theorem 10.3.5 in \cite{Kloeden}.
\end{proof}\\

Theorem \ref{theorem_convergence} states that $X_N$ and $\bar{X}_N$ strongly converge in $L^2(\Omega)$ to the exact solution $X_T$ of the
SDE $(\mu,\sigma)$, where the order of the  convergence with respect to the step size variation  $h=\frac{T}{N}$ is $\frac{1}{2}$ in the
Euler case and $1$ in the Milstein one.\\
Nevertheless the theorem gives no information on the behaviour of the numerical approximations when we fix the step size $h$ and we vary
the final time $T$. In the standard proof of Theorem \ref{theorem_convergence} one estimates the constants $C(T,\mu,\sigma)$ and
$\bar{C}(T,\mu,\sigma)$ by proving that there exist two positive constants $K(\mu,\sigma), K'(\mu,\sigma)$ such that
$C(T,\mu,\sigma)=\exp(T \cdot K(\mu,\sigma))$ and $\bar{C}(T,\mu,\sigma)=\exp(T \cdot K'(\mu,\sigma))$, by using Gronwall Lemma. In
some situations the exponential growth of the error is a correct prediction (see for example \cite{Milstein1998}).\\
Of course this fact  does not mean that  in
any case the errors $\epsilon_n$ and $\bar{\epsilon}'_n$  exponentially diverge with the time $T$. Indeed if the SDE $(\mu,\sigma)$ admits an equilibrium distribution it could happen that the two errors remain bounded with
respect to the time $T$. Unfortunately this favorable situation does not happen for any values of the step size $h$, but only for values
within a certain region. The phenomenon  just described is known as the  \emph{stability}  problem for a discretization method of a SDE.
 This problem, and the corresponding definition, is usually stated and tested for some specific SDEs (see e.g. \cite{Higham2000,Tocino2005}
  for the geometric Brownian motion, see e.g. \cite{Hernandez1992,Saito1996} the Ornstein-Uhlenbeck process, see e.g.
  \cite{Higham2005,Higham2007} for non-linear equations with a Dirac delta equilibrium distribution, and see e.g. \cite{Yuan2004}
  for more general situation).  We will show some numerical examples of the stability phenomenon for general linear SDEs in Section
  \ref{section_numerical}.

\section{Numerical integration via symmetries}\label{section_symmetries_numerical}

\subsection{Invariant numerical algorithms}\label{subsection_invariant}

When a system of ODEs admits  Lie-point symmetries then  invariant numerical algorithms can be constructed (see e.g. \cite{Levi2006,Levi2011,Dorodnitsyn2011,Winternitz2016}). By completeness we recall the definition of an invariant numerical scheme for a system of ODEs, in the simple case of one-step algorithms. The obvious extension for multi-step numerical schemes is immediate. The discretization of an ODEs system  is a function $F:M \times \mathbb{R} \rightarrow M$ such that if $ x_{n},x_{n-1} \in M$ are the $n, n-1$ steps respectively and $\Delta t_n$ is the step size of our discretization we have that
\begin{eqnarray*}
x_n=F(x_{n-1}, \Delta t_n).
\end{eqnarray*}
If $\Phi:M \rightarrow M$ is a diffeomorphism we say that the discretization defined by the map $F$ is \emph{invariant} with respect to the map $\Phi$ if it happens that
$$\Phi(x_n)=F(\Phi(x_{n-1}),\Delta t_n).$$
If we require that the previous property holds for any $x_n \in \mathbb{R}^n$ and for any $\Delta t_n \in \mathbb{R}_+$ we get
\begin{equation}\label{equation_invariant2}
\Phi^{-1}( F(\Phi(x),\Delta t))=F(x,\Delta t)
\end{equation}
for any $x\in M$ and $\Delta t \in \mathbb{R}$. If $\Phi_a$ is an one-parameter group generated by the vector field $Y=Y^i(x) \partial_{x^i}$, by deriving the relation  $\Phi_{-a}(F(\Phi_a(x),\Delta t))=F(x,\Delta t)$ with respect to $a$, we obtain the relation
\begin{equation}\label{equation_invarinat3}
Y^i(F(x,\Delta t))-Y^k\partial_{x^k}(F)(x,\Delta t)=0
\end{equation}
 which guarantees that the discretization $F$ is invariant with respect to $\Phi_a$, generated by
 $Y$.\\
We can extend the previous definition to the case of a SDE in the following way. Let us  discuss an integration scheme which depends only on the time $\Delta t$ and on the Brownian motion $\Delta W^{\alpha}_n, \alpha=1,\dots,m$ (as for example the Euler method). The same discussion for integration methods depending also on $\Delta \mathbb{W}^{\alpha,\beta}_n$ or other random variables (as the Milstein method) is immediate. In the stochastic case the discretization is a map $F:M \times \mathbb{R}
 \times\mathbb{R}^m \rightarrow M$ and we have
 $$x_n=F(x_{n-1},\Delta t, \Delta W^1,...,\Delta W^m).$$
 Equations \refeqn{equation_invariant2} and
 \refeqn{equation_invarinat3} become
 \begin{eqnarray}
 \Phi^{-1}(F(\Phi(x),\Delta t, \Delta W^{\alpha}))&=&F(x,\Delta t,
 \Delta W^{\alpha}),\label{equation_invariant4}\\
Y^i(F(x,\Delta t, \Delta
W^{\alpha}))-Y^k\partial_{x^k}(F)(x,\Delta t, \Delta
W^{\alpha})&=&0.\label{equation_invariant5}
 \end{eqnarray}

 Since Ito integral strongly depends
 on the fact that the approximation is backward (and not
forward), we stress again that it is not  easy  to prove that a given
discretization $X_n$ converges to the real solution
of the SDE $(\mu,\sigma)$. For this reason we give a theorem which
provides a sufficient (and necessary) condition in order that  Euler and Milstein discretizations of a SDE are invariant with
respect to any strong symmetries $Y_1,...,Y_r$.

\begin{theorem}\label{theorem_invariant}
Let $Y_1,...,Y_r$ be strong symmetries of a SDE $(\mu,\sigma)$. When $Y^i_j=Y_j(x^i)$ are polynomials of first degree in $x^1,...,x^n$, then the Euler discretization (or the Milstein discretization) of the SDE $(\mu,\sigma)$ is invariant with respect to $Y_1,...,Y_r$. If for a given $x_0 \in M$, $\spann\{\sigma_1(x_0),\dots,\sigma_{m}(x_0) \}=\mathbb{R}^n$, also the converse holds.
\end{theorem}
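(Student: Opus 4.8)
The plan is to treat a single symmetry $Y=Y_j$, since invariance with respect to $Y_1,\dots,Y_r$ amounts to invariance with respect to each one separately and the corresponding conditions decouple. For the ``if'' direction I would pass to the finite symmetry $\Phi_a$ generated by $Y$: since $Y$ has affine (first-degree) components, $\Phi_a$ is affine, $\Phi_a(x)=C_ax+d_a$, with $C_a:=\nabla(\Phi_a)$ a matrix independent of $x$. From the symmetry characterization $L(\Phi_a)=\mu\circ\Phi_a$, $\nabla(\Phi_a)\cdot\sigma=\sigma\circ\Phi_a$, together with the vanishing of the second $x$-derivatives of $\Phi_a$ (whence $L(\Phi_a^i)=(C_a\mu)^i$), one gets $\mu\circ\Phi_a=C_a\mu$ and $\sigma_\alpha\circ\Phi_a=C_a\sigma_\alpha$ for every $\alpha$. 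Substituting these into the Euler map $F(x,\Delta t,\Delta W)=x+\mu(x)\Delta t+\sum_\alpha\sigma_\alpha(x)\Delta W^\alpha$ one checks at once that $\Phi_a(F(x,\Delta t,\Delta W))$ and $F(\Phi_a(x),\Delta t,\Delta W)$ both equal $\Phi_a(x)+C_a\mu(x)\Delta t+\sum_\alpha C_a\sigma_\alpha(x)\Delta W^\alpha$, which is \refeqn{equation_invariant4}. (Equivalently, differentiating in $a$ one recovers \refeqn{equation_invariant5}, and this also follows directly from the determining equations: affineness of $Y$ gives $L(Y^i)=(\nabla(Y)\mu)^i$, so \refeqn{determining_equation1} yields $Y(\mu^i)=(\nabla(Y)\mu)^i$ and \refeqn{determining_equation2} yields $Y(\sigma^i_\alpha)=(\nabla(Y)\sigma_\alpha)^i$, which are exactly the two identities needed in \refeqn{equation_invariant5}.)

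For the Milstein map $\bar F$ there is, besides the Euler terms, the contribution $\frac{1}{2}\sum_{\alpha,\beta}G^i_{\alpha\beta}(x)\,\Delta\mathbb{W}^{\alpha,\beta}$ with $G^i_{\alpha\beta}:=\sum_j\sigma^j_\alpha\partial_{x^j}(\sigma^i_\beta)$, so I would additionally verify that this term transforms like the others, i.e. $G_{\alpha\beta}\circ\Phi_a=C_aG_{\alpha\beta}$. This I would get by differentiating $\sigma_\beta\circ\Phi_a=C_a\sigma_\beta$ in $x$ (using that $\nabla(\Phi_a)=C_a$ is constant) and contracting with $\sigma_\alpha(x)$, then using $\sigma_\alpha\circ\Phi_a=C_a\sigma_\alpha$ once more; infinitesimally this is $[Y,G_{\alpha\beta}]=0$, which also follows from \refeqn{determining_equation2}, the Jacobi identity and the affineness of $Y$. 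With this, the Milstein step is invariant for the same reason as the Euler step.

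For the ``only if'' part, assume the Euler (resp.\ Milstein) discretization is invariant with respect to $Y$ and that $\spann\{\sigma_1(x_0),\dots,\sigma_m(x_0)\}=\mathbb{R}^n$. I would use the infinitesimal form \refeqn{equation_invariant5} (for Milstein with $\Delta\mathbb{W}^{\alpha,\beta}$ also among the arguments). The key observation is that in both schemes $F^i$ is linear in the increments $\Delta W^\alpha$, so $\partial_{\Delta W^\alpha}\partial_{\Delta W^\beta}F^i\equiv 0$. Applying $\partial_{\Delta W^\alpha}\partial_{\Delta W^\beta}$ to \refeqn{equation_invariant5}, the right-hand side vanishes, while the chain rule on the left yields $\sum_{p,l}\partial_{x^px^l}(Y^i)(F(x,\Delta t,\Delta W))\,\sigma^p_\alpha(x)\,\sigma^l_\beta(x)=0$ for all $x$, all increments, and all $\alpha,\beta$. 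Setting $x=x_0$ and using that $\sigma_1(x_0),\dots,\sigma_m(x_0)$ span $\mathbb{R}^n$, the Hessian of each $Y^i$ must vanish at the point $F(x_0,\Delta t,\Delta W)=x_0+\mu(x_0)\Delta t+\sum_\gamma\sigma_\gamma(x_0)\Delta W^\gamma$. Finally I would observe that, as $(\Delta t,\Delta W)$ vary, these points exhaust $M$: for $y\in M$ and any admissible $\Delta t$, the vector $y-x_0-\mu(x_0)\Delta t$ lies in $\mathbb{R}^n=\spann\{\sigma_\gamma(x_0)\}$ and hence equals $\sum_\gamma\sigma_\gamma(x_0)\Delta W^\gamma$ for suitable $\Delta W$. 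Therefore the Hessian of every $Y^i$ vanishes on all of $M$, so $Y^i$ is a first-degree polynomial (on each connected component of $M$).

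The hard part will be the ``only if'' direction, for two reasons. First, one must recognize that the invariance relation should be differentiated twice in the Brownian increments; this exploits in an essential way that the Euler and Milstein updates are linear in $\Delta W$, and a higher-order scheme would destroy the argument. Second, one must upgrade the pointwise vanishing of the Hessian at $x_0$ to vanishing on all of $M$, which is exactly what the full-rank hypothesis at $x_0$ buys, through the fact that a single step of the scheme started at $x_0$ can land at any point of $M$. In the ``if'' direction the only slightly delicate point is the invariance of the Milstein correction field $G_{\alpha\beta}$, which reduces to differentiating the symmetry identity for $\sigma_\beta$.
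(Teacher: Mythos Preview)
Your proposal is correct and complete. The overall strategy coincides with the paper's, but the execution differs in both directions.

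For the ``if'' direction, the paper stays at the infinitesimal level: it expands $Y_j^k\partial_{x^k}(F^i)-Y_j^i(F)$, substitutes the determining equations \refeqn{determining_equation1}--\refeqn{determining_equation2}, and arrives at the equivalent condition
\[
Y^i_j(x)+\mu^k\partial_{x^k}(Y^i_j)\Delta t+\tfrac12\sigma^k_\alpha\sigma^h_\alpha\partial_{x^kx^h}(Y^i_j)\Delta t+\sigma^k_\alpha\partial_{x^k}(Y^i_j)\Delta W^\alpha
= Y^i_j\bigl(x+\mu\Delta t+\sigma_\alpha\Delta W^\alpha\bigr),
\]
which is obviously satisfied when $Y^i_j$ is affine. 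You instead integrate to the finite flow $\Phi_a$, observe that it is affine, and verify \refeqn{equation_invariant4} directly; this is equivalent and, for the Milstein correction $G_{\alpha\beta}$, arguably cleaner.

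For the ``only if'' direction, the paper's route is more economical than yours. Once the invariance condition has been rewritten as above, setting $\Delta t=0$ at $x_0$ gives
\[
Y^i_j\bigl(x_0+\sigma_\alpha(x_0)\Delta W^\alpha\bigr)=Y^i_j(x_0)+\sigma^k_\alpha(x_0)\partial_{x^k}(Y^i_j)(x_0)\,\Delta W^\alpha,
\]
i.e.\ $Y^i_j$ coincides with its first-order Taylor polynomial at $x_0$ along all directions $\sigma_\alpha(x_0)\Delta W^\alpha$; the span hypothesis then forces affineness immediately, with no differentiation in $\Delta W$ required. Your double differentiation in $\Delta W$ reaches the same conclusion through the Hessian, and your final step (varying the increments so that $F(x_0,\Delta t,\Delta W)$ sweeps over $M$) is the analogue of the paper's implicit use of the span to cover a neighborhood. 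Both arguments share the same mild imprecision when $M\subsetneq\mathbb{R}^n$, which you correctly flag by restricting to connected components.
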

\begin{proof}
We give the proof for the Euler discretization because for the Milstein discretization the proof is very similar.
In the case of Euler discretization we have that
$$F^i(x)=x^i+\mu^i(x) \Delta t+ \sigma^i_{\alpha}(x) \Delta W^{\alpha}.$$
The discretization is invariant if and only if
\begin{eqnarray*}
0=Y_j(F^i)(x)- Y^i_j( F(x))&=&+Y^k_j \partial_{x^k}(F^i)(x)-Y^i_j( F(x))\\
&=&Y^i_j(x)+Y^k_j(x) \partial_{x^k}(\mu^i)(x)\Delta t+Y^k_j(x)\partial_ {x^k}(\sigma^{i}_{\alpha})(x)\Delta W^{\alpha}+\\
&&- Y^i_j(x+\mu \Delta t +\sigma_ {\alpha} \Delta W^{\alpha}).
\end{eqnarray*}
Recalling that $Y_j$ is a symmetry for the SDE $(\mu,\sigma)$ and therefore it has to satisfy the determining equations \eqref{determining_equation1} and \eqref{determining_equation2}, we have that
the Euler discretization  is invariant if and only if
\begin{equation}\label{equation_invariant1}
\begin{array}{c}
Y^i_j(x)+\mu^k(x)\partial_{x^k}(Y^i_j)(x)\Delta t+\frac{1}{2}\sum_{\alpha}\sigma^k_{\alpha}\sigma^h_{\alpha}\partial_{x^kx^h}(Y^i_j)(x) \Delta t+ \sigma^k_ {\alpha}(x)\partial_ {x^k}(Y^i_j)(x) \Delta W^{\alpha} =\\
=Y^i_j(x+\mu \Delta t+ \sigma_{\alpha} \Delta W^{\alpha}).
\end{array}
\end{equation}
Suppose that $Y_j^i=B_j^i+C^i_{j,k}x^k$, then
\begin{eqnarray*}
&Y^i_j(x)+\mu^k(x)\partial_{x^k}(Y^i_j)(x)\Delta t+\frac{1}{2}\sum_{\alpha}\sigma^k_{\alpha}\sigma^h_{\alpha}\partial_{x^kx^h}(Y^i_j)(x) \Delta t+ \sigma^k_ {\alpha}(x)\partial_ {x^k}(Y^i_j)(x) \Delta W^{\alpha}= &\\
&=B^i_j+C^i_{j,k}x^k+C^i_{j,k}\mu^k(x)\Delta t+C^i_{j,k}\sigma^k_{\alpha}(x)\Delta W^{\alpha}&\\
&=B^i_j+C^i_{j,k}(x^k+\mu^k(x) \Delta t +\sigma^k_{\alpha}(x) \Delta W^{\alpha})&\\
&=Y^i_j(x+\mu \Delta t +\sigma_{\alpha} \Delta W^{\alpha}).&
\end{eqnarray*}
Conversely, suppose that the Euler discretization is invariant and
so equality \refeqn{equation_invariant1} holds. Let $x_0$ be as in
the hypotheses of the theorem and choose $\Delta t=0$. Then
$$Y^i_j(x_0+\sigma_{\alpha} \Delta W^{\alpha})=Y^i_j(x_0)+ \sigma^k_{\alpha}(x_0)\partial_{x^k}(Y^i_j)(x_0) \Delta W^{\alpha}.$$
Since $\Delta W^{\alpha}$ are arbitrary and $\spann\{\sigma_1(x_0),...\sigma_{m}(x_0) \}=\mathbb{R}^n$, $Y^i_j$ must be of first degree in $x^1,...,x^n$.
\end{proof}

\begin{remark}
The affinity of the coefficients $Y_j^i$ in Theorem 3.1 strongly depends on the fact that Euler and Milstein numerical approximations depend in an affine way from the noise $\Delta t, \Delta W^{\alpha}, \Delta \mathbb{W}^{\alpha,\beta}$. If we coonsider a non affine numerical approximation we can have non affine symmetries  $Y_1,...,Y_r$  (see the discussion below).

\end{remark}

Theorem \ref{theorem_invariant} can be fruitfully applied in the
following way. If $Y_1,...,Y_r$ are strong symmetries of a SDE we
search a diffeomorphism $\Phi:M \rightarrow M' \subset
\mathbb{R}^n$ (i.e. a coordinate change) such that
$\Phi_*(Y_1),...,\Phi_*(Y_r)$ have coefficients of first degree in
the new coordinates system $x'^1,...,x'^n$. We discretize the
transformed SDE $\Phi(\mu,\sigma)$ using the Euler
discretization, obtaining a discretization $\tilde{F}(x',\Delta t,
\Delta W^{\alpha})$ which is invariant with respect to
$\Phi_*(Y_1),...,\Phi_*(Y_r)$. As a consequence the discretization
$F=\Phi(\tilde{F}(\Phi^{-1}(x),\Delta t, \Delta W^{\alpha})$ is
invariant with respect to $Y_1,...,Y_r$. It is easy to prove that if the map $\Phi$ is
Lipschitz we have that the constructed discretization converges in
$L^1$ to the solution, while if the map $\Phi$ is only locally Lipschitz, the weaker convergence in probability can be established.\\
The existence of the diffeomorphism $\Phi$ allowing the application of
Theorem \ref{theorem_invariant} for general $Y_1,...,Y_r$ is not
guaranteed. Furthermore, even when the map $\Phi$ exists, unfortunately in general it is
not unique. Consider for example the following one-dimensional SDE
\begin{equation}\label{equation_invariant6}
dX_t=\left(a\tanh(X_t)-\frac{b^2}{2} \tanh^3(X_t)\right)dt+b \tanh(X_t) dW_t,
\end{equation}
which has
$$Y=\tanh(x)\partial_x$$
as a strong symmetry.
There are many transformations $\Phi$ which are able to put $Y$ with coefficients of first degree, for example the following two transformations:
\begin{eqnarray*}
\Phi_1(x)&=&\sinh(x)\\
\Phi_2(x)&=&\log{|\sinh(x)|}.
\end{eqnarray*}
Indeed we have that
\begin{eqnarray*}
\Phi_{1,*}(Y)=x'_1\partial_{x'_1},
\Phi_{2,*}(Y)=\partial_{x'_2}.
\end{eqnarray*}
While the map $\Phi_1$ transforms equation \refeqn{equation_invariant6} into a geometrical Brownian motion, the transformation $\Phi_2$ reduces equation \refeqn{equation_invariant6} to a Brownian motion with drift. By applying  Euler method by means of $\Phi_1$ we obtain a poor numerical result ( in fact $\Phi_1$ is  not a Lipschitz function and in this circumstance errors are amplified). By exploiting $\Phi_2$ to make the discretization  we obtain instead an exact simulation. The example shows that this first approach strongly depends on the choice of the diffeomorphism $\Phi$ (which has to be invertible in terms of elementary functions). So it is better to have another procedure able to individuate the best coordinate system for performing the SDE discretization.

\subsection{Adapted coordinates and triangular systems}\label{subsection_adapted}

We introduce a further possible use of Lie's symmetries in
the numerical simulation of a SDE which turns out to be
relevant only in the stochastic framework. Indeed  in the deterministic
setting one can obtain a  completely explicit result.

Suppose that $M=M_1 \times M_2$, with standard cartesian
coordinates $x_1^1,...,x_1^r,x_2^1,...,x^{n-r}_2$, and consider the following triangular
SDE
\begin{eqnarray*}
dX^i_{2,t}&=&\mu^i_2(X_{2,t})dt+
\sigma^i_{2,\alpha}(X_{2,t})dW^{\alpha}_t\\
dX^j_{1,t}&=&\mu^j_1(X^1_{1,t},...,X^{i-1}_{1,t},X_{2,t})dt+
\sigma^j_{1,\alpha}(X^1_{1,t},...,X^{i-1}_{1_t},X_{2,t})dW^{\alpha}_t,
\end{eqnarray*}
where $\mu^i_1,\sigma^i_{1,\alpha}$ do not depend on
$x^i_1,...,x^r_1$. The above SDE is triangular  in
the variables $(x^1_1,...,x^r_1)$. By discretizing a triangular SDE $(\mu,\sigma)$ we reasonable aspect a better behavior than in the general case. Furthermore if  $X^1_{2,t},...,X^{n-r}_{2,t}$ can be exactly simulated with
$\sigma^i_{2,\alpha},\mu^i_2$ growing at most polynomially, we can
conjecture that the error grows polynomially with respect to the maximal integration
time $T$. \\

We recall that the triangular property of stochastic systems is closely related with their symmetries and in
particular to SDEs with a solvable Lie algebra of symmetries.
In order to briefly explain the connection between symmetries and
the triangular form of SDEs, we introduce the following definitions (for more details see \cite{DMU2}).

\begin{definition}
A set of vector fields $Y_1,...Y_r$ on $M$ is called regular on $M$ if, for any $x\in M$, the vectors $Y_1(x),...,Y_r(x)$ are linearly independent.
\end{definition}

\begin{definition}\label{definition_solvable_coordinate}
Let $Y_1,...,Y_r$ be a set of regular vector fields on $M$ which are generators of a solvable Lie algebra $\mathcal{G}$. We say that $Y_1,...,Y_r$ are in \emph{canonical  form} if there are $i_1,...,i_l$ such that $i_1+...+i_l=r$ and
$$(Y_1|...|Y_r)=\left(\begin{array}{c|c|c|c}
I_{i_1} & G^1_1(x) & ... & G^1_l(x) \\
\hline
0 & I_{i_2} & ... & G^2_l(x)\\
\hline
\vdots & \ddots & \ddots & \vdots \\
0 & 0 & ... & I_{i_l}\\
\hline
 0 & 0 & 0 & 0 \end{array} \right), $$
where  $G^h_k:M \rightarrow \matr(i_h,i_k)$ are smooth functions.
\end{definition}

\begin{theorem}
Let a SDE $(\mu,\sigma)$  admit $Y_1,...,Y_r$ as strong
symmetries and let us suppose that $Y_1,...,Y_r$ constitute a solvable Lie
algebra in canonical form. Then the SDE $(\mu,\sigma)$ assumes a triangular form
with respect to $x^{1},....,x^r$.
\end{theorem}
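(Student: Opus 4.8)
The plan is to show that in canonical form, the symmetry conditions force the coefficients $\mu,\sigma$ to have a block-triangular dependence structure matching the block sizes $i_1,\dots,i_l$. The starting point is the determining equations from the excerpt, namely $Y_j(\mu)-L(Y_j)=0$ and $[Y_j,\sigma_\alpha]=0$ for each symmetry $Y_j$. The key observation is that in canonical form the first $i_1$ symmetries are simply the coordinate vector fields $\partial_{x^1},\dots,\partial_{x^{i_1}}$: reading off the first block column of the matrix $(Y_1|\dots|Y_r)$, the $i_1$ columns are $I_{i_1}$ on top and zeros below, i.e. $Y_k=\partial_{x^k}$ for $k=1,\dots,i_1$.

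First I would treat the top block. For $Y_k=\partial_{x^k}$ with $k\le i_1$, equation $[Y_k,\sigma_\alpha]=0$ reads $\partial_{x^k}\sigma_\alpha=0$, so $\sigma$ does not depend on $x^1,\dots,x^{i_1}$. Similarly $Y_k(\mu)=\partial_{x^k}\mu$ and $L(\partial_{x^k})=0$ (since the coefficients of $L$ at this stage are not yet known to be independent of $x^k$, one uses the two determining equations together: actually $L(Y_k)=\sum \mu^i\partial_{x^i}(Y_k)+\tfrac12\sum\sigma\sigma\,\partial^2(Y_k)$, and since $Y_k$ is the constant vector $e_k$, all its derivatives vanish, giving $L(Y_k)=0$), hence $\partial_{x^k}\mu=0$, so $\mu$ is also independent of $x^1,\dots,x^{i_1}$. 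This already shows the $(n-r)$-dimensional "$X_2$" part of the SDE together with the higher $X_1$-blocks is self-contained in the variables other than $x^1,\dots,x^{i_1}$; equivalently the equations for $X^1,\dots,X^{i_1}$ have coefficients depending only on the remaining variables — which is precisely the innermost triangular layer.

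Then I would iterate. Consider the next block of symmetries $Y_k$ for $i_1<k\le i_1+i_2$; these have the form $Y_k=\partial_{x^k}+\sum_{p=1}^{i_1}(G^1_1)^{p}_{\,k}(x)\,\partial_{x^p}$, i.e. a coordinate field modified only in the already-peeled-off directions. The bracket $[Y_k,\sigma_\alpha]=0$ and $Y_k(\mu)-L(Y_k)=0$, combined with the fact that $\sigma,\mu$ are already known to be independent of $x^1,\dots,x^{i_1}$, reduce to $\partial_{x^k}\sigma_\alpha=0$ and $\partial_{x^k}\mu=0$ for $i_1<k\le i_1+i_2$ — the correction terms contribute nothing because they differentiate $\sigma,\mu$ in the $x^1,\dots,x^{i_1}$ directions, along which those functions are constant. (For $\mu$ one must also check that $L(Y_k)$ produces no obstruction: the extra terms in $Y_k$ depend on $x$, but the second-derivative and first-derivative pieces of $L$ hit $(G^1_1)^p_{\,k}$ and then multiply $\sigma\sigma$ or $\mu$; since $Y_k(\mu)$ picks out $\partial_{x^k}\mu$ plus $(G^1_1)^p_k\partial_{x^p}\mu=0$, the determining equation collapses to $\partial_{x^k}\mu=L(Y_k)$, and $L(Y_k)$ must be re-expressed and shown to vanish using the structure — this is the routine but slightly fiddly computation I would not grind through here.) Continuing through all $l$ blocks, one obtains that $\mu^i,\sigma^i_\alpha$ for $i$ in block $h$ depend only on the coordinates in blocks $h+1,\dots,l$ and in the $X_2$-part, which is exactly the triangular form claimed.

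The main obstacle I expect is the bookkeeping in the drift equation: unlike the bracket condition, $Y_j(\mu)-L(Y_j)=0$ involves the full second-order operator $L$ acting on the non-constant parts $G^h_k(x)$ of the symmetry vector fields, so one must carefully verify at each stage of the induction that the already-established $x$-independence of $\sigma$ (and of the lower-block part of $\mu$) kills all the terms coming from $L(Y_j)$ except the ones that combine with $Y_j(\mu)$ to give $\partial_{x^k}\mu=0$. Once the inductive hypothesis is set up correctly (at step $h$: $\mu,\sigma$ independent of all coordinates in blocks $1,\dots,h-1$), each step is a short calculation, but getting the induction statement and the ordering of the blocks right is where care is needed; I would also want to invoke the solvability/canonical-form structure to guarantee that the $G^h_k$ indeed only involve coordinates from strictly earlier blocks, which is what makes the cancellations work.
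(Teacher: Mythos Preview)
Your approach---applying the determining equations block by block using the canonical form---is exactly what the paper intends (its own proof is a one-line reference to the determining equations and Definition~\ref{definition_solvable_coordinate}, with details deferred to \cite{DMU2}). However, your induction is set up too strongly, and this is precisely the source of the ``fiddly computation'' you leave unresolved.

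At step two you try to conclude $\partial_{x^k}\sigma_\alpha=0$ and $\partial_{x^k}\mu=0$ (for $k$ in block~2) as equations on \emph{all} $n$ components. This fails for components $i$ in block~1: the Lie bracket $[Y_k,\sigma_\alpha]^i = Y_k(\sigma_\alpha^i)-\sigma_\alpha(Y_k^i)$ has a second term $\sigma_\alpha^j\partial_{x^j} Y_k^i$ coming from the non-constant entries $Y_k^i=(G^1_1)^i_k(x)$, and this term does not vanish; likewise $L(Y_k)^i$ need not vanish for such $i$. Your justification (``the correction terms contribute nothing because they differentiate $\sigma,\mu$ in the $x^1,\dots,x^{i_1}$ directions'') only kills the \emph{first} term $Y_k(\sigma_\alpha^i)$. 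So $L(Y_k)$ cannot ``be shown to vanish'' in all components as you hope---and in fact it does not.

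The fix is that you do not need it to. The correct inductive hypothesis is: after processing blocks $1,\dots,h$, the components $\mu^i,\sigma^i_\alpha$ with $i> i_1+\cdots+i_{h}$ (i.e.\ $i$ in blocks $h{+}1,\dots,l$ or in the $X_2$-part) are independent of the coordinates in blocks $1,\dots,h$. For these $i$ the entry $Y_k^i$ is constant ($0$ or $1$) by the canonical form, so both $\sigma_\alpha^j\partial_{x^j} Y_k^i$ and $L(Y_k)^i$ vanish trivially, and the determining equations reduce cleanly to $\partial_{x^k}\mu^i=\partial_{x^k}\sigma_\alpha^i=0$. This weaker statement is exactly what the triangular form requires (your own concluding sentence already states it correctly), and with it each inductive step is immediate---no residual computation remains.
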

\begin{proof}
The proof is an application of the determing equations and Definition \ref{definition_solvable_coordinate} (see \cite{DMU2}).
\end{proof}

As a notable consequence when we have a SDE $(\mu,\sigma)$ admitting a solvable regular Lie algebra $Y_1,...,Y_r$ we can apply a methodology similar to the one proposed in the previous subsection. Indeed we can start by searching a map $\Phi:M \rightarrow M'$ such that $\Phi(Y_1),...,\Phi(Y_r)$ constitute a solvable Lie algebra in canonical form so implying that $\Phi(\mu,\sigma)$ is a triangular SDE. We can discretize $\Phi(\mu,\sigma)$ according with one of standard methods obtaining a discretization $\tilde{F}$. By composing $\tilde{F}$ with $\Phi$ we obtain a discretization $F(x,\Delta t, \Delta W^{\alpha})=\Phi^{-1}(\tilde{F}(\Phi(x),\Delta t , \Delta W^{\alpha})$ which, when $\Phi$ is Lipschitz, has the property of being a more simple  triangular discretization scheme. Differently from  Theorem \ref{theorem_invariant}, in the present situation we can always construct the diffeomorphism $\Phi$, as the following proposition states.

\begin{proposition}\label{theorem_solvable_coordinate}
Let $\mathcal{G}$ be an $r$-dimensional solvable Lie algebra on $M$ such that $\mathcal{G}$ has constant dimension $r$ as a  distribution of  $TM$. Then, for any  $x_0 \in M$, there exist a set of generators $Y_1,...,Y_r$  of $\mathcal{G}$ and a local diffeomorphism $\Phi:U(x_0) \rightarrow M'$, such that $\Phi_*(Y_1),...,\Phi_*(Y_r)$ are generators in canonical form for $\Phi_*(\mathcal{G})$.
\end{proposition}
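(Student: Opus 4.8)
The plan is to argue by induction on the derived length (or, equivalently, on the dimension) of the solvable Lie algebra $\mathcal{G}$, building the canonical form one block at a time from the top. First I would recall the structural fact that a solvable Lie algebra $\mathcal{G}$ of dimension $r$ contains a chain of ideals $\mathcal{G}=\mathcal{G}_0 \supset \mathcal{G}_1 \supset \cdots \supset \mathcal{G}_l=\{0\}$ with each quotient $\mathcal{G}_{k-1}/\mathcal{G}_k$ abelian; set $i_k=\dim(\mathcal{G}_{k-1}/\mathcal{G}_k)$, so that $i_1+\cdots+i_l=r$. Since $\mathcal{G}$ has constant rank $r$ as a distribution, by Frobenius the distribution it spans is integrable, and near any $x_0$ we may choose coordinates adapted to the leaf through $x_0$; this reduces us to the case where $\mathcal{G}$ acts with an open orbit, and without loss of generality $\dim M = r$ (the extra coordinates are mere parameters along which everything is constant).

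The heart of the argument is the following step, carried out for $k=1,\dots,l$. Suppose inductively that we have already found a local diffeomorphism putting a complementary subalgebra modulo $\mathcal{G}_k$ into the block-triangular form of Definition \ref{definition_solvable_coordinate}, so that the first $i_1+\cdots+i_{k-1}$ coordinates behave correctly. Now $\mathcal{G}_{k-1}/\mathcal{G}_k$ is an abelian algebra of $i_k$ commuting vector fields on the quotient, which I can straighten simultaneously by the classical simultaneous-flow-box theorem for commuting independent vector fields: there are local coordinates in which these $i_k$ vector fields become $\partial_{x^{s+1}},\dots,\partial_{x^{s+i_k}}$ with $s=i_1+\cdots+i_{k-1}$, i.e. the identity block $I_{i_k}$. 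The price is that the previously straightened generators $Y_1,\dots,Y_s$ acquire, in the new coordinates, components along $\partial_{x^{s+1}},\dots,\partial_{x^{s+i_k}}$; but because $\mathcal{G}_{k-1}$ is an ideal, the brackets $[Y_j, Y_a]$ for $j\le s$, $a$ in the $k$-th block lie in $\mathcal{G}_{k-1}$, and one checks this forces the new lower-block entries of the $Y_j$ to be of the form $G(x)$ depending only on the coordinates $x^1,\dots,x^{s}$ that index strictly higher blocks — precisely the shape $G^h_k(x)$ demanded by the canonical form. One then re-selects the generators $Y_j$ by the obvious triangular change of basis so that the diagonal blocks are exactly the identities.

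The main obstacle, and the point that requires the most care, is exactly this propagation step: verifying that straightening the $k$-th abelian block does not destroy the strict block-triangular dependence already achieved, and that the off-diagonal blocks land in $\matr(i_h,i_k)$ as smooth functions of the correct variables only. This is where solvability (not just nilpotency) is used: the ideal property of the $\mathcal{G}_k$'s, together with the determining of which coordinates a given $Y_j$ is allowed to depend on, comes from repeatedly applying the commutation relations $[Y_j,Y_a]\in\mathcal{G}_{k-1}=\spann\{Y_1,\dots,Y_{s+i_k}\}$ and reading off components. Once this is established, composing the finitely many flow-box diffeomorphisms gives the single local $\Phi:U(x_0)\to M'$, and $\Phi_*(Y_1),\dots,\Phi_*(Y_r)$ are generators in canonical form for $\Phi_*(\mathcal{G})$, as claimed. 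For the full details of the bracket bookkeeping I would refer to \cite{DMU2}, where the correspondence between solvable symmetry algebras and triangular/canonical coordinates is developed.
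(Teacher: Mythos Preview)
The paper itself gives no proof beyond the citation ``See \cite{DMU2}'', so there is no substantive argument in the paper to compare your sketch against; you and the paper both ultimately defer the details to that reference. Your outline does capture the correct ingredients: the chain of ideals with abelian quotients, the Frobenius reduction to the leaf, the simultaneous flow-box theorem for each abelian layer, and an induction along the chain.

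There is, however, a direction error in your bookkeeping relative to Definition~\ref{definition_solvable_coordinate}. In that canonical form the \emph{first} block $Y_1,\dots,Y_{i_1}$ consists of pure coordinate fields $\partial_{x^1},\dots,\partial_{x^{i_1}}$, and a direct bracket computation shows $\spann\{Y_1,\dots,Y_{i_1}\}$ is an abelian \emph{ideal} of $\mathcal{G}$; it therefore sits at the \emph{bottom} of the solvable chain, not at the top quotient $\mathcal{G}/\mathcal{G}_1$ as you index it. Running the induction top-down and placing representatives of $\mathcal{G}/\mathcal{G}_1$ in the first coordinate block, as you describe, yields a \emph{lower}-triangular block matrix---indeed you say yourself that the previously straightened $Y_1,\dots,Y_s$ acquire components along the \emph{later} directions $\partial_{x^{s+1}},\dots,\partial_{x^{s+i_k}}$---which is the transpose of the shape in Definition~\ref{definition_solvable_coordinate}. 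The standard (and cleaner) fix is to run the induction bottom-up: straighten the abelian ideal $\mathcal{G}_{l-1}$ first to get $\partial_{x^1},\dots,\partial_{x^{i_1}}$, pass to the local leaf space, and recurse on $\mathcal{G}/\mathcal{G}_{l-1}$; then it is the \emph{later} generators (the lifts from the quotient) that pick up components along the \emph{earlier} coordinate directions, giving exactly the upper-triangular $G^h_k$ blocks. Alternatively one can keep your order and relabel at the end, but as written your ``lower-block entries of the $Y_j$'' do not match the definition.
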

\begin{proof}
See \cite{DMU2}.
\end{proof}

We conclude by pointing out that for a general solvable Lie algebra $Y_1,...,Y_r$, the map $\Phi$, whose existence is guaranteed by Proposition \ref{theorem_solvable_coordinate}, does not transform $\Phi_*(Y_1),...,\Phi_*(Y_r)$ into a set of vector fields with coefficients of first degree in $x'^1,...,x'^n$. For this reason and by Theorem \ref{theorem_invariant}, the discretization $F$ constructed by using the diffeomorphism $\Phi$ and the usual Euler discretization algorithm is not invariant with respect to $Y_1,...,Y_r$. \\
Nevertheless if we consider solvable Lie algebras satisfying a special relation, then  $\Phi_*(Y_1),...,\Phi_*(Y_r)$ will have coefficients of first degree in $x'^1,...,x'^r$.

\begin{proposition}
Suppose that the Lie algebra $\mathcal{G}=\spann\{Y_1,...,Y_r\}$ is such that $[[\mathcal{G},\mathcal{G}],[\mathcal{G},\mathcal{G}]]=0$. Then  the coefficients of $\Phi_*(Y_1),...,\Phi_*(Y_r)$ are of first degree in $x'^1,...,x'^r$. Moreover one can choose $\Phi$ such that the coefficients of $\Phi_*(Y_1),\dots,\Phi_*(Y_r)$ are of first degree in all the variables $x'^1,...,x'^n$.
\end{proposition}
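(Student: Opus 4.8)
The plan is to start from the canonical-form generators $Y_1,\dots,Y_r$ and the diffeomorphism $\Phi$ produced by Proposition \ref{theorem_solvable_coordinate}, and then show that the extra hypothesis $[[\mathcal{G},\mathcal{G}],[\mathcal{G},\mathcal{G}]]=0$ forces the matrix functions $G^h_k$ appearing in Definition \ref{definition_solvable_coordinate} to be affine in $x'^1,\dots,x'^r$. First I would set $\mathcal{G}'=[\mathcal{G},\mathcal{G}]$ and recall that, by solvability and the construction in \cite{DMU2}, one may take the first block of generators (say $Y_1,\dots,Y_{i_1}$) to be the coordinate vector fields $\partial_{x'^1},\dots,\partial_{x'^{i_1}}$ spanning (a complement inside) the derived algebra's acting directions, while the remaining $Y$'s project to the coordinate fields on the quotient. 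The point is that $\Phi_*(\mathcal{G}')$ consists of vector fields whose coefficients depend only on the ``lower'' variables in a controlled way, and the second-derived-algebra condition says $[\Phi_*(\mathcal{G}'),\Phi_*(\mathcal{G}')]=0$, i.e. $\Phi_*(\mathcal{G}')$ is an abelian algebra of vector fields each of which is, in the canonical coordinates, a constant-coefficient field in the directions $\partial_{x'^1},\dots,\partial_{x'^{i_1}}$ (this is exactly what the canonical form gives for the bottom block).

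The key computation is then the bracket relation $[Y_a, Y_b] \in \mathcal{G}'$ for $Y_a$ in an upper block and $Y_b=\partial_{x'^k}$ in the first block: writing $Y_a = \partial_{x'^{(a)}} + \sum_k (G_a)^k(x')\,\partial_{x'^k}$ with the sum over the first-block indices, one gets $[\partial_{x'^k}, Y_a] = \sum_\ell \partial_{x'^k}\big((G_a)^\ell\big)\,\partial_{x'^\ell}$, and this must lie in $\Phi_*(\mathcal{G}')$, hence have constant coefficients. Therefore $\partial_{x'^k}\big((G_a)^\ell\big)$ is constant in all variables, which means each $(G_a)^\ell$ is affine in $x'^1,\dots,x'^{i_1}$ with constant coefficients — and one iterates this argument block by block (using that the inner derived algebra kills precisely the ``nonlinear'' obstruction at each level) to conclude that every $G^h_k$ is of first degree in $x'^1,\dots,x'^r$. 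I would organize this as an induction on the block index $l$ in Definition \ref{definition_solvable_coordinate}, the base case being the bottom (constant) block and the inductive step being the bracket identity above. Consequently the coefficients $Y^i_j = Y_j(x'^i)$ are polynomials of first degree in $x'^1,\dots,x'^r$, which is the first assertion.

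For the second, stronger assertion — first degree in \emph{all} variables $x'^1,\dots,x'^n$ — I would observe that the residual freedom in choosing $\Phi$ is precisely a change of the complementary (quotient) coordinates $x'^{r+1},\dots,x'^n$, and one can use this freedom to absorb the dependence of the $G^h_k$ on those variables: since the $G^h_k$ are already affine in $x'^1,\dots,x'^r$ with coefficients that are functions of $x'^{r+1},\dots,x'^n$, and the abelian-derived-algebra condition constrains how those coefficients can vary, a suitable straightening of the transverse coordinates (essentially integrating the flows of the quotient-directed fields, which commute appropriately because of the $[[\mathcal{G},\mathcal{G}],[\mathcal{G},\mathcal{G}]]=0$ hypothesis) turns the coefficient functions into affine functions of the transverse variables as well. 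The main obstacle I anticipate is exactly this last step: making precise that the remaining coordinate freedom suffices to linearize the $x'^{r+1},\dots,x'^n$-dependence simultaneously for all blocks, which requires checking a compatibility (closedness/Frobenius-type) condition; the hypothesis on the second derived algebra is what guarantees that compatibility, but spelling it out cleanly — rather than the block-by-block bracket computation, which is routine — is where the real work lies.
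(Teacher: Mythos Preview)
Your argument for the first assertion is essentially the paper's: in canonical form the generators $Y_1,\dots,Y_k$ of $\mathcal{G}^{(1)}=[\mathcal{G},\mathcal{G}]$ become the constant fields $\partial_{x'^1},\dots,\partial_{x'^k}$, and then the bracket $[\partial_{x'^i},\Phi_*(Y_j)]$ for $j>k$ must lie in $\Phi_*(\mathcal{G}^{(1)})$, forcing $\partial_{x'^i}(G^l_j)$ to be constant and hence each $G^l_j$ affine in $x'^1,\dots,x'^k$. One remark: your block-by-block induction is superfluous, because the hypothesis $\mathcal{G}^{(2)}=0$ forces the canonical form to have at most two blocks ($l\le 2$); the paper accordingly treats only the single nontrivial level and observes directly, from the canonical form together with $[Y_j,\mathcal{G}^{(1)}]\subset\mathcal{G}^{(1)}$, that $\Phi_*(Y_{k+1}),\dots,\Phi_*(Y_r)$ are independent of $x'^{k+1},\dots,x'^r$.

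For the second assertion your route is more complicated than necessary, and you rightly identify the linearization in the transverse variables $x'^{r+1},\dots,x'^n$ as the hard part. The paper sidesteps this entirely: it invokes the well-known fact that when $Y_1,\dots,Y_r$ are regular and generate an integrable distribution (here a finite-dimensional Lie algebra acting locally freely), one may choose local coordinates in which their coefficients do \emph{not depend} on $x'^{r+1},\dots,x'^n$ at all. Independence is of course a special case of ``first degree'', so once this coordinate choice is made the first part of the argument finishes the job. In particular the metabelian hypothesis $[[\mathcal{G},\mathcal{G}],[\mathcal{G},\mathcal{G}]]=0$ plays no role in this second step; you should eliminate the transverse dependence rather than try to linearize it.
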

\begin{proof}
Let us suppose that $Y_1,...,Y_k$ generates $\mathcal{G}^{(1)}=[\mathcal{G},\mathcal{G}]$. Then $\Phi^*(Y_i)=(\delta^l_i)$ for $i=1,...,k$. Using the fact that $[Y_i,\mathcal{G}^{(1)}] \subset \mathcal{G}^{(1)}$ and the fact that $\Phi_*(Y_1),...,\Phi_*(Y_r)$ are in canonical form, we must have that $\Phi_*(Y_{k+1}),...,\Phi_*(Y_r)$ do not depend on $x'^{k+1},...,x'^{r}$ and  their coefficients must be of first degree in $x'^1,...,x'^r$. \\
The second part of the proposition follows from the well known fact that when the vector fields $Z_1,...,Z_r$ generate an integrable distribution, it is possible to choose a local coordinate system such that the coefficients of $Z_1,...,Z_r$ do not depend on $x'^{r+1},...,x'^n$.
\end{proof}

\section{General  linear SDEs}\label{section_linear}

We first consider the one-dimensional linear SDE
\begin{equation}\label{equation_linear}
dX_t=(a X_t + b) dt+ (c X_t+ d)dW_t,
\end{equation}
where $a,b,c,d \in \mathbb{R}$ and we apply the procedure previously presented
in order to obtain a symmetry adapted discretization scheme.\\
Although  it is possible to prove that equation \refeqn{equation_linear} for $ad-bc \not =0$ does not admit strong symmetries (see \cite{DMU}), we can look at equation \refeqn{equation_linear} as a part of a two dimensional system  admitting Lie symmetries. \\
Let us consider the system
\begin{equation} \label{equation_linear2}
\left(\begin{array}{c}
dX_t\\
dZ_t\end{array}\right)=
\left(\begin{array}{c}
aX_t+b\\
aZ_t \end{array} \right)dt+\left(
\begin{array}{c}
cX_t+d\\
cZ_t \end{array}\right)dW_t,
\end{equation}
on $\mathbb{R} \times \mathbb{R}_+=M$, consisting of the original linear equation and the associated homogeneous one.
It is simple to prove, by solving the determining equations \refeqn{determining_equation1} and \refeqn{determining_equation2}, that the system \refeqn{equation_linear2} admits the following two strong symmetries:
\begin{eqnarray*}
Y_1&=&\left(\begin{array}{c}
z\\
0
\end{array}\right)\\
Y_2&=&\left(\begin{array}{c}
0\\
z
\end{array}\right).
\end{eqnarray*}
The more general adapted coordinate system system  for the
symmetries $Y_1,Y_2$ is given by
$$\Phi(x,z)=\left(\begin{array}{c}
\frac{x}{z}+f(z)\\
\log(z)+l
\end{array} \right),$$
where $l \in \mathbb{R}$ and $f:\mathbb{R}_+ \rightarrow
\mathbb{R}$ is a smooth function. Indeed in the coordinate system
$(x',z')^T=\Phi(x,z)$ we have that
\begin{eqnarray*}
Y'_1&=&\Phi_*(Y_1)=\left( \begin{array}{c}
1\\
0
\end{array}\right),\\
Y'_2&=&\Phi_*(Y_2)=\left( \begin{array}{c}
-x'+e^{z'-l}\partial_z(f)(e^{z'-l})+f(e^{z'-l})\\
1
\end{array}\right).
\end{eqnarray*}
In order to guarantee that  the Euler and Milstein discretization schemes are invariant, by Theorem \ref{theorem_invariant} it is sufficient to choose $f(z)=-\frac{k}{z}$ for some constant $k$.\\
In the new coordinates the original two dimensional SDE becomes
\begin{eqnarray}
dX'_t&=&\left(\left(b-cd+ak-c^2k\right)e^{-Z_t'+l}\right)dt+(d+ck)e^{-Z'_t+l}dW_t\label{equation_linear3}\\
dZ'_t&=&\left(a-\frac{c^2}{2}\right)dt+c dW_t.\label{equation_linear4}
\end{eqnarray}
In the following, for simplicity, we consider the discretization scheme only for $l=0$.
The Euler integration scheme becomes:
\begin{eqnarray*}
\left(\begin{array}{c}
Z'_n\\
X'_n\end{array}\right)&=&
\left(\begin{array}{c}
Z'_{n-1}\\
X'_{n-1}\end{array} \right)+
\left(\begin{array}{c}
\left(a-\frac{c^2}{2}\right)\\
\left(b-cd+ak-c^2k\right)e^{-Z'_{n-1}}
\end{array} \right)\Delta t_n+\\
&&+\left(
\begin{array}{c}
c\\
(d+ck)e^{-Z'_{n-1}}
\end{array}\right)\Delta W_n,
\end{eqnarray*}
and the Milstein scheme:
\begin{eqnarray*}
\left(\begin{array}{c}
Z'_n\\
X'_n\end{array}\right)&=& \left(\begin{array}{c}
Z'_{n-1}\\
X'_{n-1}\end{array} \right)+ \left(\begin{array}{c}
\left(a-\frac{c^2}{2}\right)\\
\left(b-\frac{1}{2}cd+ak-\frac{c^2k}{2}\right)e^{-Z'_{n-1}}
\end{array} \right)\Delta t_n+\\
&&+\left(
\begin{array}{c}
c\\
(d+ck)e^{-Z'_{n-1}}
\end{array}\right)\Delta W_n+ \left(\begin{array}{c}
0\\
-(cd+c^2k)e^{-Z'_{n-1}}
\end{array}\right)\frac{(\Delta W_n)^2}{2}
\end{eqnarray*}
We note that when $k=-\frac{d}{c}$ the
two discretization schemes coincide.\\
Coming back to the original problem, in the Euler case we get:
\begin{equation}\label{equation_discretization1}
 X_n=\exp\left(\left(a-\frac{c^2}{2}\right)\Delta t_n+c\Delta W_n\right)\cdot[X_{n-1}+(b-cd+ak-c^2k)\Delta t_n+(d+ck)\Delta W_n-k]+k
\end{equation}
and in the Milstein case we obtain:
\begin{equation}\label{equation_discretization2}
\begin{array}{rcl}
X_n&=&\exp\left(\left(a-\frac{c^2}{2}\right)\Delta t_n+c\Delta W_n\right)\cdot\left[X_{n-1}+\left(b+ak-\frac{cd+c^2k}{2}\right)\Delta t_n+\right.\\
&&\left.+(d+ck)\Delta W_n-\frac{(cd+c^2k)}{2}(\Delta W_n)^2-k\right]+k.
\end{array}
\end{equation}

\begin{remark}\label{remark_explicit}
There is a deep connection between equations \refeqn{equation_discretization1} and \refeqn{equation_discretization2} and the well-known
integration formula for scalar linear SDEs. Indeed the equation \refeqn{equation_linear} admits as solution
\begin{equation}\label{equation_explicit}
X_t=\Phi_t\left(X_0+\int_0^t{\frac{b-cd}{\Phi_s}ds}+\int_0^t{\frac{d}{\Phi_s}dW_s}\right)
\end{equation}
where
$$\Phi_t=\exp\left(\left(a-\frac{c^2}{2}\right)t+cW_t\right).$$
Equation \refeqn{equation_discretization1} and \refeqn{equation_discretization2} can be viewed as the equations obtained by expanding the
integrals in formula \refeqn{equation_explicit} according with stochastic Taylor's Theorem (see \cite{Kloeden}). This fact should not surprise
since the adapted coordinates obtained in Subsection \ref{subsection_adapted} were  introduced exactly to obtain formula
\refeqn{equation_explicit} from equation \refeqn{equation_linear2}. Since the discretizations schemes \refeqn{equation_discretization1} and
\refeqn{equation_discretization2} are closely linked with the exact solution formula of linear SDEs we call them \emph{exact methods} (or exact
discretizations) for the numerical simulation of  linear SDEs.
\end{remark}

Let us now consider the following two dimensional SDE

\begin{eqnarray*}
\left(\begin{array}{c}
dX_t\\
dY_t
\end{array}\right)&=&\left[\alpha\left(\begin{array}{c}
X_t\\
Y_t \end{array}
\right)+ \beta \left(\begin{array}{c}
-Y_t\\
X_t \end{array}
\right)+
\left(\begin{array}{c}
c_1\\
c_2 \end{array}
\right)  \right]dt+\\
&&
+\left[\sigma\left(\begin{array}{c}
X_t\\
Y_t \end{array}
\right)+ \left(\begin{array}{c}
d_1\\
d_2 \end{array}
\right)\right] dW^1_t+
\left[\sigma'\left(\begin{array}{c}
-Y_t\\
X_t \end{array}
\right)+
\left(\begin{array}{c}
e_1\\
e_2 \end{array}
\right)  \right]dW^2_t
\end{eqnarray*}
The previous equation can be solved explicitly. In particular the homogeneous linear part has solution given by (see, e.g. \cite{EckhardPlaten2010})
\begin{eqnarray*}
\Phi_{t,t'}&=&e^{\left(\mu-\frac{\sigma^2}{2} \right)(t-t')+\sigma (W^1_t-W^1_{t'})} \left( 
\begin{array}{c}
\cos(\beta(t-t')+\sigma'(W^2_t-W^2_{t'})) \\
\sin(\beta(t-t')+\sigma'(W^2_t-W^2_{t'})) 
\end{array}\right.\\
&&\left. \begin{array}{c}
-\sin(\beta(t-t')+\sigma'(W^2_t-W^2_{t'}))\\
\cos(\beta(t-t')+\sigma'(W^2_t-W^2_{t'}))

\end{array}\right),
\end{eqnarray*}
where $\mu=\alpha+\frac{\sigma'^2}{2}$. Thus the solution of the initial equation is 
\begin{eqnarray*}
\left(\begin{array}{c}
X_t\\
Y_t
\end{array}\right)&=&\Phi_{t,0}\cdot \left(\begin{array}{c}
X_0\\
Y_0
\end{array}\right)+ \Phi_{t,0} \cdot \left( \int_0^t{(\Phi_{s,0})^{-1}\cdot\left(\begin{array}{c}
c_1-\sigma d_1+\sigma' e_2\\
c_2-\sigma d_2-\sigma' e_1
\end{array}\right)ds}\right.+\\
&&+\left. \int_0^t{(\Phi_{s,0})^{-1}\cdot\left(\begin{array}{c}
d_1\\
d_2
\end{array}\right)dW^1_t}+\int_0^t{(\Phi_{s,0})^{-1}\cdot\left(\begin{array}{c}
e_1\\
e_2
\end{array}\right)dW^2_t}\right)
\end{eqnarray*}
The Euler discretization of the previous equation becomes:
\begin{eqnarray*}
\left(\begin{array}{c}
X_{t_n}\\
Y_{t_n}
\end{array}\right)&=&\Phi_{t_n,t_{n-1}} \cdot \left(\left(\begin{array}{c}
X_{t_{n-1}}\\
Y_{t_{n-1}}
\end{array}\right)+\left(\begin{array}{c}
c_1-\sigma d_1+\sigma' e_2\\
c_2-\sigma d_2-\sigma' e_1
\end{array}\right) \Delta t_n+ \right. \\
&&+\left.
\left(\begin{array}{c}
d_1\\
d_2
\end{array}\right)\Delta W^1_n
+
\left(\begin{array}{c}
e_1\\
e_2
\end{array}\right)\Delta W^2_n
\right) ,
\end{eqnarray*}
where $\Delta t_n=t_n-t_{n-1}$ and $\Delta W^i_n=W^i_{t_n}-W^i_{t_{n-1}}$.

\section{Theoretical estimation of the numerical forward error for linear
SDEs}\label{section_estimate}

We provide an explicit estimation of the forward error associated with the exact numerical schemes proposed in the previous section for simulating a
general linear SDE. The  explicit solution of a  linear SDE is well known and the use of the resolutive formula for its simulation is extensively used, but in the literature, to the best of our knowledge,  there is no explicit estimation of the forward error.\\

\subsection{Enunciates of the Theorems}

Dividing $[0,T]$ in $N$ parts we obtain  $N+1$ instants $t_0=0,t_n=nh,t_N=T$, with $h=\frac{T}{N}$. We denote by $X^{N,T}_t$ the approximate
solution given by exact Euler method, $\bar{X}^{N,T}_t$ the approximate solution with respect to exact Milstein
method and by $X_t$ the exact solution to the linear SDE.  In the following we will omit $T$ where it is possible.\\

\begin{theorem}\label{theorem_1}
For all $t,T \in \mathbb{R}, t\in [0,T]$, we have
$$\epsilon_N=\left(\mathbb{E}[(X_t-X^{N,T}_t)^2]\right)^{1/2}\leq f(T) g(h) h^{1/2},$$
where $h=\frac{T}{N}$, $g$ is a continuous function and $f$ is a strictly positive continuous function such that for $x \rightarrow + \infty$
\begin{eqnarray*}
f(x)=O(1) & \text{ if } & a<-c^2/2\\
f(x)=O(x) & \text{ if } & a=-c^2/2\\
f(x)=O(e^{C(a,c)x}) & \text{ if } & a>-c^2/2,
\end{eqnarray*}
with $C(a,c) \in \mathbb{R}_+$.
\end{theorem}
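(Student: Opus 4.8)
The plan is to exploit the explicit representations of both the exact solution and the numerical scheme. From Remark \ref{remark_explicit} and equation \refeqn{equation_discretization1}, both $X_t$ and $X^{N,T}_t$ are affine functions of the auxiliary process $Z'$ (and its discretization $Z'_n$). Concretely, writing $\Phi_t=\exp((a-c^2/2)t+cW_t)$, the exact solution is $X_t=k+\Phi_t\left(X_0-k+\int_0^t\frac{b-cd+ak-c^2k}{\Phi_s}ds+\int_0^t\frac{d+ck}{\Phi_s}dW_s\right)$ for any choice of the free constant $k$ (this is \refeqn{equation_explicit} rewritten after the shift by $k$), while iterating \refeqn{equation_discretization1} gives a telescoping product: $X^{N,T}_t-k=\exp\!\big((a-c^2/2)t+cW_t\big)\big[(X_0-k)+\sum_{n}\big(\text{first-order Taylor increments of the two integrals on } [t_{n-1},t_n]\big)\big]$, because the prefactors $\exp((a-c^2/2)\Delta t_n+c\Delta W_n)$ multiply out to exactly $\Phi_t$. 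Hence the error $X_t-X^{N,T}_t$ equals $\Phi_t$ times the difference between the true Ito integrals $\int_0^t\frac{b-cd+ak-c^2k}{\Phi_s}ds+\int_0^t\frac{d+ck}{\Phi_s}dW_s$ and their stochastic-Taylor (left-endpoint / Euler) approximations.

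First I would fix a convenient value of $k$ (e.g. $k=-d/c$ when $c\neq0$, which kills the $dW$ integrand and makes Euler and Milstein coincide; the general case is analogous with one extra term), so that the error reduces to $\Phi_t$ times $\sum_n \int_{t_{n-1}}^{t_n}\big(\psi(s)-\psi(t_{n-1})\big)\,(\,ds\text{ or }dW_s\,)$ with $\psi(s)=1/\Phi_s$ up to constants. Then I would compute $\mathbb{E}[(X_t-X^{N,T}_t)^2]$ by expanding the square, using the Ito isometry on the stochastic-integral piece and Cauchy–Schwarz on the Lebesgue piece, and crucially using the independence structure of Brownian increments: cross-terms from different subintervals vanish (for the $dW$ part) or are controlled (for the $ds$ part), so the variance is a sum over $n$ of local contributions of size $O(h^3)$ each, giving $O(N h^3)=O(T h^2)$ before the $\Phi_t$ factor. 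The increments $\psi(s)-\psi(t_{n-1})$ are themselves handled by Ito's formula applied to $1/\Phi_s=\exp(-(a-c^2/2)s-cW_s)$, which is again a geometric-Brownian-type process, so all the moments $\mathbb{E}[(\psi(s)-\psi(t_{n-1}))^2]$ are explicit exponentials in $s$ and $t_{n-1}$.

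The key point producing the trichotomy in $f$ is the bookkeeping of the exponential moments: we need $\mathbb{E}[\Phi_t^2 \cdot (\text{increment})^2]$, and since $\Phi_t^2=\exp(2(a-c^2/2)t+2cW_t)$ while the increments involve $1/\Phi_s$, the products telescope to factors like $\exp\big((2a+c^2)\,(t-s)\big)$ or, after integrating $s$ over $[0,t]$, to $\int_0^t \exp\big((2a+c^2)(t-s)\big)\,ds$ (and similar with the drift). This integral is $O(1)$ when $2a+c^2<0$, i.e. $a<-c^2/2$; it is $O(t)$ when $2a+c^2=0$, i.e. $a=-c^2/2$; and it is $O(e^{(2a+c^2)t})$ when $a>-c^2/2$, which upon taking the square root yields $f(x)=O(e^{C(a,c)x})$ with $C(a,c)=a+c^2/2>0$. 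Collecting, $\big(\mathbb{E}[(X_t-X^{N,T}_t)^2]\big)^{1/2}\le f(T)\,g(h)\,h^{1/2}$ with $g$ continuous (absorbing the $O(h)$ from $N h^2$ and any bounded-in-$h$ remainder terms) and $f$ as claimed.

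I expect the main obstacle to be the careful moment estimate: because $\Phi$ and $1/\Phi$ have all exponential moments but the scheme is genuinely non-Lipschitz in the original coordinates, one cannot invoke the standard Gronwall-type convergence arguments, and must instead track the exact exponential rates through the sum over subintervals — in particular verifying that the cross-terms between subintervals for the Lebesgue-integral part do not destroy the $O(Th^2)$ bound and that the constant multiplying $e^{C(a,c)T}$ is finite and $h$-independent. The case $c=0$ (additive-noise linear SDE, Ornstein–Uhlenbeck-type) should be treated separately or recovered as a limit, and the Milstein bound, if needed, follows by the same computation with the extra $(\Delta W_n)^2/2$ term, which only improves the local order and changes $g$ but not $f$.
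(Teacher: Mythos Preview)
Your overall strategy matches the paper's: write the error as $\Phi_T$ times the discrepancy between the exact integrals $\int_0^T (b-cd)\Phi_s^{-1}\,ds$, $\int_0^T d\,\Phi_s^{-1}\,dW_s$ and their left-endpoint sums, then exploit the multiplicative structure $\Psi_{s,T}=\Phi_T\Phi_s^{-1}$ together with independence of $\Psi$ over disjoint time intervals to obtain the trichotomy via $\int_0^T e^{(2a+c^2)(T-t)}\,dt$ (equivalently $G_1,G_2$ in the paper). For the Lebesgue part $I_1$ your sketch is essentially what the paper does; in fact the paper avoids cross-terms altogether by the triangle inequality $\|\sum_i\cdot\|_2\le\sum_i\|\cdot\|_2$, which is simpler than what you outline.

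The genuine gap is in the stochastic-integral part $I_2$. You propose to ``use the Ito isometry'' and assert that ``cross-terms from different subintervals vanish (for the $dW$ part)'', but neither step is available as stated: the would-be integrand $\Psi_{s,T}=\Phi_T/\Phi_s$ is \emph{not} $\mathcal{F}_s$-adapted, so $\Phi_T\int_0^T\Phi_s^{-1}\,dW_s$ cannot be treated as an Ito integral of an adapted process, and the common factor $\Phi_T^2$ in $\mathbb{E}[\Phi_T^2\, e_n e_m]$ correlates the subintervals, so those cross-terms are genuinely nonzero. Choosing $k=-d/c$ kills $I_2$, but that proves the theorem only for the one scheme where Euler and Milstein coincide; your remark that ``the general case is analogous with one extra term'' is exactly where the real work sits. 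The paper's device is to rewrite $I_2$ as a \emph{backward} stochastic integral (Lemma~\ref{lemma_backward1}): since $\Psi_{s,T}$ is $\mathcal{F}^s$-measurable, one has
\[
\Phi_T\int_0^T\Phi_s^{-1}\,dW_s=\int_0^T\Psi_{s,T}\,d^+W_s+c\int_0^T\Psi_{s,T}\,dt,
\]
and the Ito isometry for the backward integral then supplies precisely the decoupling and the factor $\mathbb{E}[\Psi_{t_i,T}^2]=e^{(2a+c^2)(T-t_i)}$ you want. Without this reformulation (or the brute-force double-sum computation in the paper's Appendix, where the nonzero cross-terms are shown to be $O(h^4)$ each), the $L^2$ estimate on $I_2-I_2^N$ does not close.
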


\begin{theorem}\label{theorem_2}
For all $t,T \in \mathbb{R}, t\in [0,T]$, we have that
$$\bar{\epsilon}_N=\mathbb{E}[|X_t-\bar{X}^{N,T}_t|]\leq \bar{f}(T) \bar{g}(h) h^{1/2},$$
where $h=\frac{T}{N}$, $\bar{g}$ is a continuous function and $f$ is a strictly positive continuous function such that for $x \rightarrow + \infty$
\begin{eqnarray*}
\bar{f}(x)=O(1) & \text{ if } & a <0\\
\bar{f}(x)=O(e^{C'(a,c)x}) & \text{ if } & a \geq 0,
\end{eqnarray*}
with $C'(a,c) \in \mathbb{R}_+$.
\end{theorem}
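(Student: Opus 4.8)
The plan is to prove both theorems simultaneously, since they share the same structure: reduce the forward error of the exact schemes to two contributions — a \emph{local discretization error} coming from truncating the stochastic Taylor expansion of the integrals in formula \refeqn{equation_explicit}, and an \emph{exponential amplification factor} coming from the multiplicative factor $\Phi_t=\exp((a-c^2/2)t+cW_t)$ — and then control each with sharp (non-Gronwall) constants. First I would write the exact solution at the grid points using \refeqn{equation_explicit} in the multiplicative-recursive form $X_{t_n}=\Phi_{t_{n-1},t_n}\bigl(X_{t_{n-1}}+\int_{t_{n-1}}^{t_n}\tfrac{b-cd}{\Phi_{t_{n-1},s}}ds+\int_{t_{n-1}}^{t_n}\tfrac{d}{\Phi_{t_{n-1},s}}dW_s\bigr)$, where $\Phi_{s,t}=\exp((a-c^2/2)(t-s)+c(W_t-W_s))$, and compare it term by term with the numerical recursion \refeqn{equation_discretization1} (resp. \refeqn{equation_discretization2}). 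Subtracting, the one-step increment of the error $e_n:=X_{t_n}-X^{N,T}_{t_n}$ satisfies a linear recursion $e_n=\Phi_{t_{n-1},t_n}\,e_{n-1}+R_n$, where $R_n$ is the local remainder: for Euler it is the difference between the exact integrals and their order-$h^{1/2}$/$h$ Taylor stubs, of size $O(h^{3/2})$ in $L^2$ per step after the factor is accounted for, and for Milstein the extra $(\Delta W_n)^2$ term removes the leading martingale part of the remainder, reducing it to $O(h^{2})$ (or $O(h^{3/2})$ in the drift part), which is exactly why Theorem \ref{theorem_2} can be stated in $L^1$ with an extra half power.

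Next I would solve the recursion explicitly: $e_N=\sum_{n=1}^N \Bigl(\prod_{j=n+1}^N \Phi_{t_{j-1},t_j}\Bigr) R_n = \sum_{n=1}^N \Phi_{t_n,T}\,R_n$, using the multiplicative cocycle property $\prod_{j=n+1}^N\Phi_{t_{j-1},t_j}=\Phi_{t_n,T}$. Taking $L^2$ norms (for Theorem \ref{theorem_1}) one uses independence of increments and the martingale/orthogonality structure of the $R_n$ to get $\mathbb{E}[e_N^2]\le \bigl(\sum_n \|\Phi_{t_n,T}R_n\|_2\bigr)^2$ or, better, $\sum_n \mathbb{E}[\Phi_{t_n,T}^2]\,\mathbb{E}[R_n^2/\Phi_{t_{n-1},t_n}^2]$-type bounds after conditioning; the key elementary computation is $\mathbb{E}[\Phi_{s,t}^2]=\exp((2a+c^2)(t-s))=\exp((2a-(-c^2))(t-s))$... more precisely $\mathbb{E}[\Phi_{s,t}^p]=\exp\bigl(p(a-c^2/2)(t-s)+\tfrac12 p^2 c^2 (t-s)\bigr)$, so the sign of the exponent for $p=2$ is governed by $2(a-c^2/2)+2c^2 = 2(a+c^2/2)$, i.e. by whether $a<-c^2/2$, $a=-c^2/2$ or $a>-c^2/2$ — exactly the trichotomy in Theorem \ref{theorem_1}. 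Summing the geometric-type series $\sum_{n=1}^N \exp((2a+c^2)(T-t_n))\cdot O(h^3)\cdot h^{-1}$ gives: a bounded sum (hence $f=O(1)$) when the ratio is $<1$, a sum of size $O(N)=O(T/h)$ (hence $f=O(T)$) when the ratio is $1$, and a geometrically growing sum $O(e^{CT})$ otherwise. For Theorem \ref{theorem_2} the relevant moment is $p=1$: $\mathbb{E}[\Phi_{s,t}]=\exp(a(t-s))$, so the dichotomy is governed by the sign of $a$, matching the statement, and the extra half power of $h$ in the local error is what keeps the bound at order $h^{1/2}$ rather than $h^{-1/2}$ after summation. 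In both cases one then sets $f(T)$ (resp. $\bar f(T)$) to be the resulting explicit continuous function of $T$ and bundles the $h$-dependent but $T$-independent constants into $g(h)$ (resp. $\bar g(h)$), checking continuity and strict positivity.

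The main obstacle — and the point the authors themselves flag in the introduction — is that the coordinate change $\Phi(x,z)=(x/z-k/z,\ \log z)$ is \emph{strongly non-Lipschitz} (it involves $\log z$ and $1/z$), so one cannot invoke the standard convergence theorem (Theorem \ref{theorem_convergence}) nor any Gronwall-type a priori bound on moments of the numerical scheme; everything must be done by hand on the explicit formulas. Concretely, the delicate estimates are: (i) bounding the $L^2$ (resp. $L^1$) size of the local remainders $R_n$, which requires controlling negative and positive moments of $\Phi_{t_{n-1},s}^{-1}$ uniformly in $s\in[t_{n-1},t_n]$ together with the stochastic-Taylor remainder estimates from \cite{Kloeden} — this is where the precise powers $h^{3/2}$ vs $h^2$ must be earned; and (ii) making the conditioning/independence argument rigorous so that the cross terms in $\|\sum_n \Phi_{t_n,T}R_n\|^2$ either vanish (martingale part) or are controlled by Cauchy--Schwarz without losing the sharp exponential rate. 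A secondary technical annoyance is the special case $k=-d/c$ (and more generally the behaviour when $c=0$, the purely additive-noise Ornstein--Uhlenbeck-type case), where the two schemes coincide and some of the remainder terms vanish identically; these should be handled as easy sub-cases or absorbed into the general estimate. Once the remainder bounds and the exact moment formula $\mathbb{E}[\Phi_{s,t}^p]=\exp\bigl(p(a-c^2/2)(t-s)+\tfrac12 p^2c^2(t-s)\bigr)$ are in place, the trichotomy/dichotomy and the summation of the geometric series are routine, and the constants $C(a,c)$ and $C'(a,c)$ can be read off as $C(a,c)=(2a+c^2)_+$ and $C'(a,c)=a_+$ respectively (up to the precise normalisation the authors choose).
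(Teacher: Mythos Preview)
Your proposal is correct and shares the paper's overall architecture: write the global error as $e_N=\sum_{n}\Psi_{t_n,T}\,R_n$ with $\Psi_{s,t}=\Phi_t\Phi_s^{-1}$, separate the amplification factor $\Psi_{t_n,T}$ from the local remainder $R_n$, bound the latter by a stochastic Taylor estimate of order $h^{3/2}$, and sum the resulting (geometric) series whose rate is governed by moments of $\Psi_{t_n,T}$, giving the dichotomy on the sign of $a$.

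The execution differs in two places, and the differences are worth noting. First, to peel off $\Psi_{t_n,T}$ in $L^1$ you use independence of increments directly, obtaining exactly $\mathbb{E}[\Psi_{t_n,T}]=e^{a(T-t_n)}$; the paper instead applies H\"older with conjugate exponents $(\alpha,2n)$, $1<\alpha<2$ chosen so that $\|\Psi_{t_i,T}\|_\alpha=\exp\bigl((a+\tfrac{c^2}{2}(\alpha-1))(T-t_i)\bigr)$ still decays. Your route is sharper and simpler; the paper's is tied to its moment lemma (Lemma~\ref{lemma_estimate}), which is stated for even integer exponents. Second, for the stochastic-integral part of the local remainder you plan to combine Cauchy--Schwarz on the product $\Phi_{t_{n-1},t_n}\cdot(J_n-J_n^N)$ with the Kloeden--Platen Taylor remainder bounds; the paper instead uses the backward-integral identity (Lemma~\ref{lemma_backward1}) to move the anticipating factor $\Phi_{t_i}$ inside the It\^o integral, after which the remainder becomes an explicit function of $(h,W_h)$ to which Lemma~\ref{lemma_estimate} applies directly. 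Both approaches give the $O(h^{3/2})$ local bound; the backward-integral route is what lets the authors display fully explicit constants in the Appendix, while yours is more streamlined but less explicit. One small inaccuracy: the Milstein one-step remainder is $O(h^{3/2})$ in $L^p$, not $O(h^2)$ --- it is the \emph{mean} of the remainder that is $O(h^2)$ --- but this does not affect your $L^1$ summation, which correctly yields the stated $h^{1/2}$ rate.
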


Before giving the proof of the two previous theorems we propose some remarks. We recall that a  linear SDE with $ad-bc \not =0$ has an equilibrium
distribution if and only if $a-\frac{c^2}{2} < 0$. Furthermore the equilibrium distribution admits a finite first moment if and only if
$a <0$ and  a finite second moment if and only if $a+\frac{c^2}{2} <0$.  Since we approximate the Ito integral up to the order $h^{1/2}$,
the three cases in Theorem \ref{theorem_1} follow from the fact that for giving an estimate of the error in Euler discretization we need
a second moment control. More precisely we can expect a bounded error with respect to $T$ only when the second moment is finite as
$T \rightarrow + \infty$.\\
Since in the Milstein case  a finite first moment sufficies, in the second theorem we obtain that the error does not grow with $T$ when $a < 0$.
 We can obtain an analogous estimate for the Euler method when $d=0$, i.e. in the case in which the Milstein and Euler discretizations coincide (situation similar to the additive-noise-SDEs setting). The use  of only the first moment finitess  for estimating the error has a price: indeed we obtain an $h^{1/2}$ dependence of the error. We remark that the techniques used in the proof of Theorem \ref{theorem_2} exploit
   some ideas from the recent {\it rough path} integration theory (see e.g. \cite{Friz_Haire2014}), and in particular this circumstance explains
   the $\frac{1}{2}$ order of convergence. This fact induces us to conjecture that our proof probably works also in the general rough path framework
   (for example for fractional Brownian motion by following \cite{Friz2014}). If  in Theorem \ref{theorem_2} we do not require an uniform-in-time
    estimate,  we can apply the methods used  in the proof of Theorem \ref{theorem_1} for obtaining  an error convergence of order  $1$.\\
Essentially the above theorems prove that for $a+\frac{c^2}{2} < 0$ and for $ a <0$ respectively, our symmetry adapted discretization methods
are stable for any value of $h$.  In Section \ref{section_numerical} we give a comparison between the stability of the adapted-coordinates
schemes with respect to the standard Euler and Milstein ones, via  numerical simulations.\\
We conclude by noting that Theorem \ref{theorem_1} and Theorem \ref{theorem_2} cannot be deduced in a trivial way from the standard theorems
about the convergence of Euler and Milstein methods (such as Theorem \ref{theorem_convergence}). Indeed the Euler and Milstein discretizations
of equations \refeqn{equation_linear3} and \refeqn{equation_linear4} do not have Lipschitz coefficients. Furthermore even if a given
discretization $(X'_n,Z'_n)$ of the system composed by \refeqn{equation_linear3} and \refeqn{equation_linear4} should converge to the exact
solution in $L^2(\Omega)$, being the coordinate change $\Phi$ ( introduced in Section \ref{section_linear})  not globally Lipschitz, it
 does not imply that the transformed discretization $(X_n,Z_n)$ converges to the exact solution $(X,Z)$ of the equation \refeqn{equation_linear2}
 in $L^2(\Omega)$. Finally, as pointed out in Subsection \ref{subsection_integration}, Theorem \ref{theorem_convergence} does not guarantee
 an uniform-in-time convergence as Theorem \ref{theorem_1} and Theorem \ref{theorem_2} instead state.

For  proving the theorems we need the following two lemmas. The second  allows to avoid very long calculations (see Appendix A).

\begin{lemma}\label{lemma_exponential}
Let $W_t$ be a Brownian motion, $\alpha,\beta \in \mathbb{R}$ and $n \in \mathbb{N}$ then for any $t \in \mathbb{R}_+$
$$\mathbb{E}[\exp(\alpha t+\beta W_t)W_t^n],$$
is a continuous function of $t$ and in particular it is locally bounded.
Moreover we have that
$$\mathbb{E}[\exp(\alpha t+\beta W_t)]=\exp{\left(\alpha+\frac{\beta^2}{2}\right)t}.$$
\end{lemma}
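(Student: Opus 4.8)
The plan is to reduce the whole statement to an explicit Gaussian integral and then read off continuity, local boundedness and the moment generating function identity from the resulting closed form. Since $W_t$ has the same law as $\sqrt{t}\,Z$ with $Z\sim\mathcal N(0,1)$, I would first write $\mathbb{E}[\exp(\alpha t+\beta W_t)W_t^n]=e^{\alpha t}\,t^{n/2}\,\mathbb{E}[Z^n e^{\beta\sqrt{t}\,Z}]$, and observe that the second expectation equals $\frac{1}{\sqrt{2\pi}}\int_{\mathbb{R}}z^n e^{\beta\sqrt{t}\,z-z^2/2}\,dz$. Completing the square in the exponent, $\beta\sqrt{t}\,z-\tfrac12 z^2=\tfrac{\beta^2 t}{2}-\tfrac12(z-\beta\sqrt{t})^2$, and translating $z\mapsto z+\beta\sqrt{t}$ turns this into $e^{\beta^2 t/2}\,\mathbb{E}[(Z+\beta\sqrt{t})^n]$. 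Taking $n=0$ gives at once $\mathbb{E}[\exp(\alpha t+\beta W_t)]=e^{(\alpha+\beta^2/2)t}$, which is the second assertion.

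For general $n$ I would expand $\mathbb{E}[(Z+\beta\sqrt{t})^n]=\sum_{k=0}^n\binom{n}{k}(\beta\sqrt{t})^{\,n-k}\,\mathbb{E}[Z^k]$, using that $\mathbb{E}[Z^k]=0$ for $k$ odd and $(k-1)!!$ for $k$ even; since only even $k$ survive, the powers of $t$ produced by $t^{n/2}(\sqrt{t})^{n-k}$ are integers, so $t^{n/2}\,\mathbb{E}[(Z+\beta\sqrt{t})^n]=P_n(\beta,t)$ is a genuine polynomial in $\beta$ and $t$ (equivalently, a scaled Hermite polynomial obtained by differentiating $\mathbb{E}[e^{\beta W_t}]=e^{\beta^2 t/2}$ in $\beta$ a total of $n$ times, which is legitimate because $\beta\mapsto\mathbb{E}[e^{\beta W_t}]$ is entire). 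Thus $\mathbb{E}[\exp(\alpha t+\beta W_t)W_t^n]=\exp\!\big((\alpha+\tfrac{\beta^2}{2})t\big)\,P_n(\beta,t)$, which as a product of an exponential and a polynomial is a continuous function of $t\in\mathbb{R}_+$; and a continuous function on $\mathbb{R}_+$ is automatically locally bounded, completing the proof.

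If one wishes to sidestep the explicit computation, continuity and local boundedness can also be obtained directly by dominated convergence: for $t_k\to t$ inside a fixed interval $[0,T_0]$ the integrand $\exp(\alpha t_k+\beta W_{t_k})W_{t_k}^n$ converges almost surely by path-continuity of $W$, and it is dominated by $G:=e^{|\alpha|T_0}\exp\!\big(|\beta|\sup_{s\le T_0}|W_s|\big)\big(\sup_{s\le T_0}|W_s|\big)^n$, which is integrable because $\sup_{s\le T_0}|W_s|$ has Gaussian tails (reflection principle) and hence finite exponential moments of every order; local boundedness then follows from $|\mathbb{E}[\exp(\alpha t+\beta W_t)W_t^n]|\le\mathbb{E}[G]<\infty$ on $[0,T_0]$. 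There is no serious obstacle in this lemma: the only points that require a line of justification are the interchange of the limit (or the $\beta$-derivative) with the expectation and the integrability of the dominating variable $G$, both of which rest on the finiteness of exponential moments of $W_t$ and of $\sup_{s\le T_0}|W_s|$.
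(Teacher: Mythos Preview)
Your proof is correct and follows exactly the approach indicated in the paper, which merely says ``The proof is based on the fact that $W_t$ is a normal random variable with zero mean and variance equal to $t$.'' You have simply made that one-line sketch fully explicit via the Gaussian density and completing the square (and even supplied an alternative dominated-convergence argument), so there is nothing to correct or contrast.
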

\begin{proof}
The proof is based on the fact that $W_t$ is a normal random variable with zero mean and variance equal to $t$.
\end{proof}

\begin{lemma}\label{lemma_estimate}
Let $F:\mathbb{R}^2 \rightarrow \mathbb{R}$ be a smooth function
such that $F(0,0)=0$ and such that
$$\mathbb{E}\left[|\partial_t(F)(h,W_h)|^{\alpha} \right],\mathbb{E}[\partial_{w}(F)(h,W_h)],\mathbb{E}[|\partial_{ww}(F)(h,W_h)|^{\alpha}]< L(h),$$
for some $\alpha \in 2\mathbb{N}$, for any $h$ and for some
continuous function $L:\mathbb{R} \rightarrow \mathbb{R}_+$. Then
there exists an increasing function $C: \mathbb{R} \rightarrow
\mathbb{R}$ such that
$$\mathbb{E}[|F(h,W_h)|^{\alpha}]\leq C(h) h^{\alpha/2} .$$
If furthermore $\partial_{w}(F)(0,0)=0$ and
$$\mathbb{E}\left[|\partial_{www}(F)(h,W_h)|^{\alpha}
\right],\mathbb{E}[|\partial_{tw}(F)(h,W_h)|^{\alpha}]\leq L(h)$$
there exists an increasing function $C': \mathbb{R} \rightarrow \mathbb{R}$ such that
$$\mathbb{E}[|F(h,W_h)|^{\alpha}]\leq C'(h) h^{\alpha}.$$
\end{lemma}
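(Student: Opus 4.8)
The plan is to estimate $F(h,W_h)$ by Taylor-expanding $F$ around the origin and then bounding each resulting term in $L^{\alpha}(\Omega)$ using the hypotheses together with Lemma \ref{lemma_exponential}-type control of Gaussian moments. First I would write, by the fundamental theorem of calculus along the segment from $(0,0)$ to $(h,W_h)$ (or by a two-variable Taylor formula with integral remainder),
\begin{equation*}
F(h,W_h)=\int_0^1 \Big(h\,\partial_t(F)(sh,sW_h)+W_h\,\partial_w(F)(sh,sW_h)\Big)\,ds,
\end{equation*}
using $F(0,0)=0$. Then $|F(h,W_h)|^{\alpha}\le 2^{\alpha-1}\big(h^{\alpha}\int_0^1|\partial_t F(sh,sW_h)|^{\alpha}ds + |W_h|^{\alpha}\int_0^1 |\partial_w F(sh,sW_h)|^{\alpha}ds\big)$ by Jensen/H\"older applied to the $ds$-integral and convexity of $x\mapsto x^{\alpha}$. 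The term with $h^{\alpha}$ already carries the factor $h^{\alpha}\ge h^{\alpha/2}$ for small $h$ (and on any bounded $h$-interval absorbing the discrepancy into $C(h)$), provided $\mathbb E[|\partial_t F(sh,sW_h)|^{\alpha}]$ is controlled — which is where I would have to reduce the bound at the scaled point $(sh,sW_h)$ to the assumed bound at $(h,W_h)$; since $sW_h \stackrel{d}{=}W_{s^2 h}$ and we only assumed control at the diagonal point $(h,W_h)$, I would instead restate/prove the lemma reading the hypotheses as holding for all arguments of the form $(t,W_t)$, or simply Taylor-expand keeping $h$ fixed in the first slot, i.e. along $w\mapsto F(h,w)$ from $w=0$ to $w=W_h$ after first moving $F(0,0)\to F(h,0)$ along $t$. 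With the latter splitting, $F(h,0)=\int_0^h \partial_t F(\tau,0)\,d\tau$ and $F(h,W_h)-F(h,0)=\int_0^{W_h}\partial_w F(h,w)\,dw$, so the $\partial_t$ contribution is deterministic of size $\lesssim h\sup_{\tau\le h}|\partial_t F(\tau,0)|$, again $O(h)$.

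For the $W_h$-term I would integrate by parts once more in $w$: $\int_0^{W_h}\partial_w F(h,w)\,dw = W_h\,\partial_w F(h,W_h) - \int_0^{W_h} w\,\partial_{ww}F(h,w)\,dw$ — no, cleaner is to Taylor-expand $w\mapsto F(h,w)$ to first order with integral remainder: $F(h,W_h)-F(h,0)=\partial_w F(h,0)\,W_h+\int_0^{W_h}(W_h-w)\,\partial_{ww}F(h,w)\,dw$. The first piece, $\partial_w F(h,0)\,W_h$, has $L^{\alpha}$-norm $=|\partial_w F(h,0)|\,\mathbb E[|W_h|^{\alpha}]^{1/\alpha}\le |\partial_w F(h,0)|\,c_{\alpha}h^{1/2}$; and $|\partial_w F(h,0)|\le \mathbb E[\partial_w F(h,W_h)]$ is \emph{not} immediate, so here I would use that the hypothesis gives a bound on $\mathbb E[\partial_w F(h,W_h)]$, and since $\partial_w F$ need not be sign-definite I would instead bound $|\partial_w F(h,0)|$ by a second application of the fundamental theorem of calculus, $|\partial_w F(h,0)|\le |\partial_w F(h,W_h)|+\int_0^{|W_h|}|\partial_{ww}F(h,w)|\,dw$, take $L^{\alpha}$-norms, and use the hypotheses on $\mathbb E[|\partial_{ww}F|^{\alpha}]$ — this is the routine but slightly fiddly part, deferred to Appendix A exactly as the paper indicates. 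The remainder $\int_0^{W_h}(W_h-w)\partial_{ww}F(h,w)\,dw$ is bounded in absolute value by $W_h^2 \sup_{|w|\le|W_h|}|\partial_{ww}F(h,w)|$; after taking $L^{\alpha}$-norm and Cauchy--Schwarz this is $\lesssim \mathbb E[|W_h|^{4\alpha}]^{1/(2\alpha)}\cdot\big(\text{sup-moment of }\partial_{ww}F\big)^{1/\alpha}$, and $\mathbb E[|W_h|^{4\alpha}]^{1/(2\alpha)}=c'_{\alpha}h$, which is $O(h)=o(h^{1/2})$. Collecting the three contributions gives $\mathbb E[|F(h,W_h)|^{\alpha}]^{1/\alpha}\le C_1(h)h^{1/2}$ for an increasing continuous $C_1$, i.e. the first assertion.

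For the improved bound, the extra hypotheses $\partial_w F(0,0)=0$ and control of $\partial_{www}F,\partial_{tw}F$ let me carry the expansion of $\partial_w F$ one order further: $\partial_w F(h,0)=\partial_w F(0,0)+\int_0^h \partial_{tw}F(\tau,0)\,d\tau = \int_0^h\partial_{tw}F(\tau,0)\,d\tau = O(h)$, so the term $\partial_w F(h,0)\,W_h$ is now $O(h)\cdot O(h^{1/2})=O(h^{3/2})$, hence harmless; and the only remaining $O(h^{1/2})$-looking term, which came from $\partial_w F(h,0) W_h$, has disappeared. Re-examining the Taylor remainder, one further expands $\partial_{ww}F(h,w)=\partial_{ww}F(h,0)+\int_0^w\partial_{www}F(h,w')\,dw'$ inside $\int_0^{W_h}(W_h-w)\partial_{ww}F(h,w)\,dw$; the $\partial_{ww}F(h,0)\cdot\tfrac12 W_h^2$ piece is genuinely the leading $O(h)$ term — fine — and the nested integral is bounded by $|W_h|^3\sup|\partial_{www}F|$, contributing $O(h^{3/2})$. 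Since also $F(h,0)=\int_0^h\partial_t F(\tau,0)d\tau$ and the $h^{\alpha}$-prefactored $\partial_t$ term are $O(h)$ regardless, every surviving contribution is $O(h)$ in $L^{\alpha}$, giving $\mathbb E[|F(h,W_h)|^{\alpha}]^{1/\alpha}\le C'_1(h)h$ and thus $\mathbb E[|F(h,W_h)|^{\alpha}]\le C'(h)h^{\alpha}$. The main obstacle throughout is purely bookkeeping: passing from the pointwise hypotheses on the moments of derivatives at $(h,W_h)$ to bounds on the same derivatives evaluated at intermediate points $(h,w)$ with $|w|\le|W_h|$ (and at $(h,0)$), which forces the repeated use of the fundamental theorem of calculus plus H\"older with the Gaussian moments of $W_h$; these estimates are the "very long calculations" the paper relegates to Appendix A.
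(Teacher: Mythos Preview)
The paper does not actually prove this lemma: its entire proof is the one-line citation ``these are special cases of Lemma 5.6.4 and Lemma 5.6.5 in \cite{Kloeden}''. You have also misread the role of Appendix~A --- it is \emph{not} a deferred proof of Lemma~\ref{lemma_estimate}, but rather the explicit calculations for Theorem~\ref{theorem_1} carried out \emph{without} invoking Lemma~\ref{lemma_estimate} (the sentence introducing the two lemmas reads ``The second allows to avoid very long calculations (see Appendix A)'').

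Your deterministic Taylor-expansion approach has a genuine gap that you notice but do not close: the hypotheses bound moments of the derivatives only at the random point $(h,W_h)$ (and, since they hold ``for any $h$'', at $(s,W_s)$ for $s\le h$), whereas your expansion needs control at points $(h,w)$ with deterministic $w$, $|w|\le|W_h|$, or at $(h,0)$. Your fix $|\partial_w F(h,0)|\le |\partial_w F(h,W_h)|+\int_0^{|W_h|}|\partial_{ww}F(h,w)|\,dw$ does not help, since the integrand is again off the diagonal $(s,W_s)$; likewise the remainder bound via $\sup_{|w|\le|W_h|}|\partial_{ww}F(h,w)|$ is not controlled by $\mathbb{E}[|\partial_{ww}F(h,W_h)|^{\alpha}]$ alone.

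The argument underlying Kloeden's lemmas proceeds instead via It\^o's formula: from $F(0,0)=0$,
\[
F(h,W_h)=\int_0^h \partial_w F(s,W_s)\,dW_s+\int_0^h\Big(\partial_t F(s,W_s)+\tfrac12\partial_{ww}F(s,W_s)\Big)ds,
\]
so every integrand sits exactly at $(s,W_s)$, where the hypotheses apply. The $ds$-integral is $O(h)$ in $L^{\alpha}$ by Jensen, and the stochastic integral is $O(h^{1/2})$ in $L^{\alpha}$ by the Burkholder--Davis--Gundy inequality, giving the first claim. For the second claim one iterates It\^o's formula on $\partial_w F(s,W_s)$, using $\partial_w F(0,0)=0$ and the extra hypotheses on $\partial_{tw}F,\partial_{www}F$, to upgrade the stochastic integral to $O(h)$. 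This route matches the stated hypotheses; yours would require stronger pointwise-in-$w$ assumptions which, while satisfied in the paper's applications, are not what the lemma assumes.
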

\begin{proof}
The statements of the lemma derive as special cases from Lemma 5.6.4 and Lemma 5.6.5 in \cite{Kloeden}.
\end{proof}\\

\subsection{Proof of Theorem \ref{theorem_1}}

We consider the case $t=T$. In fact we will find that our estimate is uniform for $t \leq T$. Using the notations in Remark \ref{remark_explicit} we can write $X_T=I_1+I_2$ where
\begin{eqnarray*}
I_1&=&\Phi_T\int_0^T{(b-cd)\Phi^{-1}_sds}\\
I_2&=&\Phi_T \int_0^T{(d)\Phi^{-1}_sdW_s}.
\end{eqnarray*}
Also the approximation $X^N_T$ can be written as the sums of two \emph{integrals} of the form $X^N_T=I^N_1+I^N_2$ where
\begin{eqnarray*}
I^N_1&=&(b-cd)\sum_{i=1}^N\Phi_T\Phi^{-1}_{t_{i-1}}\Delta t_i, \quad
I^N_2=d\sum_{i=1}^N\Phi_T\Phi^{-1}_{t_{i-1}}\Delta W_i.
\end{eqnarray*}
Obviously the strong error $\epsilon_N$ can be estimated by $\|I_1-I^N_1\|_2+\|I_2-I^N_2\|_2$, where hereafter
$\|\cdot\|_{\alpha}=(\mathbb{E}[|\cdot|^{\alpha}])^{1/\alpha}$.

\subsubsection{Estimate of $\|I_1-I^N_1\|_2$}

Setting $\Psi_{s,t}=\Phi_t(\Phi_s)^{-1}$ for any $s < t$, we obtain (with $\Delta t_i=h$)
$$ \|I_1-I^N_1\|_2 =\mathbb{E}\left[\left|\int_0^T(b-cd)\Psi_{t,T}dt-\sum_{i=1}^N(b-cd)\Psi_{t_{i-1},T}h\right|^2\right]^{1/2}$$
$$=\mathbb{E}\left[\left|\sum_{i=1}^N\int_{t_{i-1}}^{t_i}(b-cd) (\Psi_{t,T}- \Psi_{t_{i-1},T})dt\right|^2\right]^{1/2}$$
$$\leq |b-cd|\left(\sum_{i=1}^N\mathbb{E}\left[\left(\int_{t_{i-1}}^{t_i}|\Psi_{t,T}- \Psi_{t_{i-1},T}|dt\right)  ^2\right]^{1/2}\right).$$
By Jensen's inequality
$$\sum_{i=1}^N\mathbb{E}\left[\left(\int_{t_{i-1}}^{t_i}|\Psi_{t,T}- \Psi_{t_{i-1},T}|dt\right)^2\right]^{1/2} \leq
h^{1/2}\sum_{i=1}^N
\left(E\left[\int_{t_{i-1}}^{t_i}(\Psi_{t,T}-\Psi_{t_{i-1},T})^2dt \right]\right)^{1/2}.$$
and by  Fubini theorem we have to calculate
$\mathbb{E}[(\Psi_{t,T}-\Psi_{t_{i-1},T})^2]$. Since
$$\Psi_{s,t}=\exp\left(\left(a-\frac{c^2}{2}\right)(t-s)+c(W_t-W_s)\right).$$
and
$\Psi_{s,t}=\Psi_{s,u}\Psi_{u,t}$ for any $s \leq u \leq t$ we
obtain that
\begin{equation}\label{expectation1}
\mathbb{E}[(\Psi_{t,T}-\Psi_{t_{i-1},T})^2]=\mathbb{E}[(\Psi_{t,T})^2]\mathbb{E}[(1-\Psi_{t_{i-1},t})^2]
\end{equation}
because $\Psi_{t,T}$ and $\Psi_{t_{i-1},t}$ are independent as a consequence of the  Brownian increments
 independence.\\
\noindent
It is simple to note that the function
$$F_1(t-t_i,W_t-W_{t_i})=1-e^{(t-t_i)\left(a-\frac{c^2}{2}\right)+c(W_t-W_{t_i})},$$
satisfies $F_1(0,0)=0$ and, by Lemma \ref{lemma_exponential},
$$\mathbb{E}[\partial_t(F_1)(t-t_i,W_t-W_{t_i})],\mathbb{E}[\partial_{w}(F_1)(t-t_i,W_t-W_{t_i})],\mathbb{E}[\partial_{ww}(F_1)(t-t_i,W_t-W_{t_i})]<+\infty$$
Thus, by Lemma \ref{lemma_estimate}, there exists an increasing
function $C_1(h)$
$$\mathbb{E}\left[(F_1(t-t_i,W_{t}-W_{t_i}))^2\right] \leq C_1(t-t_i)(t-t_i).$$
Using Lemma \ref{lemma_exponential} we get
$$\mathbb{E}\left[\Psi_{t,T}^2\right]=\exp((2a+c^2)(T-t)),$$
obtaining
\begin{equation}\label{first_errorestimate}
\begin{array}{rcl}
\|I_1-I^N_1\|_2 &\leq & |b-cd|\sqrt{C_1(h)} h^{1/2} \sum_{i=1}^N \exp\left(\left(a+\frac{c^2}{2}\right)(T-t_i)\right) h\\ &  \leq & |b-cd| \sqrt{C_1(h)} G_1(T) h^{1/2},
\end{array}
\end{equation}
where
\begin{equation}\label{G_1}
G_1(T)=\int_0^T{\exp\left(\left(a+\frac{c^2}{2}\right)(T-t)\right)dt}=\frac{1}{a+\frac{c^2}{2}}(\exp((a+c^2/2)T)-1).
\end{equation}

\subsubsection{Estimate of $\|I_2-I^N_2\|_2$}\label{subsection_I_2}

We first consider $I_2=(d)\Phi_T\int_0^T{(\Phi_t)^{-1}dW_t}$.
Since Ito integral involves adapted processes we cannot bring
$\Phi_T$ under  the integral sign. However it is possible to take
advantage of the backward integral formulation which allows to
integrate processes that are measurable with respect to the
(future) filtration  $\mathcal{F}^t=\sigma\{W_s|s \in [t,T]\}$. In
particular when $X_s$ is $\mathcal{F}^t$-measurable then
$$\int_0^T{X_s d^+W_s}=\lim_{n \rightarrow + \infty}\left(\sum_{i=1}^n X_{t^n_{i}}(W_{t^n_{i}}-W_{t^n_{i-1}})\right),$$
where $\{t^n_i\}|_i$ is a  sequence of $n$ points partitions of the interval $[0,T]$, having amplitude decreasing to $0$ and the limit is understood in probability.\\
When $F$ is a regular function, $F(W_t,t)$ is a process which is measurable with respect to both the filtrations $\mathcal{F}_t$ and $\mathcal{F}^t$; therefore one can calculate either  $\int_0^T{F(W_t,t)dW_t}$ and $\int_0^T{F(W_t,t)d^+W_t}$.\\
The next well-known lemma says that we can write $I_2$ in terms of a backward
integral, which allows to bring $\Phi_T$ under the integral sign.

\begin{lemma}\label{lemma_backward1}
Let $F:\mathbb{R}^2 \rightarrow \mathbb{R}$ be a $C^2$-function
such that
$$\mathbb{E}[(F(W_t,t))^2]< + \infty.$$
Then
$$\int_0^T{F(W_t,t)dW_t}=\int_0^T{F(W_t,t)d^+W_t}-\int_0^T{\partial_w(F)(W_t,t)dt}.$$
\end{lemma}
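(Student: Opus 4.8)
The plan is to compare, along a fixed sequence of partitions $0 = t_0^n < t_1^n < \dots < t_n^n = T$ of $[0,T]$ with mesh tending to $0$, the backward Riemann sums of $F(W_\cdot,\cdot)$ (evaluated at the right endpoints) with the forward, Itô Riemann sums (evaluated at the left endpoints), and to show that their difference converges in probability to $\int_0^T \partial_w(F)(W_t,t)\,dt$. Writing $\Delta W_i = W_{t_i^n} - W_{t_{i-1}^n}$ and $\Delta t_i = t_i^n - t_{i-1}^n$, set
$$S_n^{+} = \sum_{i=1}^n F(W_{t_i^n},t_i^n)\,\Delta W_i, \qquad S_n = \sum_{i=1}^n F(W_{t_{i-1}^n},t_{i-1}^n)\,\Delta W_i .$$
By the definition of the backward integral recalled above, $S_n^{+} \to \int_0^T F(W_t,t)\,d^+W_t$ in probability; by the standard construction of the Itô integral (the hypothesis $\mathbb{E}[(F(W_t,t))^2]<+\infty$, together with the continuity of $t\mapsto F(W_t,t)$, makes the integrand admissible) $S_n \to \int_0^T F(W_t,t)\,dW_t$ in probability. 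Hence it suffices to identify the limit of
$$S_n^{+} - S_n = \sum_{i=1}^n \big( F(W_{t_i^n},t_i^n) - F(W_{t_{i-1}^n},t_{i-1}^n) \big)\,\Delta W_i .$$

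Next I would Taylor-expand $F$ around $(W_{t_{i-1}^n},t_{i-1}^n)$: since $F \in C^2$, with an intermediate point $\xi_i$ on the segment joining the two arguments,
$$F(W_{t_i^n},t_i^n) - F(W_{t_{i-1}^n},t_{i-1}^n) = \partial_w(F)(W_{t_{i-1}^n},t_{i-1}^n)\,\Delta W_i + \partial_t(F)(W_{t_{i-1}^n},t_{i-1}^n)\,\Delta t_i + \tfrac12\big( \partial_{ww}(F)(\xi_i)(\Delta W_i)^2 + 2\partial_{wt}(F)(\xi_i)\Delta W_i\,\Delta t_i + \partial_{tt}(F)(\xi_i)(\Delta t_i)^2 \big).$$
Multiplying by $\Delta W_i$ and summing, the only term that survives in the limit is $\sum_{i=1}^n \partial_w(F)(W_{t_{i-1}^n},t_{i-1}^n)(\Delta W_i)^2$, which converges in probability to $\int_0^T \partial_w(F)(W_t,t)\,dt$ by the classical quadratic-variation identity $\sum_i \phi_{t_{i-1}^n}(\Delta W_i)^2 \to \int_0^T \phi_s\,ds$ applied to the continuous adapted process $\phi_s = \partial_w(F)(W_s,s)$ (split $(\Delta W_i)^2 = \Delta t_i + ((\Delta W_i)^2 - \Delta t_i)$: the first part is a Riemann sum, the second a sum of orthogonal terms with $L^2$-norms of order $\Delta t_i$). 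All remaining contributions vanish in probability: $\sum_i \partial_t(F)(W_{t_{i-1}^n},t_{i-1}^n)\,\Delta t_i\,\Delta W_i$ has second moment of order $\sum_i \Delta t_i^3 \to 0$; $\sum_i \partial_{ww}(F)(\xi_i)(\Delta W_i)^3$ is controlled by $\sum_i |\Delta W_i|^3 \to 0$; and the mixed and pure-$\Delta t$ terms are bounded by a constant times $(\max_i \Delta t_i)\sum_i(\Delta W_i)^2 \to 0$ and by a constant times $(\max_i\Delta t_i)\sum_i|\Delta W_i| \to 0$ respectively. Therefore $S_n^{+} - S_n \to \int_0^T \partial_w(F)(W_t,t)\,dt$ in probability, and since $S_n^{+} \to \int_0^T F\,d^+W$ and $S_n \to \int_0^T F\,dW$, rearranging yields the claimed identity.

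The main obstacle is that $F$ and its first and second derivatives are not assumed globally bounded, so none of the convergence statements above can be quoted verbatim. I would remove this difficulty by the usual localization: for $R>0$ let $\tau_R = \inf\{t\ge 0 : |W_t| \ge R\} \wedge T$; on the event $\{\tau_R = T\}$ the trajectory $(W_t,t)_{t\le T}$ lies in the compact rectangle $[-R,R]\times[0,T]$, on which $\partial_w(F)$, $\partial_t(F)$, $\partial_{ww}(F)$, $\partial_{wt}(F)$, $\partial_{tt}(F)$ are bounded and uniformly continuous, so all the estimates of the previous paragraph hold for the sums stopped at $\tau_R$ with explicit, $R$-dependent constants. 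Since $\mathbb{P}(\tau_R < T) \to 0$ as $R \to \infty$, the convergence of the stopped sums upgrades to convergence in probability of the original sums, which is precisely the mode of convergence in which the backward integral is defined. (Alternatively, one may invoke the general relation $\int_0^T F(W_t,t)\,d^+W_t = \int_0^T F(W_t,t)\,dW_t + [F(W_\cdot,\cdot),W]_T$ between backward and forward integrals and compute the covariation from Itô's formula, $d[F(W_\cdot,\cdot),W]_t = \partial_w(F)(W_t,t)\,dt$; but the Riemann-sum argument above is self-contained.)
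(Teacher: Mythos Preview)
Your proof is correct, but it follows a genuinely different route from the paper's. The paper introduces the antiderivative $\tilde F(w,t)=\int_0^w F(u,t)\,du$ and applies the forward and the backward It\^o formulas to $\tilde F(W_t,t)$; subtracting the two expansions makes the drift term $\partial_t\tilde F$ cancel and leaves exactly the claimed identity. You instead work directly at the level of Riemann sums, comparing right-endpoint and left-endpoint sums, Taylor-expanding the increment of $F$, and isolating the quadratic-variation term $\sum_i \partial_w(F)(W_{t_{i-1}^n},t_{i-1}^n)(\Delta W_i)^2$ as the only surviving contribution; localization by $\tau_R$ takes care of the lack of global bounds. Your argument is more self-contained (it does not presuppose a backward It\^o formula) and makes transparent that the correction term is precisely the covariation $[F(W_\cdot,\cdot),W]_T$; the paper's argument is shorter and more elegant once one is willing to quote both It\^o formulas, and it sidesteps the need to estimate the various remainder sums one by one. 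Both are standard; your parenthetical remark at the end, computing the covariation via $d[F(W_\cdot,\cdot),W]_t=\partial_w(F)(W_t,t)\,dt$, is in fact closest in spirit to what the paper does.
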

\begin{proof}
We report the proof for convenience of the reader (see, e.g., \cite{Pardoux1987}). Setting
$$\tilde{F}(w,t)=\int_0^w{F(u,t)du},$$
since $F$ is $C^2$ then also $\tilde{F}$ is $C^2$. From this fact
one deduces that
\begin{eqnarray*}
\tilde{F}(W_t,t)-\tilde{F}(W_s,s)&=&\int_s^t{F(W_{\tau},\tau)dW_{\tau}}+\int_s^t{\partial_t(\tilde{F})(W_{\tau},\tau)d\tau}\\&&+\frac{1}{2}\int_s^t{\partial_w(F)(W_{\tau},\tau)d\tau}\\
\tilde{F}(W_t,t)-\tilde{F}(W_s,s)&=&\int_s^t{F(W_{\tau},\tau)d^+W_{\tau}}+\int_s^t{\partial_t(\tilde{F})(W_{\tau},\tau)d\tau}\\
&&-\frac{1}{2}\int_s^t{\partial_w(F)(W_{\tau},\tau)d\tau}.
\end{eqnarray*}
By equating the two expressions one obtains the final formula.
\end{proof}
\begin{remark}
Since in the  proof of Lemma \ref{lemma_backward1} the backward Ito integral plays a fundamental role, the same result  cannot be obtained within a pathwise approach such as the rough paths one.
\end{remark}
\noindent Since
$$(\Phi_t)^{-1}=\exp(-(a-c^2/2)t-cW_t)=F(W_t,t),$$
and  $\partial_w(F)(w,t)=-cF(w,t)$, by Lemma
\ref{lemma_backward1}, we can write
\begin{eqnarray*}
I_2&=&\Phi_T(d)\int_0^T{(\Phi_t)^{-1}dW_t}\\
&=&\Phi_T(d)\left(\int_0^T{(\Phi_t)^{-1}d^+W_t}+c\int_0^T{(\Phi_t)^{-1}dt}\right)\\
&=&(d)\left(\int_0^T{\Psi_{t,T}d^+W_t}+c\int_0^T{\Psi_{t,T}dt}\right).
\end{eqnarray*}
Introducing
$\tilde{I}_2=(d)\int_0^T{\Psi_{t,T}d^+W_t}$ and
$$\tilde{I}^N_2=(d)\sum_{i=1}^N{\Psi_{t_{i},T}\Delta W_i},$$
we have that
\begin{equation}\label{error_decomposition}
\|I_2-I^N_2\|_2 \leq \|\tilde{I}_2-\tilde{I}^N_2\|_2+\left\|(\tilde{I}^N_2-I^N_2)+cd\int_0^T{\Psi_{t,T}dt}\right\|_2.
\end{equation}
We first consider the term $\|\tilde{I}_2-\tilde{I}^N_2\|_2$. The
process $\tilde{I}^N_2$ can be written as $\int_0^T{(d)H_tdW^{+}_t}$
where $H_t$ is the $\mathcal{F}^t-$  measurable process given by
$$H_t=\sum_{i=1}^N \Psi_{t_{i},T} 1_{(t_{i-1},t_i]}(t),$$
where $1_{(t_{i-1},t_i]}$ is the characteristic function of the
interval $(t_{i-1},t_i]$. By Ito's isometry and  Fubini's
Theorem we obtain
\begin{equation}\label{first_term}
\begin{array}{rcl}
\|\tilde{I}_2-\tilde{I}^N_2\|^2_2&=&(d)^2\mathbb{E}\left[\left(\int_0^T{(\Psi_{t,T}-H_t)dW_t}\right)^2\right]\\
&=&(d)^2\mathbb{E}\left[\int_0^T{(\Psi_{t,T}-H_t)^2dt}\right]\\
&=&(d)^2\int_0^T{\mathbb{E}[(\Psi_{t,T}-H_t)^2]dt}\\
&=&(d)^2\sum_{i=1}^N\int_{t_{i-1}}^{t_i}{\mathbb{E}[(\Psi_{t,T}-\Psi_{t_i,T})^2]dt}.
\end{array}
\end{equation}
 Since Brownian motion has independent increments, we have that
\begin{eqnarray*}\label{expectation2}
\mathbb{E}[(\Psi_{t,T}-\Psi_{t_i,T})^2]&=&\mathbb{E}[(\Psi_{t_i,T})^2]\mathbb{E}\left[(1-\Psi_{t,t_i})^2\right].
\end{eqnarray*}
Introducing the function:
$$ H(t_i-t,W_{t_i}-W_{t})=1-\Psi_{t,t_i}$$
which satisfies $H(0,0)=0$, by Lemma \ref{lemma_estimate} and Lemma \ref{lemma_exponential} we obtain 
\begin{eqnarray*}
\|\tilde{I}_2-\tilde{I}^N_2\|^2_2
&\leq&(d)^2\sum_{i=1}^N\exp((2a+c^2)(T-t_i))C_2(h)h^{2}\\
\end{eqnarray*}
where $C_2(h)$ is an increasing function and, finally,
\begin{equation}\label{second_errorestimate}
\|\tilde{I}_2-\tilde{I}^N_2\|_2
\leq (d)\sqrt{(G_2(T)C_2(h))}h^{1/2}
\end{equation}
where
\begin{equation}\label{G_2}
G_2(T)=\int_0^T\exp{(2a+c^2)(T-t)}dt.
\end{equation}\\
In order to estimate the other term in the right-hand side of \eqref{error_decomposition} we note that by introducing
$$K_i(t,W_t)=\exp\left(\left(a-\frac{c^2}{2}\right)(T-t)+c(W_T-W_t)\right)(W_{t_i}-W_t)$$
we have 
\begin{eqnarray*}
I^N_2&=&d\sum_{i=1}^N K_i(t_{i-1},W_{t_{i-1}}),
\end{eqnarray*}
and
$$
K_i(t_{i},W_{t_i})=0$$

\noindent By applying Lemma \ref{lemma_backward1} to $K_i(t_{i},W_{t_i})$ we can write
\begin{eqnarray*}
0-K_i(t,W_t)&=&\int_{t}^{t_{i}}{\partial_{w}(K_i)(s,W_s)d^+W_s}+\int_{t}^{t_i}{\partial_s(K_i)(s,W_s)ds}+\\
&&-c\int_t^{t_i}{\Psi_{s,T}ds}-\frac{c^2}{2}\int_{t}^{t_i}{K_i(s,W_s)ds}.
\end{eqnarray*}
From the previous equality,  by Ito isometry and Minkowski's integral inequality we get
\begin{eqnarray*}
\left\|\tilde{I}^N_2-I^N_2+cd \int_0^T{\Psi_{t,T}dt}
\right\|_2&=&d\left\|\sum_{i=1}^N
\int_{t_{i-1}}^{t_i}{\Psi_{t_i,T}d^+W_t}+\int_{t_{i-1}}^{t_i}{\partial_{w}(K_i)(t,W_t)d^+W_t}+\right.\\
&&\left.+\int_{t_{i-1}}^{t_i}{\partial_t(K_i)(t,W_t)dt}-\frac{c^2}{2}\int_{t_{i-1}}^{t_i}{K_i(t,W_t)dt}\right\|_2\\
&\leq & d \left(\left\|\int_0^T{R_t d^+W_t} \right\|_2+\left\|\int_0^T{M_tdt}\right\|_2\right),\\
&\leq & d
\left(\left(\int_0^T{\mathbb{E}[R_t^2]dt}\right)^{1/2}+\int_0^T{\left(\mathbb{E}[M_t^2]\right)^{1/2}dt} \right),
\end{eqnarray*}
where
\begin{eqnarray*}
R_t&=&\sum_{i=1}^N{(\partial_{w}(K_i)(t,W_t)+\Psi_{t_i,T})1_{(t_{i_-1},t_i]}}(t)\\
M_t&=&\sum_{i=1}^N{\left(\partial_t(K_i)(t,W_t)-\frac{c^2}{2}K_i(t,W_t)\right)1_{[t_{i_-1},t_i]}}(t)
\end{eqnarray*}
When $t_{i-1} < t \leq t_i$,  by independence
\begin{eqnarray*}
\mathbb{E}[R_t^2]&\leq&2\mathbb{E}[\Psi_{t_i,T}^2]\mathbb{E}[(c\Psi_{t,t_i}(W_{t_i}-W_t))^2+(\Psi_{t,t_i}-1)^2].
\end{eqnarray*}
Introducing
\begin{eqnarray*}
F_2(t_i-t,W_{t_i}-W_t)=c\exp\left(\left(a-\frac{c^2}{2}\right)(t_i-t)+c(W_{t_i}-W_t)\right)(W_{t_i}-W_t)\\
F_3(t_i-t,W_{t_i}-W_t)=\exp\left(\left(a-\frac{c^2}{2}\right)(t_i-t)+c(W_{t_i}-W_t)\right)-1,
\end{eqnarray*}
we have that $F_2(0,0)=F_3(0,0)=0$ and $\mathbb{E}[|\partial_{w}(F_i)(t,W_{t_i}-W_t)|^2]$, $\mathbb{E}[|\partial_{ww}(F_i)(t,W_{t_i}-W_t)|^2]$, $\mathbb{E}[|\partial_{t}(F_i)(t,W_{t_i}-W_t)|^2] \leq L(t_i-t)$ and so, by Lemma \ref{lemma_estimate}, there exist two continuous increasing functions $C_3(t), C_4(t)$ such that
$$\mathbb{E}[R_t^2] \leq 2 \exp\left((2a+{c^2})(T-t_i)\right)(C_3(t_i-t)+C_4(t_i-t))|t_i-t|.$$
Since by independence
$$\mathbb{E}[M_t^2]=\mathbb{E}[(a\Psi_{t,T}(W_{t_i}-W_t))^2]=\mathbb{E}[(\Psi_{t_i,T})^2]\mathbb{E}[(a\Psi_{t,T}(W_{t_i}-W_t))^2]$$
analogously we can prove that there exists an increasing function $C_5$ such that
$$\mathbb{E}[M_t^2]\leq \exp\left(\left(2a+{c^2}\right)(T-t_i)\right) C_5(t-t)|t_i-t|. $$
For the second term in the right-hand side of \eqref{error_decomposition}, we have finally the following estimate
\begin{equation}\label{third_errorestimate}
\left\|\tilde{I}^N_2-I^N_2+cd \int_0^T{\Psi_{t,T}dt}\right\|_2 \leq  d\left\{\sqrt{G_2(T)} (\sqrt{2(C_3(h)+C_4(h))}) +G_1(T)\sqrt{C_5(h)}\right\} h^{1/2},
\end{equation}
where
$G_1(T)$ and $G_2(T)$ are given by \eqref{G_1} and \eqref{G_2} respectively.

\subsection{Proof of Theorem \ref{theorem_2}}

We make the proof only for $a <0$, since in the other case the estimate are equal to the Euler case and can be addressed with the same proof. We introduce the two integrals
\begin{eqnarray*}
\bar{I}^N_1&=&(b-cd)\sum_{i=1}^N\Phi_T\Phi^{-1}_{t_{i-1}}\Delta t_i,\\
\bar{I}^N_2&=&d\sum_{i=1}^N\Phi_T\Phi^{-1}_{t_{i-1}}\Delta W_i-\frac{cd}{2}\sum_{i=1}^{N} \Phi_T \Phi^{-1}_{t_{i-1}} ((\Delta W_i)^2-(\Delta t_i)).
\end{eqnarray*}

\subsubsection{Estimate of $\|I_1-\bar{I}^N_1\|_1$}

First we note that (with $\delta t_i=h$)
\begin{eqnarray*}
\| I_1- \bar{I}^N_1 \|_1&\leq& |b-cd| \sum_{i=1}^N \left\|\Phi_T\int_{t_{i-1}}^{t_i}{\Phi_{t}^{-1}dt}-\Phi_T\Phi_{t_{i-1}}^{-1} h \right\|_1 \\
& \leq & |b-dc| \sum_{i=1}^N \left\| \Psi_{t_i,T} \right\|_{\alpha} \left\| \int_{t_{i-1}}^{t_i}{\Psi_{t,t_i}dt}- \Psi_{t_{i-1},t_i} h \right\|_{2n}\\
&=&|b-dc| \left\| \int_{0}^{h}{(\Psi_{t,h}-\Psi_{0,h})dt}\right\|_{2n} \left(\sum_{i=1}^N \left\| \Psi_{t_i,T} \right\|_{\alpha}\right)
\end{eqnarray*}
where we have taken, $n \in \mathbb{N}$, $\frac{1}{2n}+\frac{1}{\alpha}=1$ and $1 <\alpha <2$ such that $\alpha a + \alpha (\alpha-1)\frac{c^2}{2} \leq 0$ (the last condition guarantees that when $T \rightarrow \infty$ we have $\mathbb{E}[\Psi_{t_i,T}^{\alpha}]\rightarrow 0$).
By Jensen's inequality and Lemma \ref{lemma_estimate} we can derive the following estimate:
\begin{eqnarray*}
\left\| \int_{0}^{h}{(\Psi_{t,h}-\Psi_{0,h})dt}\right\|^{2n}_{2n} &\leq &h^{2n-1} \int_0^h{\mathbb{E}[(\Psi_{t,h}-\Psi_{0,h})^{2n}]}dt\\
&\leq & h^{3n} C_5(h),
\end{eqnarray*}
where $C_5(h)$ is an increasing function and in the last inequality we have used the fact that the function $F_4(t,W_t)=\Psi_{t,h}-\Psi_{0,h}$ is such that $F_4(0,0)=0$.
By Lemma \ref{lemma_exponential}, we have that
$$\|\Psi_{t_i,T}\|_{\alpha}=\exp\left(\left(a+\frac{c^2}{2}(\alpha-1)\right)(T-t_i)\right),$$
and so
\begin{eqnarray*}
\|I_1-\bar{I}^N_1\|_1 &\leq |b-cd|& \sum_{i=1}^N \exp\left(\left(a+\frac{c^2}{2}(\alpha-1)\right)(T-t_i)\right) (C_5(h))^{1/2n} h^{3/2} \\
&\leq & |b-cd| G_4(T) (C_5(h))^{1/2n} h^{1/2}
\end{eqnarray*}
where
\begin{equation}\label{G_4}
G_4(T)=\int_0^T{\exp\left(\left(a+\frac{c^2}{2}(\alpha-1))(T-t)\right)\right)dt}.
\end{equation}

\subsubsection{Estimate of $\|I_2-\bar{I}^N_2\|_1$}

First we note that
\begin{eqnarray*}
\| I_2- \bar{I}^N_2 \|_1&\leq& |d| \sum_{i=1}^N \left\|\Phi_T\int_{t_{i-1}}^{t_i}{\Phi_{t}^{-1}dW_t}-\Phi_T\Phi_{t_{i-1}}^{-1} \Delta W_i+\right.\\
&&\left. +\frac{c}{2} \Phi_T \Phi_{t_{i-1}}^{-1}((\Delta W_i)^2-h) \right\|_1 \\
& \leq & |d| \sum_{i=1}^N \left\| \Psi_{t_i,T} \right\|_{\alpha} \left\| \Phi_{t_i}\int_{t_{i-1}}^{t_i}{\Phi_t^{-1}dW_t}-\Psi_{t_{i-1},t_i} \Delta W_i+\right.\\
&&\left.+ \frac{c}{2} \Psi_{t_{i-1},t_i} ((\Delta W_i)^2-h)  \right\|_{2n}
\end{eqnarray*}
where $\alpha,n$ are as in the previous subsection. We introduce the following notation
\begin{eqnarray*}
I_{2,t_i}&=&\Phi_{t_i}\int_{t_{i-1}}^{t_i}{(\Phi_t)^{-1}dW_t}\\
&=&\Phi_{t_i}\left(\int_{t_{i-1}}^{t_i}{(\Phi_t)^{-1}d^+W_t}+c\int_{t_{i-1}}^{t_i}{(\Phi_t)^{-1}dt}\right)\\
&=&\int_{t_{i-1}}^{t_i}{\Psi_{t,{t_i}}d^+W_t}+c\int_{t_{i-1}}^{t_i}{\Psi_{t,{t_i}}dt},
\end{eqnarray*}
where  we have used Lemma \ref{lemma_backward1} and the fact that $\Psi_{s,t}=\Phi_t(\Phi_s)^{-1}$. By introducing also $\hat{I}_{2,t_i}=\int_{t_{i-1}}^{t_i}{\Psi_{t,t_i}d^+W_t}$ and
\begin{eqnarray*}
\bar{I}^N_{2,t_i}&=&\Psi_{t_{i-1},t_i}\Delta W_i-\frac{c}{2}\Psi_{t_{i-1},t_i} ((\Delta W_i)^2-h)\\
\hat{I}^N_{2,t_i}&=&\Psi_{t_{i},t_i}\Delta W_i+\frac{c}{2}((\Delta W_i)^2-h),
\end{eqnarray*}
we have that
$$\|I_{2,t_i}-\bar{I}^N_{2,t_i}\|_{2n}\leq \|\hat{I}_{2,t_i}-\hat{I}^N_{2,t_i}\|_{2n}+\left\|(\hat{I}^N_{2,t_i}-\bar{I}^N_{2,t_i})+c\int_{t_{i-1}}^{t_i}{\Psi_{t,t_{i}}dt}\right\|_{2n}.$$
It is simple to see that the two norms on the right-hand side of the previous expression do not depend on $t_i$ but only on the difference $h=t_i-t_{i-1}$, so we study the functions (with $\Psi_{t_i,t_i}=1$):
\begin{eqnarray*}
Z_1(h)&=&\|\hat{I}_{2,h}-\hat{I}^N_{2,h}\|^{2n}_{2n}=\left\|\int_0^h{(\Psi_{t,h}-1-c(W_h- W_t))d^+W_t}\right\|^{2n}_{2n}\\
Z_2(h)&=&\left\|(\hat{I}^N_{2,t_i}-\bar{I}^N_{2,t_i})+c\int_{t_{i-1}}^{t_i}{\Psi_{t,t_{i}}dt}\right\|^{2n}_{2n}\\
&=&\left\|(1-\Psi_{0,h})W_h+\frac{c}{2}(\Psi_{0,h}+1)W_h^2-\frac{c}{2}(\Psi_{0,h}+1)h+c\int_0^h{\Psi_{t,h}dt}\right\|^{2n}_{2n}
\end{eqnarray*}
By a well-known consequence of Ito isometry (see, e.g., \cite{Friedman1975}) we can estimate the function $Z_1(h)$ as:
$$Z_1(h)\leq D_n h^{n-1} \int_0^h{\mathbb{E}[(\Psi_{t,h}-1-c(W_h- W_t))^{2n}]dt},$$
where $D_n=(n(2n-1))^n$. Since the function
$$F_5(h-t,W_h-W_t)=\exp\left((a-\frac{c^2}{2})(h-t)+c(W_h-W_t)\right)-1-c(W_h-W_t)$$
satisfies $F_5(0,0)=\partial_w(F_5)(0,0)=0$, by Lemma \ref{lemma_estimate} there exists an increasing function $C_6(h)$ such that
$$Z_1(h) \leq C_6(h) h^{3n}.$$
As far as concerned the function $Z_2(h)$, by introducing
$$K(t,W_t)=(1-\Psi_{t,h})(W_h-W_t)+\frac{c}{2}(\Psi_{t,h}+1)(W_h-W_t)^2-\frac{c}{2}(\Psi_{t,h}+1)(h-t),$$
it is immediate to see that
$$Z_2(h)=\left\|K(0,0)+c\int_0^h{\Psi_{t,h}dt}\right\|^{2n}_{2n}.$$
By applying Lemma \ref{lemma_backward1} to $K(h,W_h)$, and by noting that $K(h,W_h)=0$, we obtain 
$$0-K(0,0)=\int_0^h(\partial_t(K)(t,W_t)-\frac{1}{2}\partial_{ww}(K)(t,W_t)dt+\int_0^h\partial_wK(t,W_t)d^{+}W_t$$
Since we have that $-\partial_t(K)(h,W_h)+\partial_{ww}(K)(h,W_h)/2+c\Psi_{0,h}=0$, and  that $K(h,W_h)=\partial_w(K)(h,W_h)=\partial_{ww}(K)(h,W_h)=0$, by Jensen's inequality, Lemma \ref{lemma_estimate} and by applying the same techniques used for obtaining \eqref{third_errorestimate} we find that
$$Z_2(h)^{1/2n} \leq \left\{ (C_7(h))^{1/2n}+(C_8(h))^{1/2n} \right\} h^{3/2}$$
or, equivalently,
$$Z_2(h) \leq C_9(h)h^{3n},$$
with the obvious definition of the function $C_9(h)$.\\
Finally we have 
\begin{eqnarray*}
\|I^N_2-\bar{I}^N_2\|_1&\leq & |d| (C_6(h)^{1/2n}+C_9(h)^{1/2n})\sum_{i=1}^N \exp\left(\left(a+\frac{c^2}{2}(\alpha-1)\right)(T-t_i)\right)h^{3/2}\\
&\leq &|d| (C_6(h)^{1/2n}+C_9(h)^{1/2n}) G_4(T) h^{1/2},
\end{eqnarray*}
where
$G_4(T)$ is given by \eqref{G_4}.

\section{Numerical examples}\label{section_numerical}

We show some numerical experiments which confirm the theoretical estimate proved in Section \ref{section_estimate} and permit to study other properties of the new discretization methods introduced in  Section \ref{section_linear}. \\
We simulate the linear SDE \refeqn{equation_linear} with coefficients $a=-2$, $b=10$, $c=10$ e $d=10$. The coefficients are such that $a+\frac{c^2}{2} >0$ with $a <0$. This means that the considered linear equation admits an equilibrium probability density with finite first moment  and infinite second moment. The coefficient $d$ has been chosen big enough to put in evidence the noise effect.\\

We make a comparison between the Euler and Milstein methods applied directly to equation \refeqn{equation_linear} and the new exact methods \refeqn{equation_discretization1} and \refeqn{equation_discretization2} with the constants $k=0$ and $k=\frac{-d}{c}=-1$. In particular we observe that when $k=-1$, the schemes \refeqn{equation_discretization1} and \refeqn{equation_discretization2} coincide. We calculate the following two errors:
\begin{itemize}
\item the weak error $E^w=|\mathbb{E}[X_t-X^N_t]|$,
\item the strong error $E^s=\mathbb{E}[|X_t-X^N_t|]$.
\end{itemize}
The weak error is estimated trought the explicit expression
$$\mathbb{E}[X_t]=e^{at},$$
for the first moment of the  linear SDE solution, and by using Monte-Carlo method with $1000000$ paths for calculating $\mathbb{E}[X^N_t]$. The strong error is estimated by exploiting Monte-Carlo simulation of $X_t$ and $X^N_t$ with $1000000$ paths. In order to simulate $X_t$ we apply the Milstein method with a steps-size of $h=0.0001$, for which we have verified that it gives a good approximation of both $\mathbb{E}[X_t]$ and  the equilibrium density for $t \rightarrow + \infty$. Since we use Monte-Carlo methods for estimating $E^w$ and $E^s$, the two errors include both the systematic errors of the considered schemes and the statistical errors of the Monte-Carlo estimate procedure.

\begin{figure}[!ht]
   \begin{minipage}[c]{.1\textwidth}
       \centering
            \includegraphics[height=8cm, width=5.33cm, angle=90]{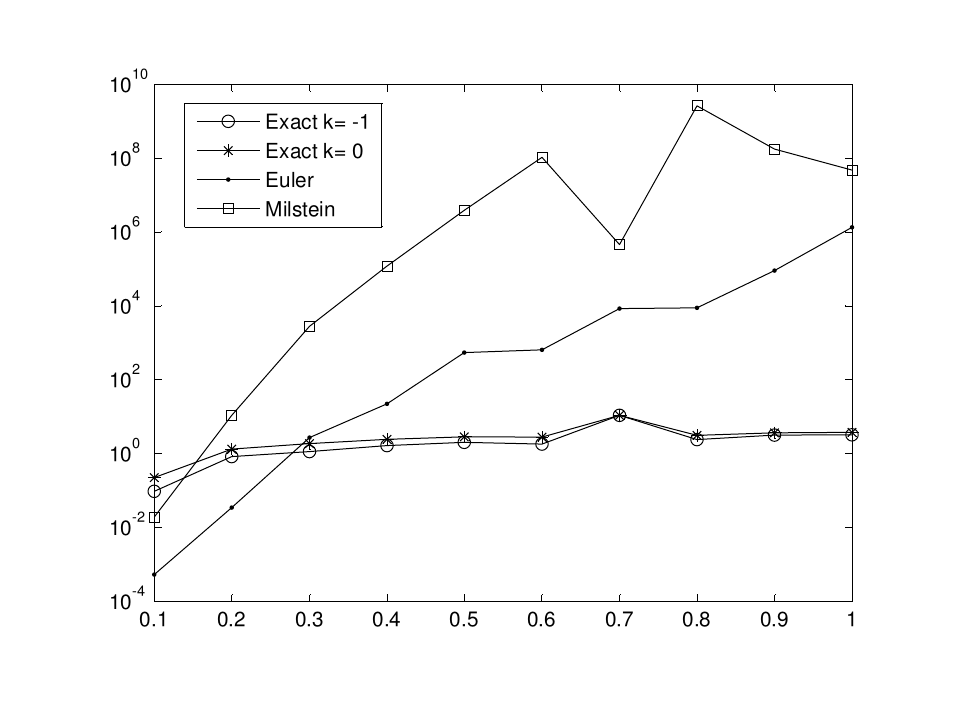}
    \end{minipage}%
    \hspace{50mm}%
    \begin{minipage}[c]{.1\textwidth}
       \centering
        \includegraphics[height=8cm, width=5.33cm, angle=90]{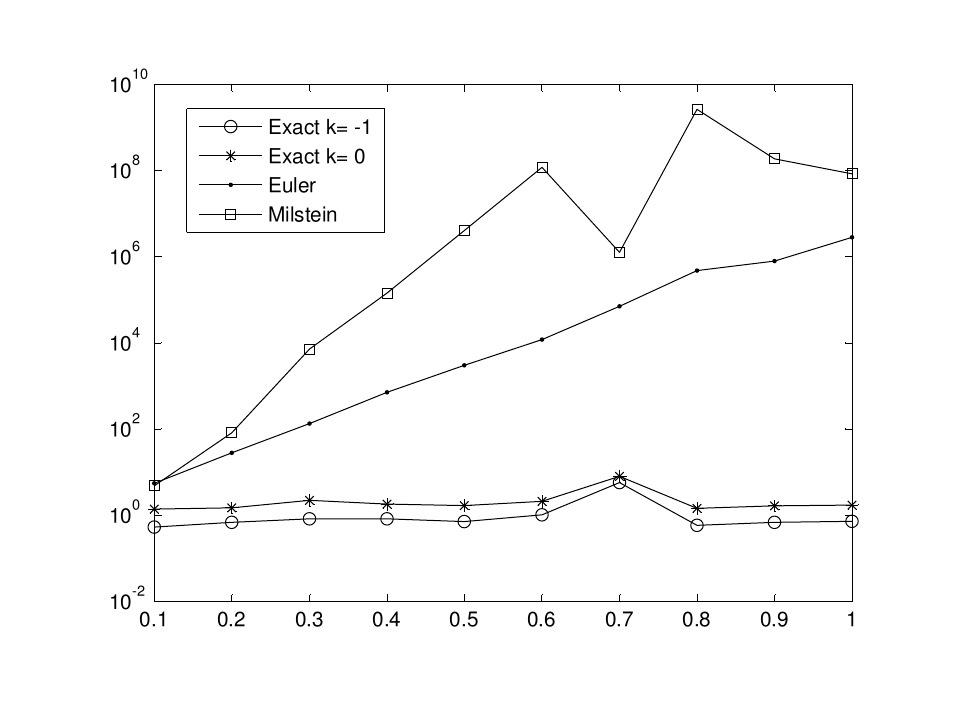}
    \end{minipage}
        \caption{Strong and weak errors with $t \in [0.1,1]$ and stepsize $h=0.025$ \label{figure1}}
\end{figure}

In Figure \ref{figure1} we report the weak and strong errors with respect to the maximum time of integration $t$ which varies from $0.1$ to $1$ and stepsize $h=0.025$. As predicted by Theorem \ref{theorem_2}, the error of the exact method for $k=-1$ remains bounded. It is important to note that for the exact method in the case $k=0$ (where Theorem \ref{theorem_1} and Theorem \ref{theorem_2} do not apply) the errors remains bounded too, while for Euler and Milstein methods the errors grow exponentially with $t$.

\begin{figure}[!ht]
   \begin{minipage}[c]{.1\textwidth}
       \centering
            \includegraphics[height=8cm, width=5.33cm, angle=90]{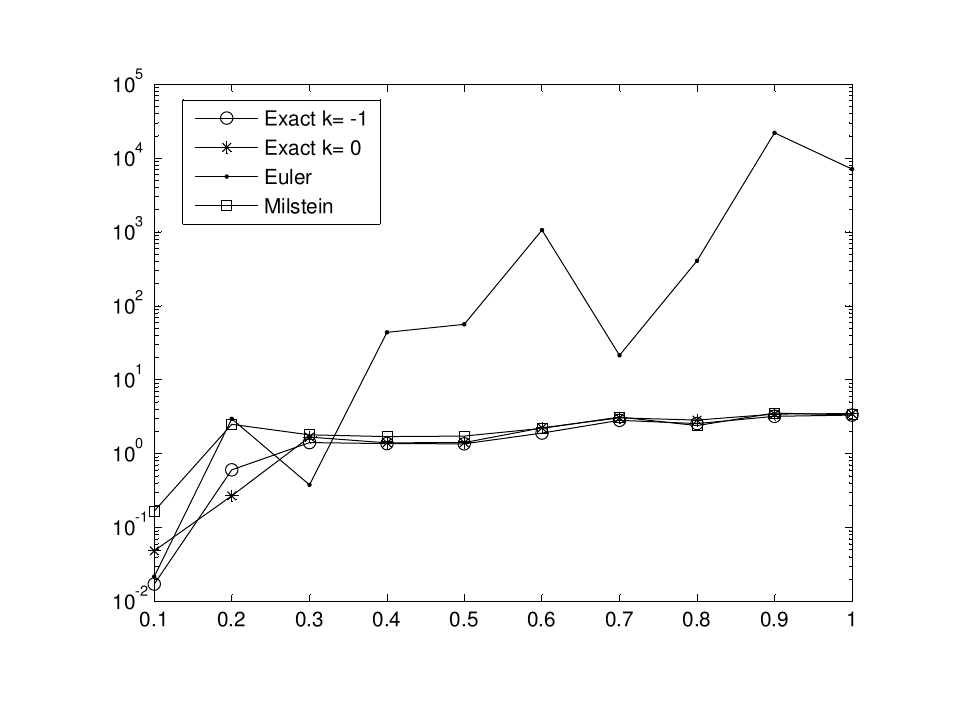}
    \end{minipage}%
    \hspace{50mm}%
    \begin{minipage}[c]{.1\textwidth}
       \centering
        \includegraphics[height=8cm, width=5.33cm, angle=90]{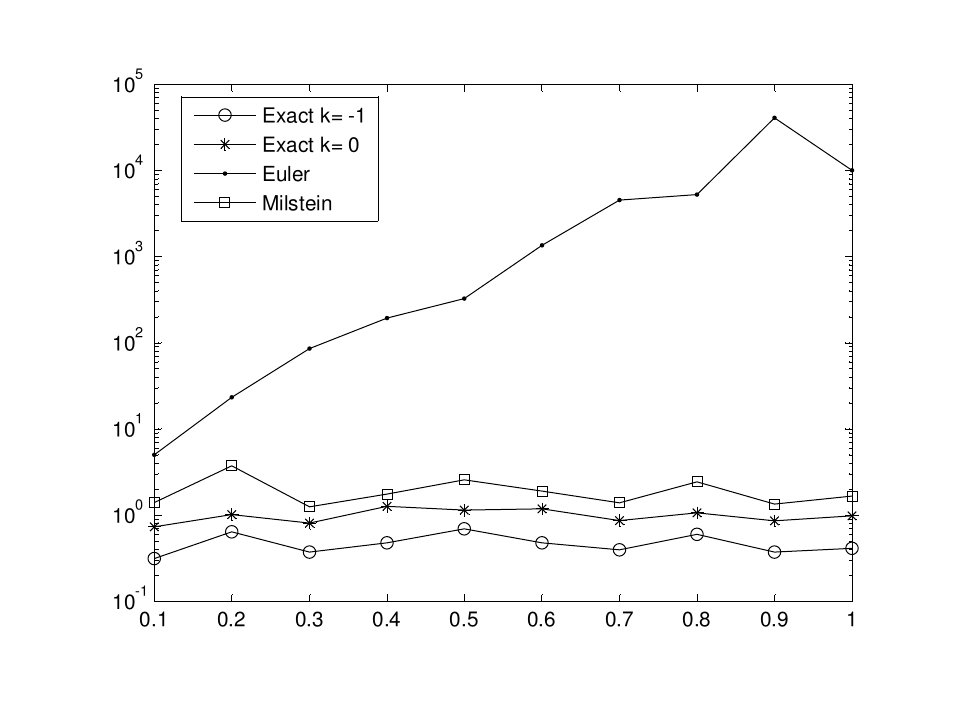}
    \end{minipage}
            \caption{Strong and weak errors with $t \in [0.1,1]$ and stepsize $h=0.01$ \label{figure2}}
\end{figure}

In Figure \ref{figure2} we report the weak and strong errors with respect to the maximum time of integration $t$, which varies from $0.1$ to $1$, and stepsize $h=0.01$. In this situation also the errors of the Mistein method remain bounded. In other words $h =0.01$ belongs to the stability region of the Milstein method but not to the stability region of the Euler method.

\begin{figure}[!ht]
   \begin{minipage}[c]{.1\textwidth}
       \centering
            \includegraphics[height=8cm, width=5.33cm, angle=90]{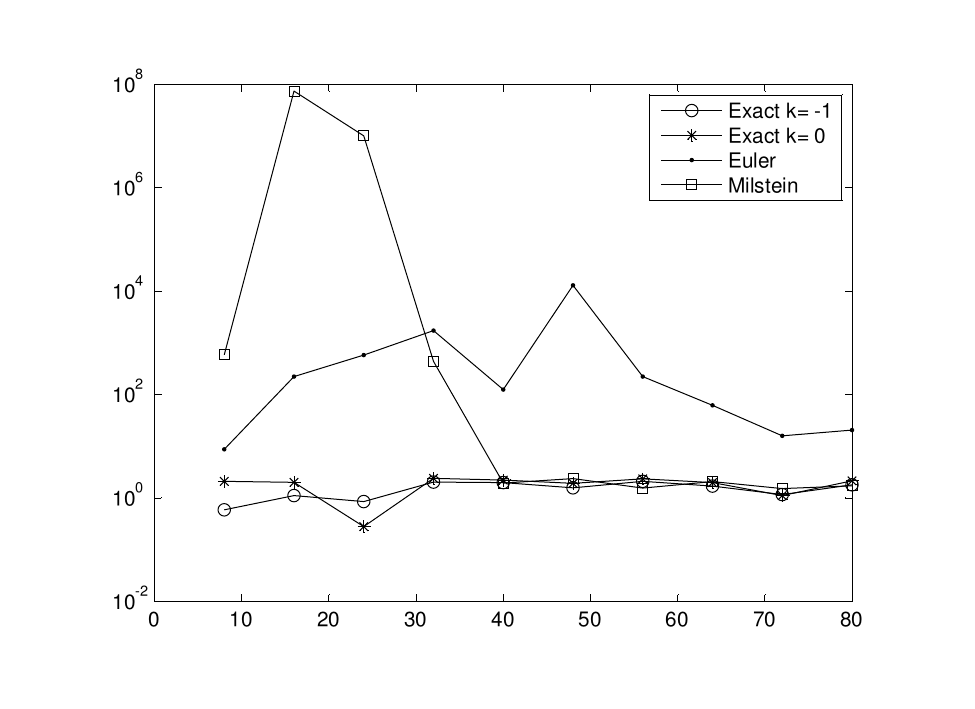}
    \end{minipage}%
    \hspace{50mm}%
    \begin{minipage}[c]{.1\textwidth}
       \centering
        \includegraphics[height=8cm, width=5.33cm, angle=90]{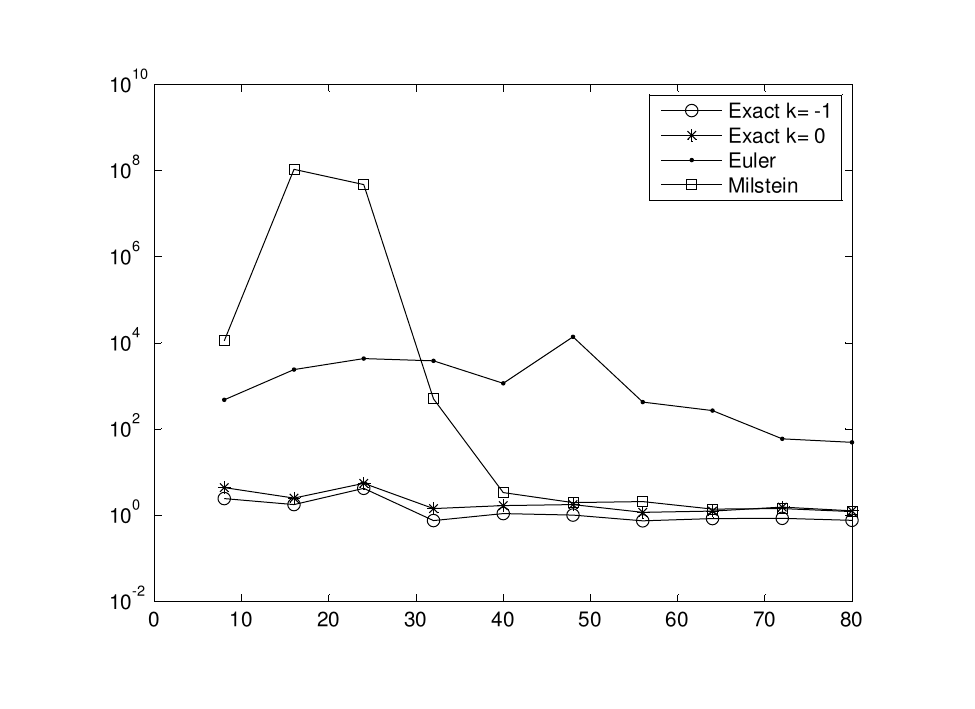}
    \end{minipage}
            \caption{Strong and weak errors with $t=0.5$ and step number $N=[10,80]$ \label{figure3}}
\end{figure}

In Figure \ref{figure3} we plot the weak and strong errors with fixed final time $t=0.5$ and steps number $N=10,...,80$, where the stepsize $h=\frac{t}{N}$. Here we note that the weak and strong errors for the exact methods do not change with the stepsize. This means that with a stepsize of only $h=0.05$ the exact methods have  weak and strong systematic errors less than the statistical errors. Instead for the Milstein scheme the errors grow and only with a stepsize equal to $h=0.0125$ the systematic errors are comparable with the statistical ones. Equivalently we can say that the stability region is $[0,0.0125]$. In the Euler case the systematic error is not  comparable with the statistical one.\\

\begin{figure}[!ht]
      \centering
        \includegraphics[height=8cm, width=5.33cm, angle=90]{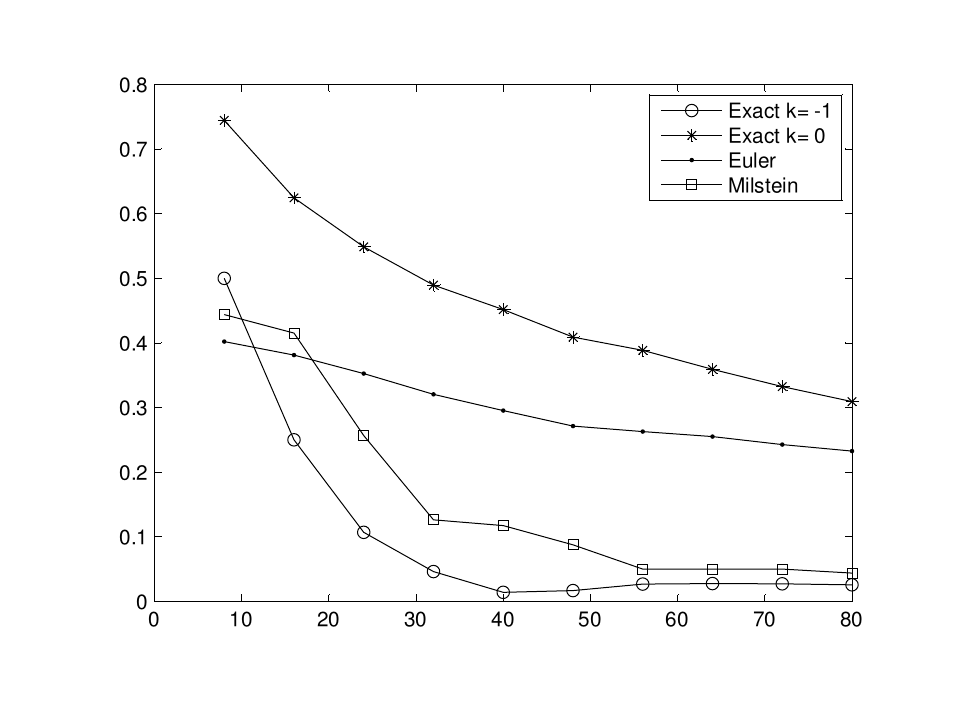}
            \caption{Total variation distance with $t=0.5$ and $h \in [10,80]$ \label{figure4}}
\end{figure}

In Figure \ref{figure4} we report the total variation distance between the empirical probabilities of $X_t$ and  of $X^N_t$ obtained simulating $1000000$  paths. We note that there is a big difference between the exact method for $k=0$ and for $k=-1$. The discrepancy is due to  the fact that the exact method with $k=0$ tends to overestimate the points with probability less then $\frac{-d}{c}$ more than the Euler scheme does.\\
Now we simulate the two dimensional linear SDE analized in Section \ref{section_linear} by

 choosing $\alpha=-20\mbox{, }\beta=-0.5\mbox{, }\sigma=\sigma'=5\mbox{, }\textbf{c}=\textbf{e}=\begin{pmatrix}
0.1 & 0.1
\end{pmatrix}^T$ and $\textbf{d}=\begin{pmatrix}

1 & 1
\end{pmatrix}^T$. Our choice of the parameters guarantees the existence of an equilibrium probability density.\newline
We compare approximated solutions obtained with the Euler method and with our exact method using $h=0.01$. To this end we calculated both the strong and weak componentwise error
\begin{align}
E^w_i&=|\mathbb{E}[X^i_t-X^{i,N}_t]|\\
E^w_i&=\mathbb{E}[|X^i_t-X^{i,N}_t|]
\end{align}
where $X^i_t$ is the $i-$th component of the solution. This time our {\it true} solution is calculated using the Euler method with timestep $h=0.0001$. As in the previous example the error are estimated using a Montecarlo simulation, this time with $10000$ paths, both for the approximated and the {\it true} solution. Again we expect $E^w_i$ and $E^s_i$ to include both systematic and statistical errors.\newline

\begin{figure}[!ht]
   \begin{minipage}[c]{.1\textwidth}
       \centering
            \includegraphics[height=7cm, width=4.66cm, angle=270]{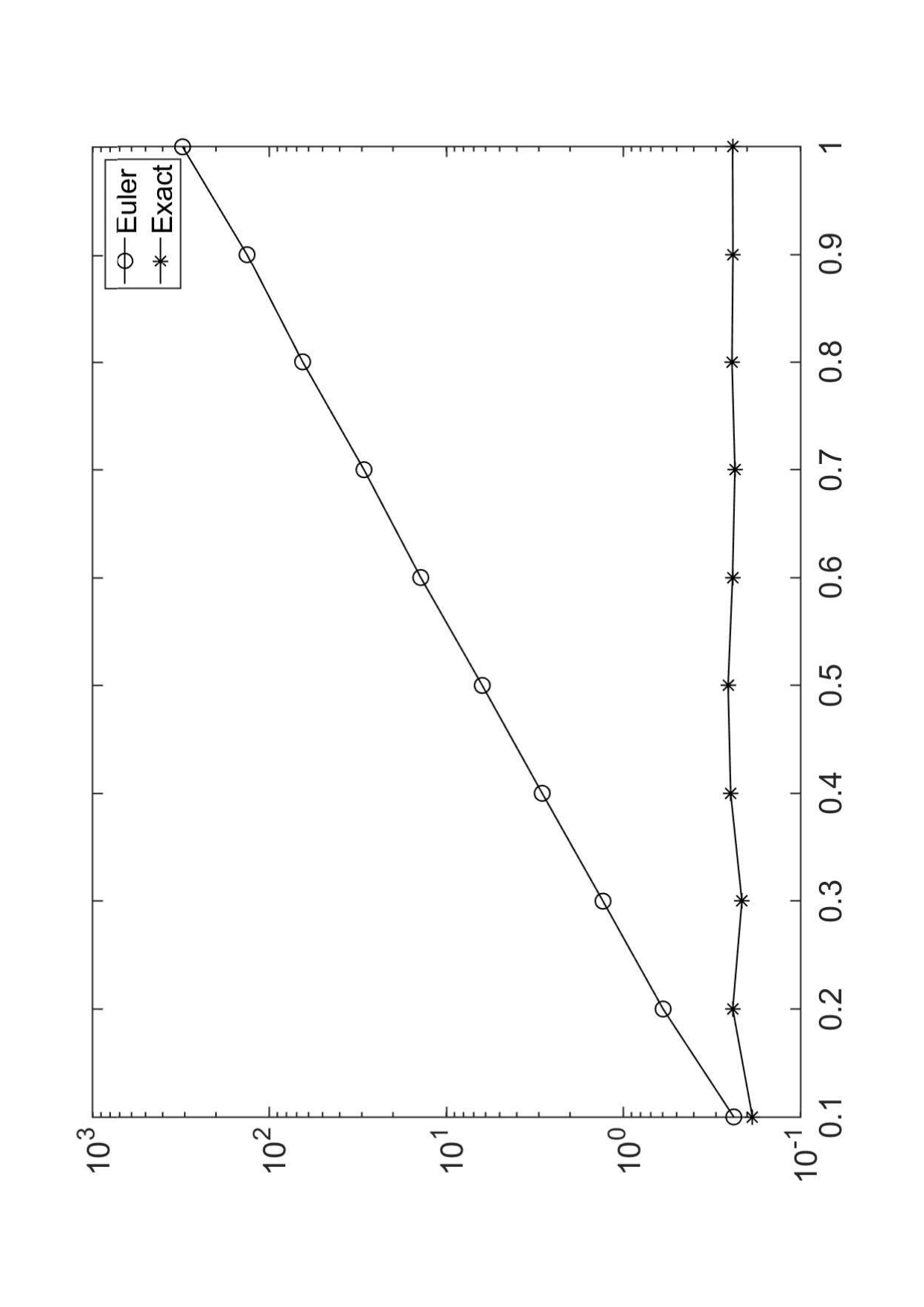}
    \end{minipage}%
    \hspace{50mm}%
    \begin{minipage}[c]{.1\textwidth}
       \centering
        \includegraphics[height=7cm, width=4.66cm, angle=270]{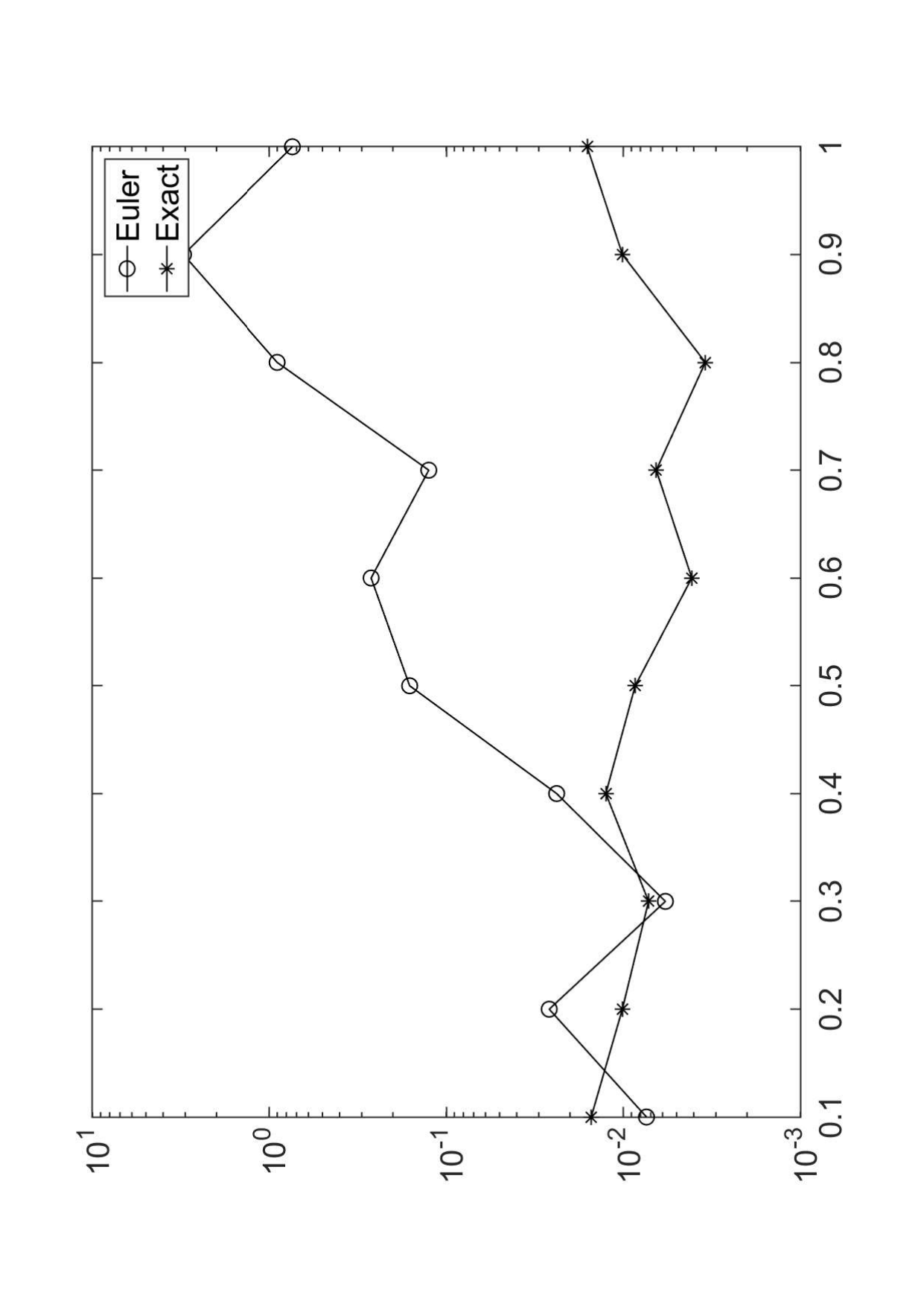}
    \end{minipage}
        \caption{$X_t$ strong and weak errors with $t \in [0.1,1]$ and stepsize $h=0.025$ \label{figure5}}
\end{figure}

\begin{figure}[!ht]
   \begin{minipage}[c]{.1\textwidth}
       \centering
            \includegraphics[height=7cm, width=4.66cm, angle=270]{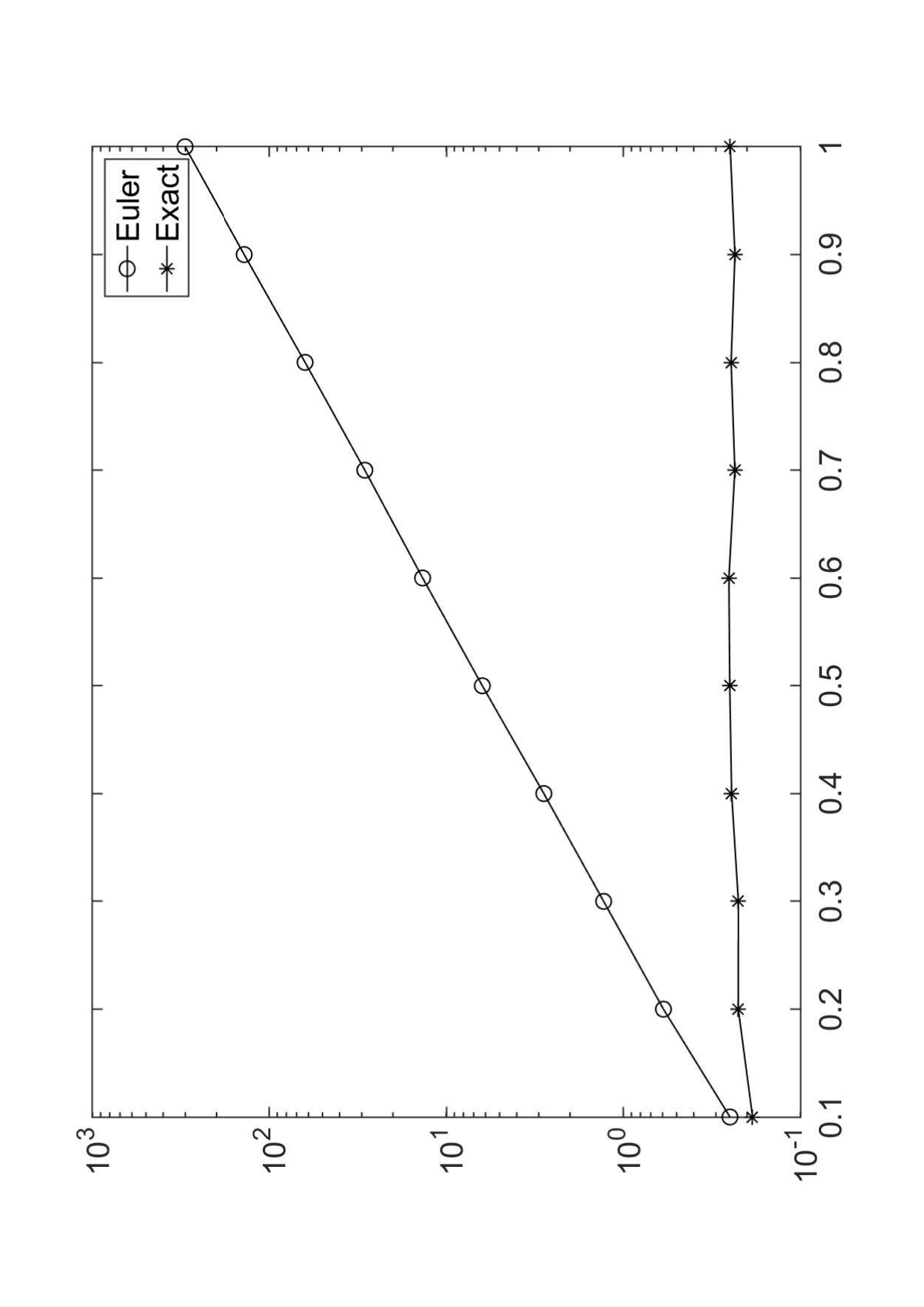}
    \end{minipage}%
    \hspace{50mm}%
    \begin{minipage}[c]{.1\textwidth}
       \centering
        \includegraphics[height=7cm, width=4.66cm, angle=270]{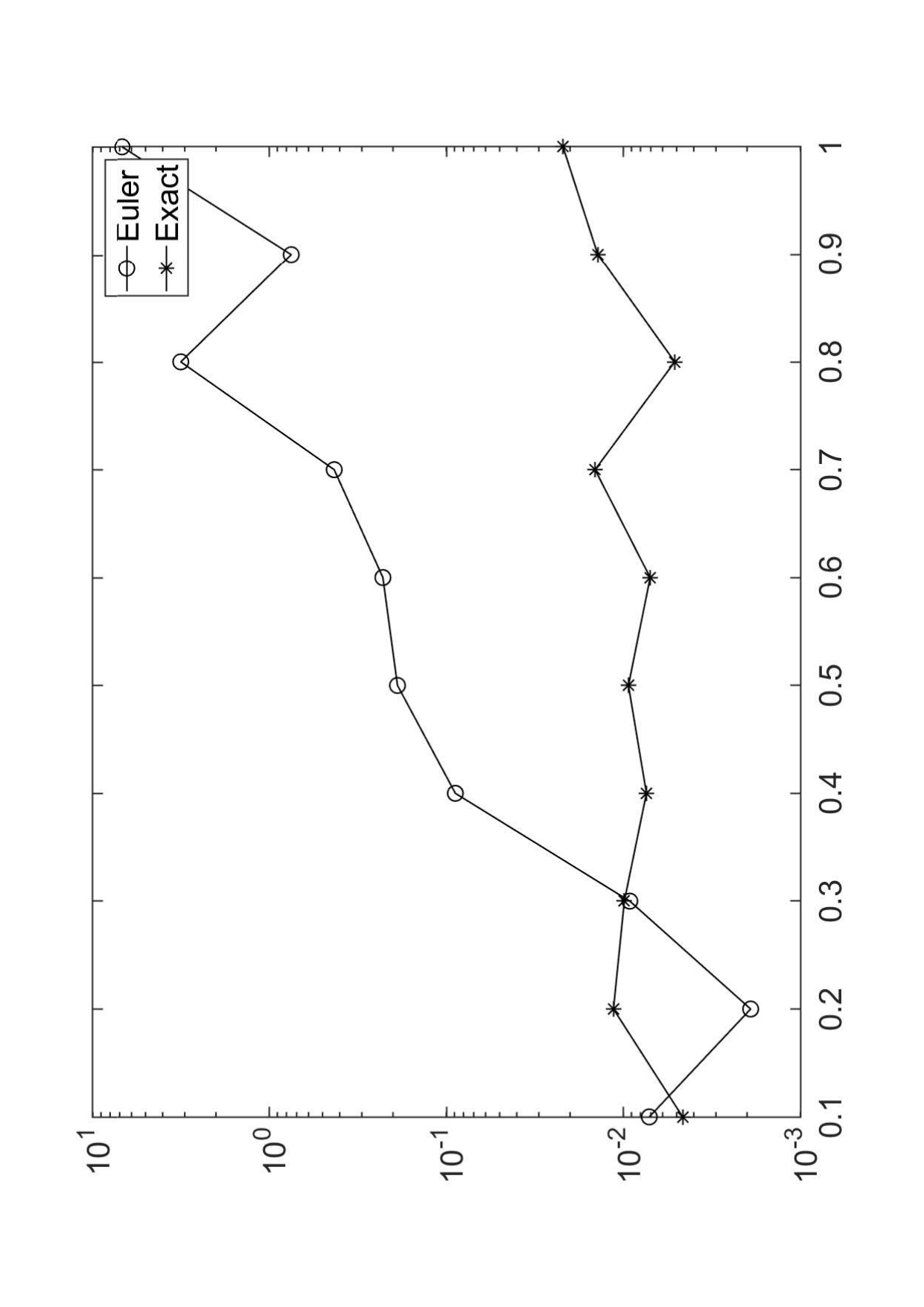}
    \end{minipage}
        \caption{$Y_t$ strong and weak errors with $t \in [0.1,1]$ and stepsize $h=0.025$ \label{figure6}}
\end{figure}

\begin{figure}[!ht]
   \begin{minipage}[c]{.1\textwidth}
       \centering
            \includegraphics[height=7cm, width=4.66cm, angle=270]{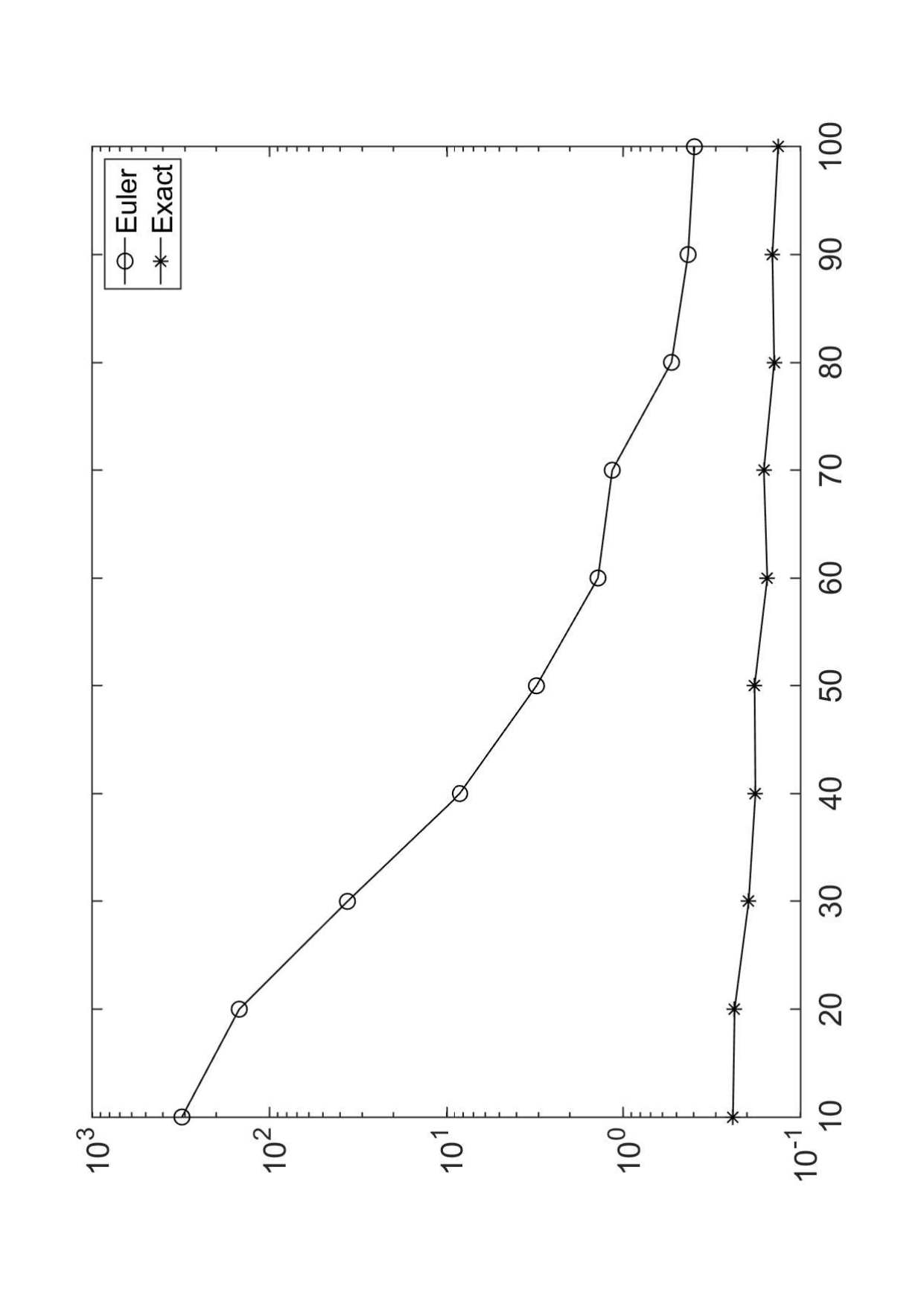}
    \end{minipage}%
    \hspace{50mm}%
    \begin{minipage}[c]{.1\textwidth}
       \centering
        \includegraphics[height=7cm, width=4.66cm, angle=270]{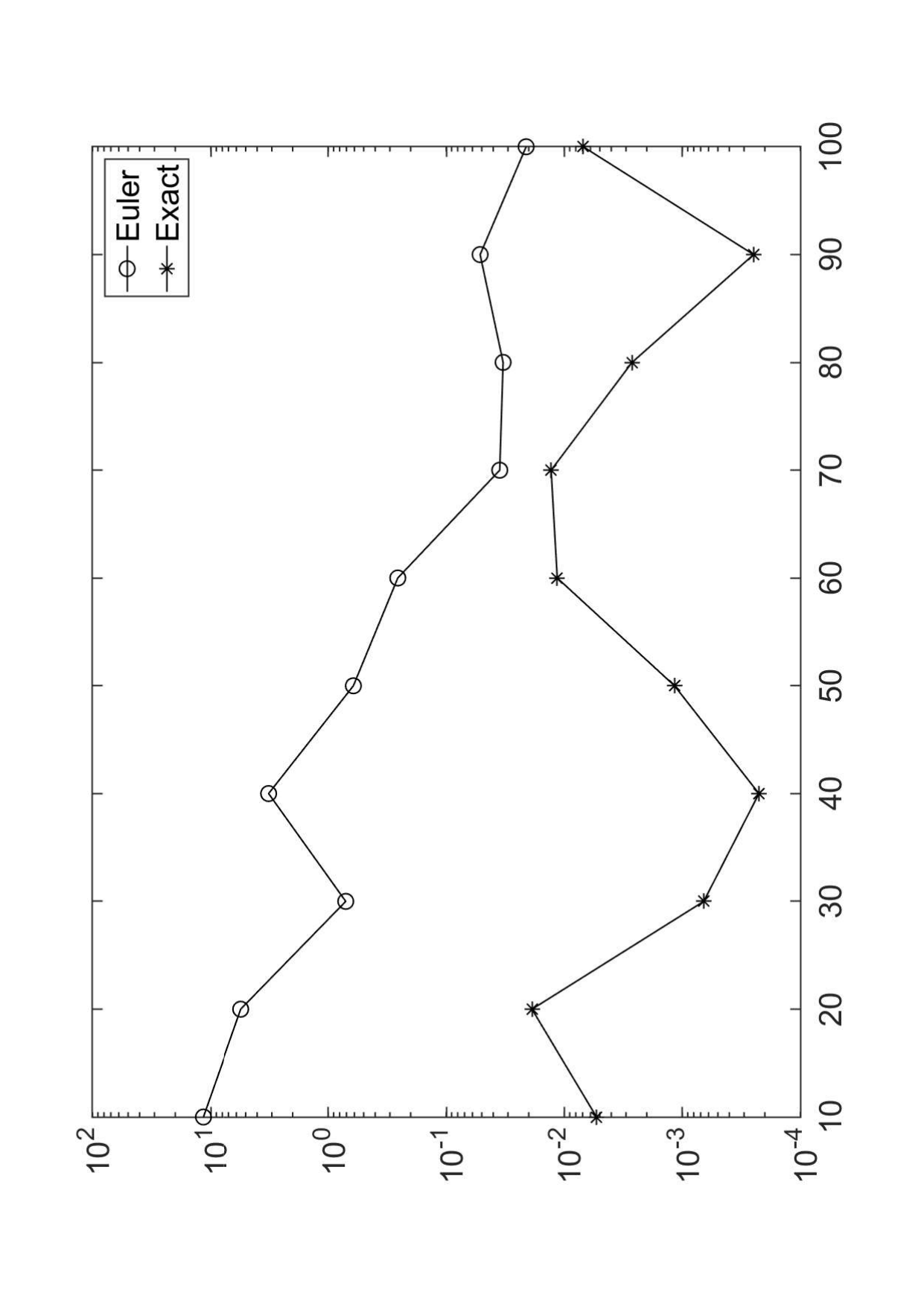}
    \end{minipage}
        \caption{$X_t$ strong and weak errors with $T=1$ and step number $N=[10,100]$ \label{figure7}}
\end{figure}

\begin{figure}[!ht]
   \begin{minipage}[c]{.1\textwidth}
       \centering
            \includegraphics[height=7cm, width=4.66cm, angle=270]{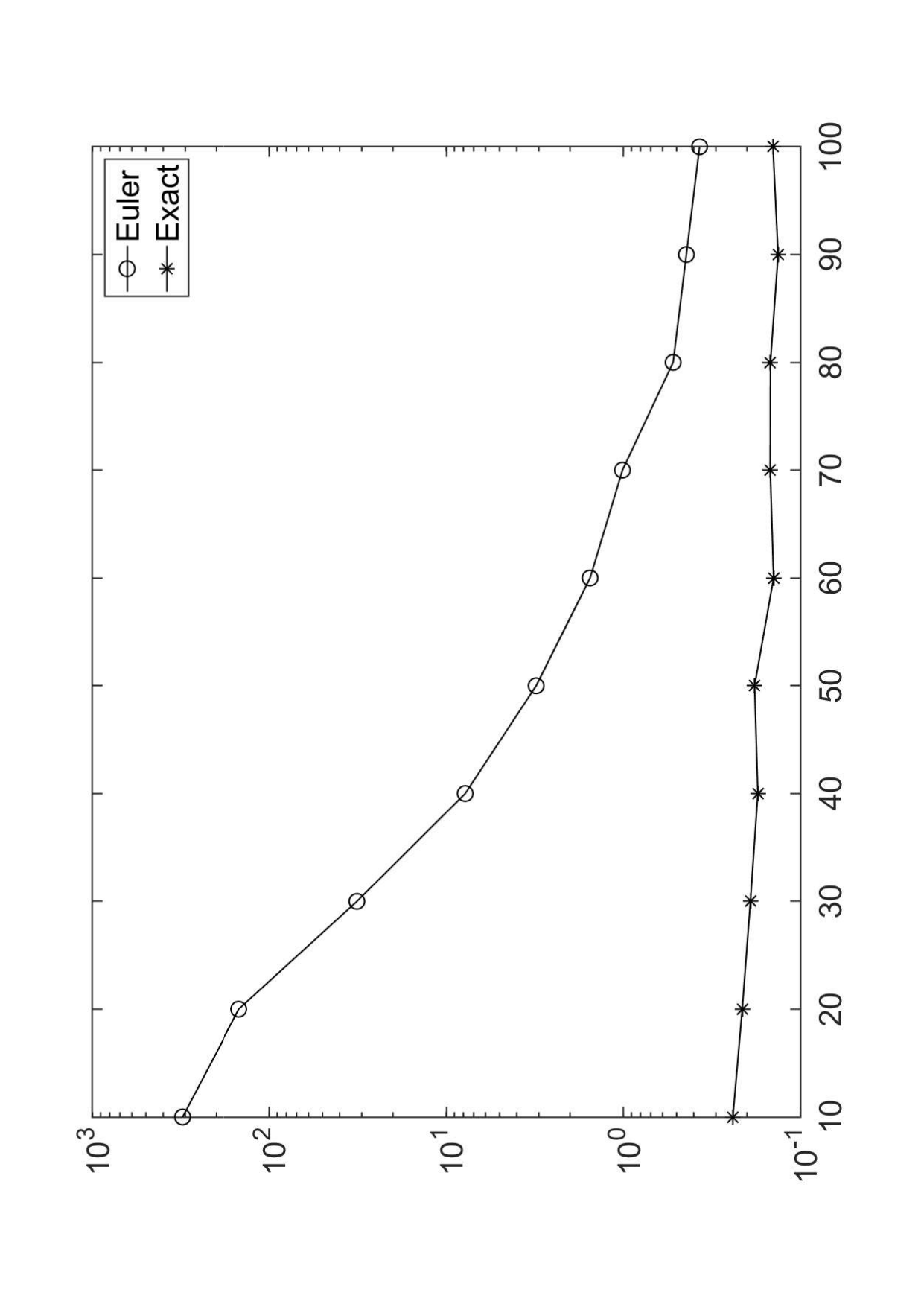}
    \end{minipage}%
    \hspace{50mm}%
    \begin{minipage}[c]{.1\textwidth}
       \centering
        \includegraphics[height=7cm, width=4.66cm, angle=270]{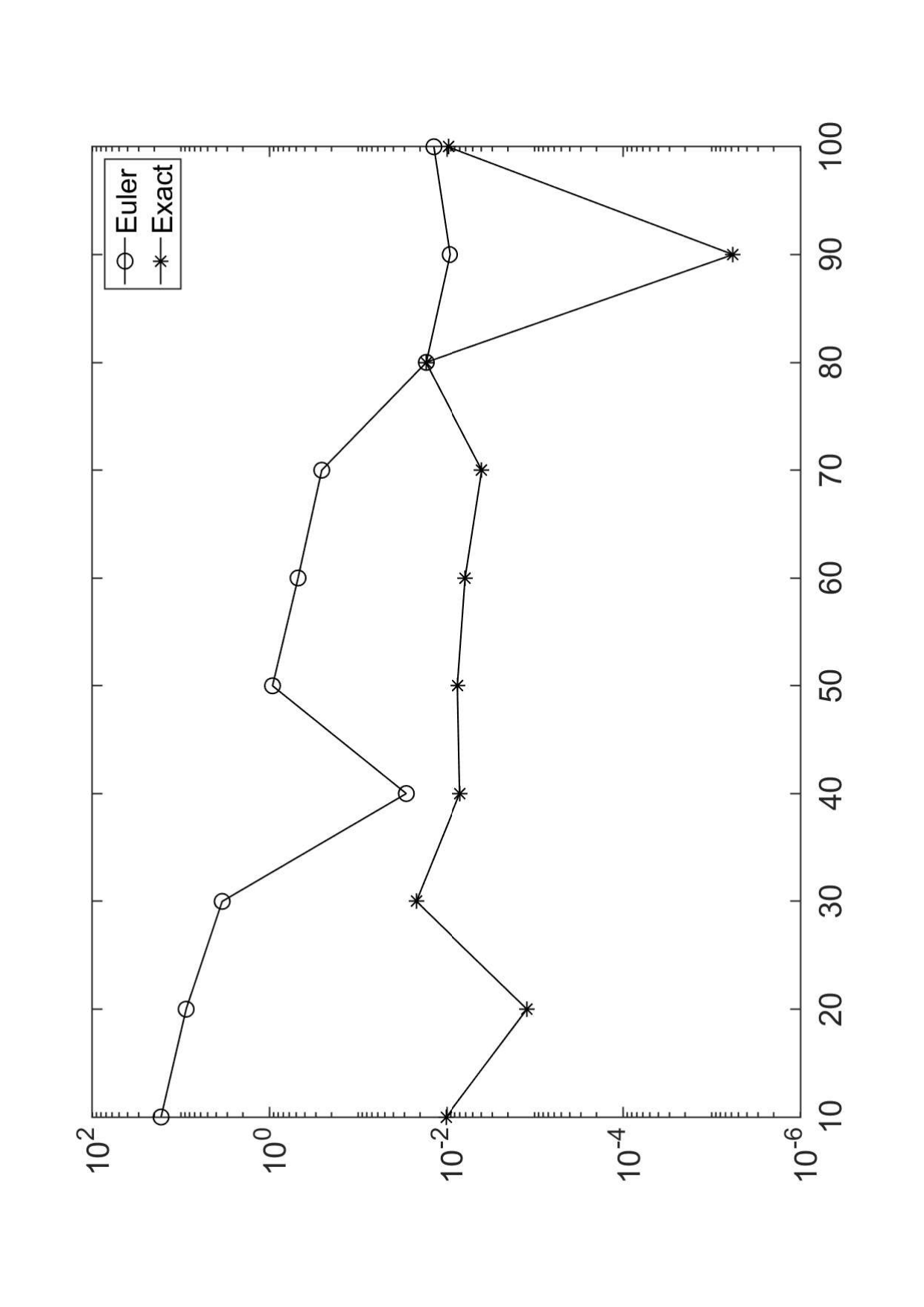}
    \end{minipage}
        \caption{$Y_t$ strong and weak errors with $T=1$ and step number $N=[10,100]$ \label{figure8}}
\end{figure}

In Figure \ref{figure5} and Figure \ref{figure6} we compare the strong and weak error of both components of the simulated solutions with respect to the maximum time of integration varying from $0.1$ to $1$. As can be seen the error from our new method is bounded at all times while the Euler method errors show an exponential growth with respect to the maximum time.\newline

In Figure \ref{figure7} and Figure \ref{figure8} we compare the errors of both approximations for solutions with $T=1$ and timestep size varying between  $0.1$ to $0.01$. As in the previous one-dimensional case we can see how the new exact method gives a good approximation of the {\it true} solution even with large timesteps, while the Euler method fails to achieve the same magnitude of error even using a significative smaller timesteps.

\section{Appendix}

In the proof of Theorem \ref{theorem_1},  by using Lemma \ref{lemma_exponential} and the independence of Brownian increments,  we can estimate the errors in a very explicitely way. In particular without exploiting Lemma \ref{lemma_estimate}. We show main steps and final expressions.\\

\noindent From \eqref{expectation1} we obtain that
$$\int_{t_{i-1}}^{t_i}\mathbb{E}[(\Psi_{t,T})^2]\mathbb{E}[(1-\Psi_{t_{i-1},t})^2]dt=:M_1(h)$$
with
\begin{eqnarray*}
M_1(h)&=&\frac{-a-c^2+h\exp{((2a+c^2)h)}(c^4+3ac^2+2a^2)+(c^2+3a)\exp{((2a+c^2)h)}}{c^4+3ac^2+2a^2}+\\
\\&&+\frac{(2c^2+4a)\exp{(ah)}}{c^4+3ac^2+2a^2}
\end{eqnarray*}
Since $M_1(0)=\partial_hM_1(0)=0$, then $|M_1(h)|\leq M_2(h)h^2$ with $M_2(h):=max_{k\in[0,h]}|\partial_h^2M_1(k)|,$
and, finally,
$$ \|I_1-I^N_1\|_2  \leq |b-cd|h^{1/2}\sqrt{M_2(h)}G_1(T)$$
where $G_1(T)$ is given by \eqref{G_1}, according with \eqref{first_errorestimate}.\\
From \eqref{first_term} we obtain 
\begin{eqnarray*}
\|\tilde{I}_2-\tilde{I}^N_2\|^2_2&=&
(d)^2\sum_{i=1}^N\int_{t_{i-1}}^{t_i} \mathbb{E}[(\Psi_{t_i,T})^2] \mathbb{E}\left[(\Psi_{t,t_i})^2+1-2\Psi_{t,t_i}\right]\\
&=&(d)^2\sum_{i=1}^N\exp{((2a+c^2)(T-t_i))}M_3(h)
\end{eqnarray*}
where
$$ M_3(h)=\frac{3a+2c^2+a\exp{(2a+c^2)}+h(2a+ac^2)-(4a+2c^2)\exp{(ah)}}{2a^2+ac^2}$$
Since $M_3(0)=\partial_hM_3(0)=0$, we have that $|M_3(h)|\leq M_4(h)h^2$ with  $M_4(h):=max_{k\in[0,h]}|\partial_h^2M_3(k)|,$ and
$$ \|\tilde{I}_2-\tilde{I}^N_2\|^2\leq (d)\sqrt{G_2(T)M_4(h)} h^{1/2},$$
according with \eqref{second_errorestimate}.\\
The second term on the right-hand side of \eqref{error_decomposition} becomes
\begin{eqnarray*}
\left\|\tilde{I}^N_2-I^N_2+cd \int_0^T{\Psi_{t,T}dt}
\right\|_2^2&=&d^2\mathbb{E}\left[\left(\sum_{i=1}^N\Psi_{t_i,T}(1-\Psi_{t_{i-1},t_i})(W_{t_i}-W_{t_{i-1}})\right.\right.\\
&&\left.\left.+\sum_{i=1}^N\Psi_{t_i,T}c\int_{t_{i-1}}^{t_i}\Psi_{t,t_i}dt\right)^{2}\right]\\
&=&d^2\left[\sum_{i=1}^N\mathbb{E}[(\Psi_{t_i,T})^2]\mathbb{E}[(K_i+H_i)^2]+\right.\\
&&\left.+2\sum_{i < j}\mathbb{E}[(\Psi_{t_j,T})^2]\mathbb{E}[\Psi_{t_{j-1},t_j}(H_j+K_j)]\mathbb{E}[\Psi_{t_i,t_{j-1}}]\mathbb{E}[(H_i+K_i)]\right]
\end{eqnarray*}
where we have used independence and we have set
$$K_i=(1-\Psi_{t_{i-1},t_i})(W_{t_i}-W_{t_{i-1}}),\quad H_i=c\int_{t_{i-1}}^{t_i}\Psi_{t,t_i}dt$$
We can obtain
\begin{eqnarray*}
M_5(h)&:=&\mathbb{E}[(H_i+K_i)^2]=\exp{(2a+c^2)}(4c^2h^2+h)-2\exp{(ah)}(c^2h^2 +h)+h\\
&&+\frac{c^2(1-\exp{((2a+c^2)h)}}{a(c^2+2a)}+\frac{c^2(\exp{((2a+c^2)h)}-\exp{(ah)}}{a(a+c^2)}\\
&&+2 \left[ -\frac{c^2[(ah-1)\exp{(ah)}+1]}{a^2}+\frac{2c^2[\exp{((2a+c^2)h)}(h(a+c^2)-1)+\exp{(ah)}]}{(a+c^2)^2}\right.\\
&&\left.+\frac{c^2[\exp{((2a+c^2)h)}-\exp{(ah)}(1+h(a+c^2))]}{(a+c^2)^2} \right]
\end{eqnarray*}
and, since $M_5(0)=\partial_h M_5(0)=0$, that $|M_5(h)| \leq M_6(h)h^2$, where $M_6(h):=max_{k\in[0,h]}|\partial_h^2 M_5(k)|.$
Being:
\begin{eqnarray*}
M_7(h)&:=&\mathbb{E}[\Psi_{t_{j-1},t_j}(H_j+K_j)]\\
&=&\frac{c\exp{((2a+c^2)h)}-c\exp{(ah)}+ch(a+c^2)\exp{(ah)}-2ch\exp{((2a+c^2)h)}(a+c^2)}{(a+c^2)}\\
\mathbb{E}[\Psi_{t_i,t_{j-1}}]&=&\exp{(a(t_{j-1}-t_i))}\\
M_8(h)&:=&\mathbb{E}[H_i+K_i]=-ch\exp{(ah)}+\frac{c(\exp{(ah)}-1)}{a},
\end{eqnarray*}
by putting $M_9(h)=M_7(h)M_8(h)$, one can easily verify that $$M_9(0)=\partial_hM_9(0)=\partial^2_hM_9(0)=\partial^3_hM_9(0)=0$$
(because $M_7(0)=\partial_hM_7(0)=M_8(0)=\partial_hM_8(0)=0$) and, therefore,
 $|M_9(h)|\leq M_{10}(h)h^4$, where $M_{10}(h):=max_{k\in[0,h]}|\partial_h^4M_9(k)|.$
Finally
\begin{eqnarray*}
\left\|\tilde{I}^N_2-I^N_2+cd \int_0^T{\Psi_{t,T}dt}
\right\|_2^2  &\leq & d^2\left[\sum_{i=1}^N\exp{((2a+c^2)(T-t_i))}M_6(h)h^2+\right.\\
&&\left.+ 2\sum_{i<j}\exp{((2a+c^2)(T-t_j))}\exp{(a(t_{j-1}-t_i))}M_9(h)\right]\\
&\leq& d^2\left[ G_2(T)M_6(h)h+2M_{10}(h)[ \sum_{i}\exp{((2a+c^2)(T-t_{i+1}))}h^4+\right.\\
&&\left.+\sum_{i< j+1}\exp{((2a+c^2)(T-t_{j}))}\exp{(a(t_{j-1}-t_i))}h^4 \right]
\end{eqnarray*}
that is
$$\left\|\tilde{I}^N_2-I^N_2+cd \int_0^T{\Psi_{t,T}dt}
\right\|_2^2
\leq d^2 \left[ G_2(T)M_6(h)h+2M_{10}(h)(G_2(T)h^3+\bar{G}(T)h^2)\right],
$$
with $$\bar{G}(T)=\int_0^T\int_0^t\exp{((2a+c^2)(T-t)+a(t-s))}dsdt,$$ 

\noindent from which we get:
$$
\left\|\tilde{I}^N_2-I^N_2+cd \int_0^T{\Psi_{t,T}dt}
\right\|_2\leq d\left[\sqrt{G_2(T)M_6(h)+2M_{10}(h)\bar{G}(T)}h^{1/2}+ \sqrt{2M_{10}(h)G_2(T)} h^{3/2}\right],
$$
to be compared with \eqref{third_errorestimate}.

\bibliographystyle{plain}
\bibliography{numerical3}

\end{document}